\def \Z {{\mathbb Z}}
\def \R {{\mathbb R}}
\def \N {{\mathbb N}}
\def \da {\downarrow}
\def \PP {{\mathbb P}}
\def \EE {{\mathbb E}}
\def \ES {{\rm E}}
\def \PS {{\rm P}}
\def \w {{\rm (w)}}
\def \b {{\rm (b)}}
\def \n {{(n)}}
\def \cF {{\mathcal F}}
\def \cI {{\mathcal I}}
\def \cB {{\mathcal B}}
\def \cC {{\mathcal C}}
\def\one{\rlap{\mbox{\small\rm 1}}\kern.15em 1}
\begin{document}

\title*{Quenched Lyapunov exponent for the\\ 
parabolic Anderson model in a\\ 
dynamic random environment}
\titlerunning{Quenched Lyapunov exponent for the PAM}
\author{J.\ G\"artner, F.\ den Hollander and G.\ Maillard}
\institute{J.\ G\"artner 
\at Institut f\"ur Mathematik, Technische Universit\"at Berlin,
Strasse des 17.\ Juni 136, D-10623 Berlin, Germany, \email{jg@math.tu-berlin.de}
\and 
F.\ den Hollander 
\at Mathematical Institute, Leiden University, P.O.\ Box 9512, 2300 RA Leiden, 
The Netherlands, \email{denholla@math.leidenuniv.nl}, and EURANDOM, P.O.\ Box 513, 
5600 MB Eindhoven, The Netherlands
\and 
G.\ Maillard \at CMI-LATP, Universit\'e de Provence, 39 rue F. Joliot-Curie, 
F-13453 Marseille Cedex 13, France, \email{maillard@cmi.univ-mrs.fr},
and EURANDOM, P.O.\ Box 513, 5600 MB Eindhoven, The Netherlands}
%
%
\maketitle


\abstract{We continue our study of the parabolic Anderson equation $\partial u/\partial t 
= \kappa\Delta u + \gamma\xi u$ for the space-time field $u\colon\,\Z^d\times [0,\infty)
\to\R$, where $\kappa \in [0,\infty)$ is the diffusion constant, $\Delta$ is the discrete 
Laplacian, $\gamma\in (0,\infty)$ is the coupling constant, and $\xi\colon\,\Z^d\times 
[0,\infty)\to\R$ is a space-time random environment that drives the equation. The solution 
of this equation describes the evolution of a ``reactant'' $u$ under the influence of 
a ``catalyst'' $\xi$, both living on $\Z^d$.
\newline\indent
In earlier work we considered three choices for $\xi$: independent simple random 
walks, the symmetric exclusion process, and the symmetric voter model, all in equilibrium 
at a given density. We analyzed the \emph{annealed} Lyapunov exponents, i.e., the 
exponential growth rates of the successive moments of $u$ w.r.t.\ $\xi$, and showed that 
these exponents display an interesting dependence on the diffusion constant $\kappa$, 
with qualitatively different behavior in different dimensions $d$. In the present paper 
we focus on the \emph{quenched} Lyapunov exponent, i.e., the exponential growth rate 
of $u$ conditional on $\xi$. 
\newline\indent
We first prove existence and derive qualitative properties of the quenched Lyapunov 
exponent for a general $\xi$ that is stationary and ergodic under translations in 
space and time and satisfies certain noisiness conditions. After that we focus on the 
three particular choices for $\xi$ mentioned above and derive some further properties. 
We close by formulating open problems.}


\section{Introduction}
\label{S1}

Section~\ref{S1.1} defines the parabolic Anderson model, Section~\ref{S1.2} introduces 
the quenched Lyapunov exponent, Section~\ref{S1.3} summarizes what is known in the 
literature, Section~\ref{S1.4} contains our main results, while Section~\ref{S1.5} 
provides a discussion of these results and lists open problems.


\subsection{Parabolic Anderson model}
\label{S1.1}

The parabolic Anderson model (PAM) is the partial differential equation
\begin{equation}
\label{pA}
\frac{\partial}{\partial t}u(x,t) = \kappa\Delta u(x,t) + [\gamma\xi(x,t)- \delta]u(x,t),
\qquad x\in\Z^d,\,t\geq 0.
\end{equation}
Here, the $u$-field is $\R$-valued, $\kappa\in [0,\infty)$ is the diffusion
constant, $\Delta$ is the discrete Laplacian acting on $u$ as
\begin{equation}
\label{dL}
\Delta u(x,t) = \sum_{{y\in\Z^d} \atop {\|y-x\|=1}} [u(y,t)-u(x,t)]
\end{equation}
($\|\cdot\|$ is the Euclidian norm), $\gamma\in [0,\infty)$ is the coupling constant, 
$\delta\in[0,\infty)$ is the killing constant, while
\begin{equation}
\label{rf}
\xi = (\xi_t)_{t \geq 0} \mbox{ with } \xi_t = \{\xi(x,t) \colon\,x\in\Z^d\}
\end{equation}
is an $\R$-valued random field that evolves with time and that drives the equation. The 
$\xi$-field provides a dynamic random environment defined on a probability space 
$(\Omega,\cF,\PP)$. As initial condition for (\ref{pA}) we take 
\begin{equation}
\label{ic}
u(x,0) = \delta_{0}(x), \qquad x\in\Z^d.
\end{equation}

One interpretation of (\ref{pA}) and (\ref{ic}) comes from population dynamics. Consider 
a system of two types of particles, $A$ (catalyst) and $B$ (reactant), subject to:
\begin{itemize}
\item{$A$-particles evolve autonomously according to a prescribed dynamics with $\xi(x,t)$ 
denoting the number of $A$-particles at site $x$ at time $t$;}
\item{$B$-particles perform independent random walks at rate $2d\kappa$ and split 
into two at a rate that is equal to $\gamma$ times the number of $A$-particles present 
at the same location;}
\item{$B$-particles die at rate $\delta$;}
\item{the initial configuration of $B$-particles is one particle at site $0$ and no 
particle elsewhere.}
\end{itemize}
Then
\begin{equation}
\label{uint}
\begin{array}{lll}
u(x,t) &=& \hbox{the average number of $B$-particles at site $x$ at time $t$}\\
       && \hbox{conditioned on the evolution of the $A$-particles}.
\end{array}
\end{equation}
It is possible to remove $\delta$ via the trivial transformation $u(x,t) \to u(x,t)
e^{-\delta t}$. In what follows we will therefore put $\delta=0$.

Throughout the paper, $\PP$ denotes the law of $\xi$  and we assume that 
\begin{equation}
\label{staterg}
\begin{aligned}
&\bullet\quad \xi \mbox{ is \emph{stationary} and \emph{ergodic} under translations 
in space and time,}\\
&\quad\,\,\,\,\,\,\,\xi \mbox{ is \emph{not constant } and } \rho=\EE(\xi(0,0))\in\R,
\end{aligned}
\end{equation}
and 
\begin{equation}
\label{lambdabd}
\bullet\quad \forall\,\kappa,\gamma\in [0,\infty)\,\,
\exists\,c=c(\kappa,\gamma)<\infty\colon\,
\EE(\log u(0,t)) \leq ct \,\,\forall\,t\geq 0.
\end{equation}
Three choices of $\xi$ will receive special attention:
\begin{description}[(1)]
\item[(1)]{
\emph{Independent Simple Random Walks} (ISRW) [Kipnis and Landim~\cite{kiplan99}, 
Chapter 1]. Here, $\xi_{t}\in \Omega = (\N\cup\{0\})^{\Z^d}$ and $\xi(x,t)$ represents 
the number of particles at site $x$ at time $t$. Under the ISRW-dynamics particles 
move around independently as simple random walks stepping at rate $1$. We draw 
$\xi_{0}$ according to the equilibrium $\nu_\rho$ with density $\rho\in (0,\infty)$,
which is a Poisson product measure.}
\item[(2)]{
\emph{Symmetric Exclusion Process} (SEP) [Liggett~\cite{lig85}, Chapter VIII].
Here, $\xi_{t}\in \Omega = \{0,1\}^{\mathbb{Z}^d}$ and $\xi(x,t)$ represents the 
presence ($\xi(x,t)=1$) or absence ($\xi(x,t)=0$) of a particle at site $x$ at time 
$t$. Under the SEP-dynamics particles move around independently according to an
irreducible symmetric random walk transition kernel at rate $1$, but subject to the 
restriction that no two particles can occupy the same site. We draw $\xi_{0}$ 
according to the equilibrium $\nu_\rho$ with density $\rho\in (0,1)$, which is 
a Bernoulli product measure.}
\item[(3)]{ 
\emph{Symmetric Voter Model} (SVM) [Liggett~\cite{lig85}, Chapter V]. 
Here, $\xi_{t}\in \Omega\ = \{0,1\}^{\mathbb{Z}^d}$ and $\xi(x,t)$ represents the
opinion of a voter at site $x$ at time $t$. Under the SVM-dynamics each voter 
imposes its opinion on another voter according to an irreducible symmetric 
random walk transition kernel at rate $1$. We draw $\xi_{0}$ according to the 
equilibrium distribution $\nu_{\rho}$ with density $\rho\in (0,1)$, which is 
not a product measure.}
\end{description}

\noindent
\emph{Note:} While ISRW and SEP are conservative and reversible in time, SVM is not.
The equilibrium properties of SVM are qualitatively different for recurrent and transient 
random walk. For \emph{recurrent} random walk all equilibria with $\rho\in(0,1)$ are 
non-ergodic, namely, $\nu_\rho=(1-\rho)\delta_{\{\eta\equiv 0\}}+\rho\delta_{\{\eta\equiv 1\}}$, 
and therefore are precluded by (\ref{staterg}). For \emph{transient} random walk, on 
the other hand, there are ergodic equilibria.


\subsection{Lyapunov exponents}
\label{S1.2}

Our focus will be on the \emph{quenched} Lyapunov exponent, defined by
\begin{equation}
\label{qLyapdef}
\lambda_0 = \lim_{t\to\infty} \frac{1}{t} \log u(0,t) \qquad \xi\text{-a.s.}
\end{equation}
We will be interested in comparing $\lambda_0$ with the \emph{annealed} Lyapunov exponents, 
defined by
\begin{equation}
\label{aLyapdef}
\lambda_p = \lim_{t\to\infty} \frac{1}{t} \log\EE\big([u(0,t)]^p\big)^{1/p},
\qquad p \in \N,
\end{equation}
which were analyzed in detail in our earlier work (see Section~\ref{S1.3}). In 
(\ref{qLyapdef}--\ref{aLyapdef}) we pick $x=0$ as the reference site to monitor
the growth of $u$. However, it is easy to show that the Lyapunov exponents are the 
same at other sites.  

By the Feynman-Kac formula, the solution of (\ref{pA}) reads
\begin{equation}
\label{fey-kac1}
u(x,t) = \ES_{\,x}\left(\exp\left[\gamma\int_0^t \xi\left(X^\kappa(s),t-s\right)\,ds\right]
u\big(X^\kappa(t),0\big)\right),
\end{equation}
where $X^\kappa=(X^\kappa(t))_{t \geq 0}$ is simple random walk on $\Z^d$ stepping at rate 
$2d\kappa$ and $\ES_{\,x}$ denotes expectation with respect to $X^\kappa$ given $X^\kappa(0)
=x$. In particular, for our choice in (\ref{ic}), for any $t>0$ we have
\begin{eqnarray}
\label{lyapdef1}
u(0,t)
&=& \ES_{\,0}
\bigg(\exp\bigg[\gamma\int_0^t \xi\big(X^\kappa(s),t-s\big)\,ds\bigg]
\delta_0\big(X^\kappa(t)\big)\bigg)\nonumber\\
&=& \ES_{\,0}
\bigg(\exp\bigg[\gamma\int_0^t \xi\big(X^\kappa(s),s\big)\,ds\bigg]
\delta_0\big(X^\kappa(t)\big)\bigg),
\end{eqnarray}
where in the last line we reverse time and use that $X^\kappa$ is a reversible 
dynamics. Therefore, we can define
\begin{equation}
\label{lyapdef2}
\Lambda_0(t) = \frac{1}{t} \log u(0,t)
=\frac{1}{t} \log \ES_{\,0}
\bigg(\exp\bigg[\gamma\int_0^t \xi\big(X^\kappa(s),s\big)\,ds\bigg]
u\big(X^\kappa(t),0\big)\bigg).
\end{equation}
If the last quantity $\xi$-a.s.\ admits a limit as $t\to\infty$, then
\begin{equation}
\label{lyapdef3}
\lambda_0 = \lim_{t\to\infty} \Lambda_0(t)
\qquad\xi\text{-a.s.},
\end{equation}
where the limit is expected to be $\xi$-a.s.\ constant. 

Clearly, $\lambda_0$ is a function of $d$, $\kappa$, $\gamma$ and the 
parameters controlling $\xi$. In what follows, our main focus will be 
on the dependence on $\kappa$, and therefore we will often write $\lambda_0(\kappa)$. 
Note that $p\mapsto\lambda_p(\kappa)$ is non-decreasing for $p\in\N\cup\{0\}$.

\medskip\noindent
\emph{Note:} Conditions (\ref{staterg}--\ref{lambdabd}) imply that the expectations
in (\ref{fey-kac1}--\ref{lyapdef2}) are strictly positive and finite for all $x\in\Z^d$ 
and $t\geq 0$, and that $\lambda_0<\infty$. Moreover, by Jensen's inequality applied 
to (\ref{lyapdef2}) with $u(\cdot,0)$ given by (\ref{ic}), we have $\EE(\Lambda_0(t))
\geq\rho\gamma+\frac{1}{t}\log \PS_0(X^\kappa(t)=0)$ and, since the last term tends 
to zero as $t\to\infty$, we find that $\lambda_0\geq\rho\gamma>-\infty$. 


\subsection{Literature}
\label{S1.3}

The behavior of the Lyapunov exponents for the PAM in a \emph{time-dependent} random 
environment has been the subject of several papers.


\subsubsection{White noise}
\label{S1.3.1}

Carmona and Molchanov~\cite{carmol94} obtained a qualitative description of both the 
\emph{quenched} and the \emph{annealed} Lyapunov exponents when $\xi$ is white noise, i.e.,
\begin{equation}
\label{whitenoisepot}
\xi(x,t) = \frac{\partial}{\partial t}\,W(x,t),
\end{equation}
where $W=(W_t)_{t \geq 0}$ with $W_t=\{W(x,t)\colon\,x\in\Z^d\}$ is a space-time field 
of independent Brownian motions. This choice is special because the increments of $\xi$ 
are \emph{independent in space and time}. They showed that if $u(\cdot,0)$ has compact 
support (e.g.\ $u(\cdot,0)=\delta_{0}(\cdot)$ as in (\ref{ic})), then the quenched Lyapunov 
exponent $\lambda_0(\kappa)$ defined in (\ref{qLyapdef}) exists and is constant $\xi$-a.s., 
and is independent of $u(\cdot,0)$. Moreover, they found that the asymptotics of 
$\lambda_0(\kappa)$ as $\kappa\downarrow 0$ is singular, namely, there are constants 
$C_1,C_2\in(0,\infty)$ and $\kappa_0\in(0,\infty)$ such that
\begin{equation}
\label{sing1}
C_1\,\frac{1}{\log(1/\kappa)} \leq \lambda_0(\kappa) 
\leq C_2\,\frac{\log\log(1/\kappa)}{\log(1/\kappa)}
\qquad \forall\,0<\kappa \leq \kappa_0.
\end{equation}
Subsequently, Carmona, Molchanov and Viens~\cite{carmolvi96}, Carmona, Koralov and 
Molcha\-nov~\cite{carkormol01}, and Cranston, Mountford and Shiga~\cite{cramoshi02},
proved the existence of $\lambda_0$ when $u(\cdot,0)$ has non-compact support (e.g.\
$u(\cdot,0)\equiv 1$), showed that there is a constant $C\in(0,\infty)$ such that
\begin{equation}
\label{sing2} 
\lim_{\kappa\da 0}\,\log(1/\kappa)\,\lambda_0(\kappa)=C,
\end{equation}
and proved that 
\begin{equation}
\label{plamlim}
\lim_{p \da 0}\,\lambda_p(\kappa) = \lambda_0(\kappa) \quad \forall\,\kappa\in [0,\infty).
\end{equation}
(These results were later extended to L\'evy white noise by Cranston, Mountford and 
Shiga~\cite{cramoshi05}, and to colored noise by Kim, Viens and Vizcarra~\cite{kimvieviz08}.)
Further refinements on the behavior of the Lyapunov exponents were conjectured in
Carmona and Molchanov~\cite{carmol94} and proved in Greven and den Hollander~\cite{grehol07}.
In particular, it was shown that $\lambda_1(\kappa)=\tfrac12$ for all $\kappa\in[0,\infty)$, 
while for the other Lyapunov exponents the following dichotomy holds (see 
Figs.~\ref{fig-lambda01}--\ref{fig-lambda02}):
\begin{itemize}
\item
$d=1,2$: $\lambda_0(\kappa)<\tfrac12$, $\lambda_p(\kappa)>\tfrac12$ 
for $p\in\N\backslash\{1\}$, for $\kappa\in[0,\infty)$;
\item 
$d \geq 3$: there exist $0<\kappa_0 \leq \kappa_2 \leq \kappa_3 \leq \ldots < \infty$ such that 
\begin{equation}
\label{Lyaref0}
\lambda_0(\kappa) - \tfrac12 \left\{
\begin{array}{ll}
< 0, &\mbox{for } \kappa \in [0,\kappa_0),\\
= 0, &\mbox{for } \kappa \in [\kappa_0,\infty),
\end{array}
\right.
\end{equation}
and
\begin{equation}
\label{Lyaref}
\lambda_p(\kappa) - \tfrac12 \left\{
\begin{array}{ll}
> 0, &\mbox{for } \kappa \in [0,\kappa_p),\\
= 0, &\mbox{for } \kappa \in [\kappa_p,\infty),
\end{array}
\right. \qquad p\in\N\backslash\{1\}. 
\end{equation}
\end{itemize}
Moreover, variational formulas for $\kappa_p$ were derived, which in turn led to upper and 
lower bounds on $\kappa_p$, and to the identification of the asymptotics of $\kappa_p$ for 
$p \to \infty$ ($\kappa_p$ grows linearly with $p$). In addition, it was shown that for
every $p\in\N\backslash\{1\}$ there exists a $d(p)<\infty$ such that $\kappa_p<\kappa_{p+1}$ 
for $d\geq d(p)$. Moreover, it was shown that $\kappa_0<\kappa_2$ in Birkner, Greven and 
den Hollander~\cite{birgrehol08} ($d\geq 5$), Birkner and Sun~\cite{birsun10} ($d=4$), 
Berger and Toninelli~\cite{berton09}, Birkner and Sun~\cite{birsun09} ($d=3$). Note that, 
by H\"older's inequality, all curves in Figs.~\ref{fig-lambda01}--\ref{fig-lambda02}
are distinct whenever they are different from $\tfrac12$.

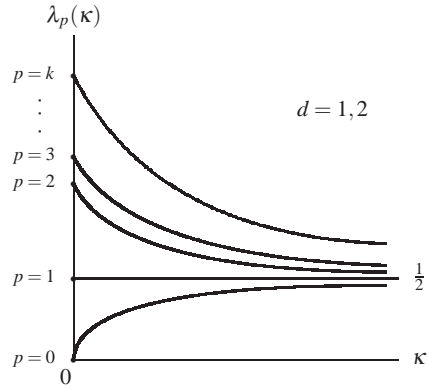
\begin{figure}[htbp]
\vspace{3cm}
\begin{center}
\setlength{\unitlength}{0.18cm}
\begin{picture}(-10,12)(32,-4)
\put(14,-1){\line(18,0){24}}
\put(14,-1){\line(0,24){24}}
{\thicklines
\put(14,5){\line(18,0){24}}
}
{\thicklines
\qbezier(14,-1)(14.2,4.6)(37,4.5)
}
{\thicklines
\qbezier(21,5)(26,5)(37,5)
}
{\thicklines
\qbezier(14,12)(17,5.6)(37,5.5)
}
{\thicklines
\qbezier(14,14)(18,6.6)(37,6)
}
{\thicklines
\qbezier[200](14,20)(20,8.2)(37,7.6)
}
\put(13,-2.8){$0$}
\put(39,-1.3){$\kappa$}
\put(39,4.7){$\frac12$}
\put(12,24){$\lambda_p(\kappa)$}
\put(9.5,-1.2){{\scriptsize$p=0$}}
\put(9.5,4.8){{\scriptsize$p=1$}}
\put(9.5,11.8){{\scriptsize$p=2$}}
\put(9.5,13.8){{\scriptsize$p=3$}}
\put(11.3,15.5){$\cdot$}
\put(11.3,16.8){$\cdot$}
\put(11.3,17.8){$\cdot$}
\put(9.5,19.8){{\scriptsize$p=k$}}
\put(14,-1){\circle*{.35}}
\put(14,5){\circle*{.35}}
\put(14,12){\circle*{.35}}
\put(14,14){\circle*{.35}}
\put(14,20){\circle*{.35}}
\put(30.5,17){{\small $d=1,2$}}
\end{picture}
\caption{\small Quenched and annealed Lyapunov exponents when $d=1,2$ for white noise.}
\label{fig-lambda01}
\end{center}
\end{figure}

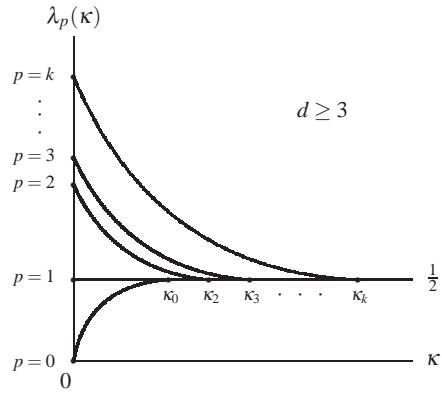
\begin{figure}[htbp]
\vspace{2.2cm}
\begin{center}
\setlength{\unitlength}{0.18cm}
\begin{picture}(60,12)(30,-4)
\put(47,-1){\line(18,0){25}}
\put(47,-1){\line(0,24){24}}
{\thicklines
\put(47,5){\line(18,0){25}}
}
{\thicklines
\qbezier(47,-1)(48,4.6)(54,5)
}
{\thicklines
\qbezier(54,5)(59,5)(71,5)
}
{\thicklines
\qbezier(47,12)(50,5.6)(57,5)
}
{\thicklines
\qbezier(47,14)(51,5.6)(60,5)
}
{\thicklines
\qbezier[200](47,20)(53,5.6)(68,5)
}
\put(46,-3){$0$}
\put(73,-1.3){$\kappa$}
\put(73,4.7){$\frac12$}
\put(45,24){$\lambda_p(\kappa)$}
\put(42.5,-1.4){{\scriptsize$p=0$}}
\put(42.5,4.8){{\scriptsize$p=1$}}
\put(42.5,11.8){{\scriptsize$p=2$}}
\put(42.5,13.8){{\scriptsize$p=3$}}
\put(44.3,15.5){$\cdot$}
\put(44.3,16.8){$\cdot$}
\put(44.3,17.8){$\cdot$}
\put(42.5,19.8){{\scriptsize$p=k$}}
\put(47,-1){\circle*{.35}}
\put(47,5){\circle*{.35}}
\put(47,12){\circle*{.35}}
\put(47,14){\circle*{.35}}
\put(47,20){\circle*{.35}}
\put(54,5){\circle*{.35}}
\put(57,5){\circle*{.35}}
\put(60,5){\circle*{.35}}
\put(68,5){\circle*{.35}}
\put(53.5,3.5){{\scriptsize$\kappa_{0}$}}
\put(56.5,3.5){{\scriptsize$\kappa_{2}$}}
\put(59.5,3.5){{\scriptsize$\kappa_{3}$}}
\put(62,3.5){$\cdot$}
\put(63.5,3.5){$\cdot$}
\put(65,3.5){$\cdot$}
\put(67.5,3.5){{\scriptsize$\kappa_{k}$}}
\put(63.5,17){{\small $d\geq 3$}}
\end{picture}
\caption{\small Quenched and annealed Lyapunov exponents when $d\geq 3$ for white noise.}
\label{fig-lambda02}
\end{center}
\end{figure}

\subsubsection{Interacting particle systems}
\label{S1.3.2}

Various models where $\xi$ is \emph{dependent in space and time} were looked at 
subsequently. Kesten and Sidoravicius~\cite{kessid03}, and G\"artner and den 
Hollander~\cite{garhol06}, considered the case where $\xi$ is a field of independent 
simple random walks in Poisson equilibrium (ISRW). The survival versus extinction 
pattern \cite{kessid03} and the annealed Lyapunov exponents \cite{garhol06} were 
analyzed, in particular, their dependence on $d$, $\kappa$, $\gamma$ and $\rho$. 
The case where $\xi$ is a single random walk was studied by G\"artner and 
Heydenreich~\cite{garhey06}. G\"artner, den Hollander and Maillard~\cite{garholmai07}, 
\cite{garholmai09}, \cite{garholmai10} subsequently considered the cases where $\xi$ 
is an exclusion process with an irreducible symmetric random walk transition kernel 
starting from a Bernoulli product measure (SEP), respectively, a voter model with 
an irreducible symmetric \emph{transient} random walk transition kernel starting either 
from a Bernoulli product measure or from equilibrium (SVM). In each of these cases, 
a fairly complete picture of the behavior of the annealed Lyapunov exponents was 
obtained, including the presence or absence of \emph{intermittency}, i.e., $\lambda_p
(\kappa)>\lambda_{p-1}(\kappa)$ for some or all values of $p\in\N\backslash\{1\}$ and 
$\kappa\in[0,\infty)$. Several conjectures were formulated as well. In what follows 
we describe these results in some more detail. We refer the reader to G\"artner, 
den Hollander and Maillard~\cite{garholmai08HvW} for an overview.

It was shown in G\"artner and den Hollander~\cite{garhol06}, and G\"artner, den Hollander 
and Maillard~\cite{garholmai07}, \cite{garholmai09}, \cite{garholmai10} that for ISRW, 
SEP and SVM in equilibrium the function $\kappa\mapsto\lambda_p(\kappa)$ satisfies: 
\begin{itemize}
\item 
If $d\geq 1$ and $p\in\N$, then the limit in (\ref{aLyapdef}) exists for all $\kappa\in
[0,\infty)$. Moreover, if $\lambda_p(0)<\infty$, then $\kappa\mapsto\lambda_p(\kappa)$
is finite, continuous, strictly decreasing and convex on $[0,\infty)$.
\item 
There are two regimes (we summarize results only for the case where the random walk 
transition kernel has finite second moment):
\begin{itemize}
\item
\emph{Strongly catalytic regime} (see Fig.~\ref{fig-lambda03}): 
\begin{itemize}
\item 
ISRW: $d=1,2$, $p\in\N$ or $d \geq 3$, $p \geq 1/\gamma G_d\colon \lambda_p\equiv \infty$ 
on $[0,\infty)$.\\ 
($G_d$ is the Green function at the origin of simple random walk.)
\item 
SEP: $d=1,2$, $p\in\N\colon \lambda_p\equiv \gamma$ on $[0,\infty)$.
\item 
SVM: $d=1,2,3,4$, $p\in\N\colon \lambda_p\equiv \gamma$ on $[0,\infty)$.
\end{itemize}
\item
\emph{Weakly catalytic regime} (see Fig.~\ref{fig-lambda04}--\ref{fig-lambda05}): 
\begin{itemize}
\item
ISRW: $d \geq 3$, $p<1/\gamma G_d\colon \rho\gamma<\lambda_p< \infty$ on $[0,\infty)$.
\item 
SEP: $d \geq 3$, $p\in\N\colon \rho\gamma<\lambda_p< \gamma$ on $[0,\infty)$.
\item 
SVM: $d \geq 5$,  $p\in\N\colon \rho\gamma<\lambda_p< \gamma$ on $[0,\infty)$.
\end{itemize}
\end{itemize}
\item 
For all three dynamics, in the weakly catalytic regime $\lim_{\kappa\to\infty}
\kappa[\lambda_p(\kappa)-\rho\gamma]=C_1+C_2p^21_{\{d=d_c\}}$ with $C_1,C_2 \in 
(0,\infty)$ and $d_c$ a critical dimension: $d_c=3$ for ISRW, SEP and $d_c=5$ 
for SVM. 
\item 
Intermittent behavior:
\begin{itemize}
\item
In the strongly catalytic regime, there is no intermittency for all three dynamics.
\item
In the weakly catalytic regime, there is full intermittency for:
\begin{itemize}
\item all three dynamics when $0\leq \kappa\ll 1$.
\item ISRW and SEP in $d=3$ when $\kappa\gg 1$.
\item SVM in $d=5$ when $\kappa\gg 1$.
\end{itemize}
\end{itemize}
\end{itemize}

\begin{figure}[htbp]
\vspace{0.7cm}
\begin{center}
\setlength{\unitlength}{0.20cm}
\begin{picture}(24,12)(0,0)
\put(0,0){\line(18,0){24}}
\put(0,0){\line(0,15){15}}
{\thicklines
\put(0,6){\line(15,0){17}}
\put(0,12){\line(15,0){17}}
}
\qbezier[60](0,2)(9,2)(24,2)
\put(-1,-1.2){$0$}
\put(-2.1,1.7){$\rho\gamma$}
\put(-1.8,5.7){$\gamma$}
\put(-2.1,11.7){$\infty$}
\put(0,6){\circle*{.35}}
\put(0,12){\circle*{.35}}
\put(18,5.7){{\small SEP, SVM}}
\put(18,11.7){{\small ISRW}}
\put(25,-0.3){$\kappa$}
\put(-1.5,16){$\lambda_p(\kappa)$}
\end{picture}
\vspace{.5cm}
\caption{\small Triviality of the annealed Lyapunov exponents for ISRW, SEP, SVM 
in the strongly catalytic regime.}
\label{fig-lambda03}
\end{center}
\end{figure}
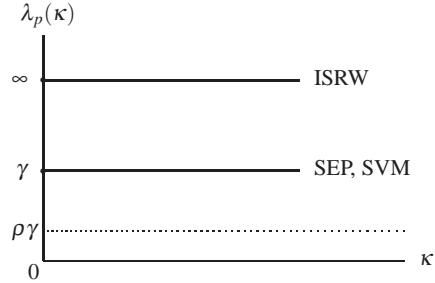

\begin{figure}[htbp]
\begin{center}
\setlength{\unitlength}{0.20cm}
\begin{picture}(-10,10)(13,-1)
\put(0,-4){\line(18,0){18}}
\put(0,-4){\line(0,14){16}}
{\thicklines
\qbezier(0,8)(2,6.6)(4,5.3)
\qbezier(0,6)(2,4.8)(4,3.8)
\qbezier(0,4)(2,3)(4,2.25)
}
{\thicklines
\qbezier(11,2.0)(13,1.4)(17,0.85)
\qbezier(11,1.4)(13,0.9)(17,0.55)
\qbezier(11,0.7)(13,0.4)(17,0.25)
}
\qbezier[80](0,0)(9,0)(18,0)
\put(-2.3,0){$\rho\gamma$}
\put(-1.2,-4.8){$0$}
\put(0,8){\circle*{.35}}
\put(0,6){\circle*{.35}}
\put(0,4){\circle*{.35}}
\put(-3.5,3.8){{\scriptsize$p=1$}}
\put(-3.5,5.8){{\scriptsize$p=2$}}
\put(-3.5,7.8){{\scriptsize$p=3$}}
\put(7,2.5){{\bf ?}}
\put(19,-4.1){$\kappa$}
\put(-1.5,13){$\lambda_p(\kappa)$}
\put(7,10){$d=3$ \,{\small ISRW, SEP}}
\put(7,8.5){$d=5$ \,{\small SVM}}
\end{picture}
\vspace{1.1cm}
\caption{Non-triviality of the annealed Lyapunov exponents for ISRW, SEP and 
SVM in the weakly catalytic regime at the critical dimension.}
\label{fig-lambda04}
\end{center}
\end{figure}

\begin{figure}[htbp]
\vspace{-0.5cm}
\begin{center}
\setlength{\unitlength}{0.20cm}
\begin{picture}(50,10)(7,1)
\put(24,-4){\line(18,0){18}}
\put(24,-4){\line(0,14){16}}
{\thicklines
\qbezier(24,8)(26,6.6)(28,5.5)
\qbezier(24,6)(26,4.8)(28,3.9)
\qbezier(24,4)(26,3)(28,2.4)
}
\qbezier[25](36,0.8)(38,0.5)(41,0.4)
\qbezier[60](24,0)(33,0)(42,0)
\put(21.7,0){$\rho\gamma$}
\put(22.8,-4.8){$0$}
\put(24,8){\circle*{.35}}
\put(24,6){\circle*{.35}}
\put(24,4){\circle*{.35}}
\put(20.5,3.8){{\scriptsize$p=1$}}
\put(20.5,5.8){{\scriptsize$p=2$}}
\put(20.5,7.8){{\scriptsize$p=3$}}
\put(31,2.5){{\bf ?}}
\put(43,-4.1){$\kappa$}
\put(22.5,13){$\lambda_p(\kappa)$}
\put(31,10){$d\geq 4$ \,{\small ISRW, SEP}}
\put(31,8.5){$d\geq 6$ \,{\small SVM}}
\end{picture}
\vspace{1.4cm}
\caption{Non-triviality of the annealed Lyapunov exponents for ISRW, SEP and 
SVM in the weakly catalytic regime above the critical dimension.} 
\label{fig-lambda05}
\end{center}
\end{figure}
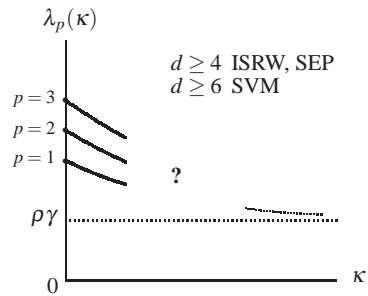

\noindent
\emph{Note:} For SVM the convexity of $\kappa\mapsto\lambda_p(\kappa)$ and its scaling behavior
for $\kappa\to\infty$ have not actually been proved, but have been argued on heuristic grounds. 

\medskip
Recently, there has been further progress for the case where $\xi$ consists of 
$1$ random walk (Schnitzler and Wolff~\cite{schwol}) or $n$ independent 
random walks (Castell, G\"un and Maillard~\cite{casgunmai11}), $\xi$ is 
the SVM (Maillard, Mountford and Sch\"opfer~\cite{maimousch11}), and for 
the trapping version of the PAM with $\gamma\in(-\infty,0)$ (Drewitz, 
G\"artner, Ram\'{i}rez and Sun~\cite{dregarramsun11}). All these papers 
appear elsewhere in the present volume.


\subsection{Main results}
\label{S1.4}

We have six theorems, all relating to the \emph{quenched} Lyapunov exponent and 
extending the results on the annealed Lyapunov exponents listed in Section~\ref{S1.3}.
 
Let $e$ be any nearest-neighbor site of $0$, and abbreviate
\begin{equation}
\label{Ixidef}
I^\xi(x,t) = \int_0^t [\xi(x,s)-\rho]\,ds, \qquad x \in \Z^d, \,t\geq 0.
\end{equation}
Our first three theorems deal with general $\xi$ and employ four successively stronger 
\emph{noisiness conditions}:
\begin{eqnarray}
\label{E1lim}
&&\lim_{t\to\infty} \frac{1}{\log t}\,\EE\big(|I^\xi(0,t)-I^\xi(e,t)|\big) = \infty,\\
\label{E2E4scal}
&&\liminf_{t\to\infty} \frac{1}{t}\,\EE\big(|I^\xi(0,t)-I^\xi(e,t)|^2\big) > 0, 
\,\,\,\limsup_{t\to\infty} \frac{1}{t^2}\,\EE\big(|I^\xi(0,t)|^4\big) < \infty,\\
\label{expdevweak}
&&\limsup_{t\to\infty} \frac{1}{t^{2/3}}\,\log \Big[\sup_{\eta\in\Omega} 
\PP_{\eta}\big(I^\xi(0,t)>t^{5/6}\big)\Big] < 0,\\
\label{expdev}
&&\exists\,c<\infty\colon\,\,\sup_{\eta\in\Omega} 
\EE_{\eta}\big(\exp\big[\mu I^\xi(0,t)\big]\big) \leq \exp[c\mu^2 t]
\qquad\forall\,\mu,t>0, 
\end{eqnarray}
where $\PP_{\eta}$ denotes the law of $\xi$ starting from $\xi_0=\eta$.

\begin{theorem}
\label{th1}
Fix $d\geq 1$, $\kappa\in[0,\infty)$ and $\gamma\in(0,\infty)$. The limit in 
{\rm (\ref{qLyapdef})} exists $\PP$-a.s.\ and in $\PP$-mean, and is finite.
\end{theorem}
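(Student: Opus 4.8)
\emph{Step~1 (subadditivity).} The plan is to exhibit $t\mapsto-\log u(0,t)$ as a \emph{stationary subadditive} process, apply Kingman's subadditive ergodic theorem along integer times, and then interpolate to real times. Write $u(0,t)=u(0,t)[\xi]$ to display the dependence on the environment, and for $h\geq 0$ let $\theta_h$ denote the time-shift $(\theta_h\xi)(x,s)=\xi(x,s+h)$, which preserves $\PP$ by \eqref{staterg}. Applying the Feynman--Kac formula \eqref{fey-kac1} to the propagator of \eqref{pA} between times $t_1$ and $t_1+t_2$ yields a nonnegative transition kernel $U_{t_1,t_1+t_2}(0,y)$ with $U_{0,t}(\,\cdot\,,0)=u(\,\cdot\,,t)$ and
\[
u(0,t_1+t_2)=\sum_{y\in\Z^d}U_{t_1,t_1+t_2}(0,y)\,u(y,t_1).
\]
Shifting the time origin by $t_1$ in the Feynman--Kac representation shows $U_{t_1,t_1+t_2}(0,0)[\xi]=u(0,t_2)[\theta_{t_1}\xi]$; keeping only the term $y=0$ (all terms being $\geq 0$) gives
\[
u(0,t_1+t_2)[\xi]\;\geq\;u(0,t_1)[\xi]\,u(0,t_2)[\theta_{t_1}\xi],\qquad t_1,t_2\geq 0 .
\]
Hence $g_{a,b}:=-\log u(0,b-a)[\theta_a\xi]$, $0\leq a\leq b$, satisfies $g_{0,t_1+t_2}\leq g_{0,t_1}+g_{t_1,t_1+t_2}$ and $g_{a+h,b+h}=g_{a,b}\circ\theta_h$: it is a stationary subadditive process.

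\emph{Step~2 (Kingman; integer times).} Restricting the Feynman--Kac expectation in \eqref{lyapdef1} to the event that $X^\kappa$ makes no jump on $[0,s]$ gives the crude lower bound
\[
u(0,s)[\xi]\;\geq\;e^{-2d\kappa s}\exp\!\Big[\gamma\!\int_0^s\xi(0,r)\,dr\Big]\;>\;0,\qquad s\geq 0,
\]
so that $g_{0,1}^{+}\leq 2d\kappa+\gamma\int_0^1|\xi(0,r)|\,dr$, which is integrable because $\xi(0,0)\in L^1(\PP)$ by \eqref{staterg}. Moreover $\inf_{n\in\N}\tfrac1n\EE(g_{0,n})=-\sup_{n\in\N}\tfrac1n\EE(\log u(0,n))\geq-c>-\infty$ by \eqref{lambdabd}. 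Kingman's subadditive ergodic theorem then provides a random variable $\lambda_0\in L^1(\PP)$ (in particular finite $\PP$-a.s.) with $\EE(\lambda_0)=\lim_{n\to\infty}\tfrac1n\EE(\log u(0,n))\in[\rho\gamma,c]$ such that $\tfrac1n\log u(0,n)\to\lambda_0$ both $\PP$-a.s.\ and in $L^1(\PP)$ as $n\to\infty$ through $\N$. (By the spatial invariance of $\lambda_0$ together with the ergodicity in \eqref{staterg}, $\lambda_0$ is in fact $\PP$-a.s.\ constant.)

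\emph{Step~3 (passage to real times; convergence a.s.\ and in mean).} Fix $t>0$, set $n=\lfloor t\rfloor$, and apply the subadditivity of Step~1 to the splittings $t=n+(t-n)$ and $n+1=t+(n+1-t)$; bounding the short-time factors from below by the estimate of Step~2, one obtains
\[
\log u(0,n)[\xi]-A_n\;\leq\;\log u(0,t)[\xi]\;\leq\;\log u(0,n+1)[\xi]+D_n ,
\]
where $A_n=2d\kappa+\gamma\int_0^1|\xi(0,n+r)|\,dr$ and $D_n=2d\kappa+\gamma\int_0^2|\xi(0,n+r)|\,dr$ are nonnegative, identically distributed and integrable. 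By the standard Borel--Cantelli argument $A_n/n\to 0$ and $D_n/n\to 0$ $\PP$-a.s., so dividing the display by $t$ and letting $t\to\infty$ gives $\Lambda_0(t)\to\lambda_0$ $\PP$-a.s. The same display yields
\[
|\Lambda_0(t)-\lambda_0|\;\leq\;|\Lambda_0(n)-\lambda_0|+2\,|\Lambda_0(n+1)-\lambda_0|+\frac{2|\lambda_0|+A_n+D_n}{n},
\]
and taking expectations, letting $t\to\infty$, and using the $L^1$-convergence of Step~2 together with $\lambda_0\in L^1(\PP)$ shows $\EE|\Lambda_0(t)-\lambda_0|\to 0$. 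Hence the limit in \eqref{qLyapdef} exists $\PP$-a.s.\ and in $\PP$-mean and is finite, as claimed.

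\emph{On the main obstacle.} The only genuinely non-routine step is Step~1: one must keep careful track of the time-reversal built into the Feynman--Kac representation \eqref{lyapdef1}, so that the propagator from $t_1$ to $t_1+t_2$ is correctly identified with $u(0,t_2)$ \emph{in the time-shifted environment} $\theta_{t_1}\xi$; it is precisely this identification that makes $-\log u(0,\cdot)$ a \emph{stationary} subadditive process, which is what Kingman's theorem requires. Everything else uses only the elementary lower bound obtained by freezing the random walk, together with standard facts about subadditive processes and about identically distributed integrable sequences.
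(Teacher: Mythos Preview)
Your proof is correct and follows essentially the same route as the paper: the paper introduces $\chi(s,t)=\ES_0(\exp[\gamma\int_0^{t-s}\xi(X^\kappa(v),s+v)\,dv]\,\delta_0(X^\kappa(t-s)))$, which is precisely your $u(0,t-s)[\theta_s\xi]$, proves the same product inequality $\chi(s,t)\geq\chi(s,u)\chi(u,t)$ by inserting $\delta_0(X^\kappa(u-s))$ and using the Markov property, and then invokes Kingman's superadditive ergodic theorem. The only difference is that the paper applies the continuous-parameter version of Kingman directly, whereas you work along integer times and add a careful interpolation (Step~3) together with an explicit check of the integrability hypotheses (Step~2); these extra details are correct and in fact make your write-up more self-contained than the paper's.
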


\begin{theorem}
\label{th2}
Fix $d \geq 1$ and $\gamma \in (0,\infty)$.\\
(i) $\lambda_0(0)=\rho\gamma$ and $\rho\gamma<\lambda_0(\kappa)<\infty$ for all 
$\kappa\in(0,\infty)$ with $\rho=\EE(\xi(0,0))\in\R$.\\
(ii) $\kappa\mapsto\lambda_0(\kappa)$ is globally Lipschitz outside any neighborhood 
of $0$. Moreover, if $\xi$ is bounded from above, then the Lipschitz constant at 
$\kappa$ tends to zero as $\kappa\to\infty$.\\
(iii) If $\xi$ satisfies condition {\rm (\ref{E1lim})} and is bounded from below,
then $\kappa\mapsto\lambda_0(\kappa)$ is not Lipschitz at $0$.
\end{theorem}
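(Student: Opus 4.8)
\textbf{Proof proposal for Theorem~\ref{th2}(iii).}

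The plan is to show that $\lambda_0(\kappa)-\rho\gamma$ cannot be bounded above by a multiple of $\kappa$ as $\kappa\downarrow 0$; by part~(i) this quantity is positive for $\kappa>0$ and vanishes at $\kappa=0$, so ``not Lipschitz at $0$'' means precisely that $\lambda_0(\kappa)-\rho\gamma \gg \kappa$ along some sequence $\kappa\downarrow 0$. Since by (\ref{lyapdef2}) we may subtract $\rho\gamma$ from the exponent (replacing $\xi$ by $\xi-\rho$ in the Feynman--Kac integral), the task is to obtain a lower bound on
\begin{equation}
\label{reduced}
\lambda_0(\kappa)-\rho\gamma = \lim_{t\to\infty} \frac1t \log \ES_{\,0}\Big(\exp\Big[\gamma\int_0^t [\xi(X^\kappa(s),s)-\rho]\,ds\Big]\delta_0(X^\kappa(t))\Big)
\end{equation}
that goes to zero strictly slower than linearly in $\kappa$. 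The natural strategy is a \emph{localization} argument: force the random walk $X^\kappa$ to sit still at the origin up to a large time $T=T(\kappa)$ and then return to $0$ by time $t$. On the event that $X^\kappa$ does not move before time $T$, the walk contributes a probability cost $e^{-2d\kappa T}$ (the walk steps at rate $2d\kappa$), while the exponential functional becomes exactly $\exp[\gamma I^\xi(0,T)]$ with $I^\xi$ as in (\ref{Ixidef}). After time $T$ we can, for instance, demand the walk be back at $0$ at time $t$, at a further bounded multiplicative cost per unit time; optimizing the resulting bound
\[
\lambda_0(\kappa)-\rho\gamma \;\gtrsim\; \limsup_{T\to\infty}\frac1T\Big(\gamma\,\EE\big(I^\xi(0,T)\text{ on a good event}\big) - 2d\kappa T + O(1)\Big)
\]
over $T$ is the crux.

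The key point is that condition (\ref{E1lim}) guarantees that $I^\xi(0,T)$ is \emph{typically large} on the scale $\log T$: more precisely, since $\xi$ is bounded from below, $I^\xi(0,T)+cT\ge 0$ for a constant $c$, and (\ref{E1lim}) (applied at $0$ and at a neighbor $e$, together with stationarity) forces $\EE(|I^\xi(0,T)|)/\log T\to\infty$, hence $\EE\big((I^\xi(0,T))^+\big)/\log T\to\infty$ as well. So there is an event $G_T$ of probability bounded away from $0$ on which $I^\xi(0,T)\ge a_T$ with $a_T/\log T\to\infty$ (this can be extracted by a Paley--Zygmund-type or truncation argument, using boundedness from below to control the negative part). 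On $G_T$ the displayed lower bound gives, for $t\to\infty$ first and then a union over the (random but positive-probability) occurrences of $G_T$ in $[0,t]$,
\[
\lambda_0(\kappa)-\rho\gamma \;\ge\; \sup_{T\ge 1}\Big(\frac{\gamma a_T}{T} - 2d\kappa\Big) - o(1).
\]
Now choose $T=T(\kappa)\to\infty$ as $\kappa\downarrow 0$ so that $2d\kappa \le \tfrac12 \gamma a_T/T$, e.g.\ $T(\kappa)=\inf\{T:\gamma a_T/T\le 4d\kappa\}$; then $\lambda_0(\kappa)-\rho\gamma\ge \tfrac{\gamma}{2}\,a_{T(\kappa)}/T(\kappa) \ge 2d\kappa$, which is not yet enough, so instead one picks $T(\kappa)$ slightly larger, making $\gamma a_T/T$ comparable to $\kappa\log(1/\kappa)$ or a similar super-linear-in-$\kappa$ quantity; the divergence $a_T/\log T\to\infty$ is exactly what lets $(\lambda_0(\kappa)-\rho\gamma)/\kappa\to\infty$.

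The main obstacle I expect is the passage from the single-time, single-site statement (\ref{E1lim}) to an almost-sure lower bound that survives the $t\to\infty$ limit defining $\lambda_0$. Running the walk quietly on $[0,T]$ only sees a fixed window of the environment; to upgrade to a genuine growth rate one must either (a) invoke Theorem~\ref{th1} to know the limit exists and is $\PP$-a.s.\ constant, and then estimate $\EE(\Lambda_0(t))$ from below using Jensen/Fubini applied to the localized Feynman--Kac lower bound (so that one only needs $\EE(I^\xi(0,T))$, not a high-probability event), or (b) tile $[0,t]$ into $\lfloor t/T\rfloor$ blocks and use stationarity and ergodicity of $\xi$ in time so that a positive fraction of blocks realize $G_T$; the walk need not return to $0$ between blocks if we work with $u(\cdot,0)\equiv$ something, but with $\delta_0$ initial data one uses that $\mathrm{P}_0(X^\kappa(t)=0)$ decays only subexponentially, so the return constraint costs nothing on the exponential scale. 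Option (a) is cleaner: by Jensen applied to the normalized path measure one gets $\EE(\Lambda_0(t)) \ge \rho\gamma - 2d\kappa + \tfrac{\gamma}{T}\EE(I^\xi(0,T)) + \tfrac1t\log\mathrm{P}_0(\cdots) \to \rho\gamma - 2d\kappa + \tfrac{\gamma}{T}\EE(I^\xi(0,T))$, and then optimizing over $T$ using $\EE(I^\xi(0,T)) = \EE((I^\xi(0,T))^+) - \EE((I^\xi(0,T))^-)$ with the positive part growing faster than $\log T$ by (\ref{E1lim}) and the negative part controlled by the lower bound on $\xi$; this yields $\lambda_0(\kappa)-\rho\gamma \ge \sup_T(\gamma\,\EE(I^\xi(0,T))/T - 2d\kappa)$, and the super-logarithmic growth forces this to dominate any linear function of $\kappa$ near $0$. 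Verifying that the negative part $\EE((I^\xi(0,T))^-)$ does not cancel the gain — which is where boundedness from below of $\xi$ enters, via $(I^\xi(0,T))^- \le (\rho - \inf\xi)\,T^{?}$, actually one needs a finer estimate since a crude linear bound is useless — is the delicate step; one resolves it by noting $\EE(I^\xi(0,T))$ itself (not just $\EE|I^\xi(0,T)|$) must be large: by stationarity $\EE(I^\xi(0,T))=0$, so this approach needs (\ref{E1lim}) to be exploited at the level of a good event after all, bringing us back to option (b), where the block/tiling argument combined with a Paley--Zygmund extraction of $G_T$ from (\ref{E1lim}) and the lower bound on $\xi$ is the correct route.
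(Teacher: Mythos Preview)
Your localization strategy contains the right ingredients but misses the crucial mechanism. You correctly reduce to lower-bounding $\lambda_0(\kappa)-\rho\gamma$ via a Feynman--Kac restriction, and you correctly observe that $\EE|I^\xi(0,T)|/\log T\to\infty$ follows from (\ref{E1lim}) and the triangle inequality. But forcing the walk to sit at a \emph{single} site cannot produce a gain: if $X^\kappa$ stays at $0$ on every block, the exponent contributed per block is $\gamma I^\xi(0,\text{block})$, whose $\PP$-expectation is zero, so by ergodicity the long-run average vanishes and you recover only the trivial bound $\lambda_0(\kappa)-\rho\gamma\ge -2d\kappa$. Your attempts to rescue this via a good-event extraction run into exactly the obstacle you name but do not resolve: Paley--Zygmund requires a second-moment bound you do not have, and the lower bound on $\xi$ only gives $I^\xi(0,T)\ge -cT$, which is linear in $T$ and swamps any $a_T\gg\log T$ gain on the complementary fraction of blocks. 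You end the proposal essentially conceding this.

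The missing idea is to give the walk a \emph{choice} between two sites. Partition time into blocks of length $T-1$, each preceded by a buffer of length $1$. On each block let the walk sit at whichever of $\{0,e\}$ maximizes $I_j^\xi(x)=\int_{\text{block}}[\xi(x,s)-\rho]\,ds$, using the buffer to switch sites if needed; boundedness from below is used only here, to bound the exponent on each unit-length buffer by a constant. The gain per block is $\max\{I_j^\xi(0),I_j^\xi(e)\}$, and since both entries have mean zero,
\[
\EE\big(\max\{I_j^\xi(0),I_j^\xi(e)\}\big)=\tfrac12\,\EE\big|I_j^\xi(0)-I_j^\xi(e)\big|=\tfrac12\,E_1(T-1),
\]
which is \emph{strictly positive} and, by (\ref{E1lim}) directly, grows faster than $\log T$. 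The probabilistic cost is $e^{-2d\kappa(T-1)}$ per block for staying put, times $p_1^\kappa(e)\sim\kappa$ per buffer for the possible transition, giving a $\log(1/\kappa)$ penalty. Ergodicity in time converts the block sum into $n$ times the expectation, and one obtains
\[
\lambda_0(\kappa)-\rho\gamma\ \ge\ -O(1/T)-2d\kappa + [1+o_\kappa(1)]\,T^{-1}\big[\tfrac{\gamma}{2}E_1(T-1)-\log(1/\kappa)\big].
\]
Choosing $T=\kappa^{-\chi}$ with $\chi\in(0,1)$ and using that $E_1(T)\ge c\log T$ eventually for every $c$, this yields $\lambda_0(\kappa)-\rho\gamma\ge [\tfrac12 c\gamma\chi-1]\,\kappa^\chi\log(1/\kappa)\,[1+o_\kappa(1)]$, which for $c$ large is positive and dominates every linear function of $\kappa$ near $0$.
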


\begin{theorem}
\label{th3}
(i) If $\xi$ satisfies condition {\rm (\ref{E2E4scal})} and is bounded from below, then
\begin{equation}
\label{slope}
\liminf_{\kappa\da 0}\, \,\log(1/\kappa)\,[\lambda_0(\kappa)-\rho\gamma] > 0.
\end{equation}
(ii) If $\xi$ is a Markov process that satisfies condition {\rm (\ref{expdevweak})} 
and is bounded from above, then 
\begin{equation}
\label{slopealt}
\limsup_{\kappa\da 0}\,\,[\log(1/\kappa)]^{1/6}\,
[\lambda_0(\kappa)-\rho\gamma] < \infty.
\end{equation}
(iii) If $\xi$ is a Markov process that satisfies condition {\rm (\ref{expdev})} 
and is bounded from above, then 
\begin{equation}
\label{cont}
\limsup_{\kappa\da 0}\,\,\frac{\log(1/\kappa)}{\log\log(1/\kappa)}\,
[\lambda_0(\kappa)-\rho\gamma] < \infty.
\end{equation}
\end{theorem}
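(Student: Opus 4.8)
The plan is to work with the Feynman--Kac formula \eqref{lyapdef1} in its centred form: setting $\tilde u(0,t)=u(0,t)e^{-\gamma\rho t}$, one has $\lambda_0(\kappa)-\rho\gamma=\lim_{t\to\infty}\tfrac1t\log\tilde u(0,t)$ $\PP$-a.s., where
\begin{equation*}
\tilde u(0,t)=\ES_{\,0}\!\left(\exp\!\left[\gamma\!\int_0^t\!\big(\xi(X^\kappa(s),s)-\rho\big)\,ds\right]\delta_0(X^\kappa(t))\right),
\end{equation*}
and this limit is a deterministic constant by Theorem~\ref{th1}. Constancy will be used repeatedly: an almost sure bound $\lambda_0(\kappa)-\rho\gamma\ge G(\xi)$ with deterministic left-hand side upgrades to $\lambda_0(\kappa)-\rho\gamma\ge\mathrm{ess\,sup}\,G\ge\EE(G)$ (and dually for upper bounds), which in particular lets one replace Birkhoff averages for the time shift $\theta_T$ (a priori not ergodic for a fixed $T$) by genuine expectations.

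For \textbf{(i)} I would lower-bound $\tilde u(0,t)$ by confining $X^\kappa$ to a favourable class of paths. Fix a block length $T=T(\kappa)$, write $t=kT$, and keep only those paths that sit at $0$ at every boundary $jT$, stay at $0$ throughout the last block (forcing $\delta_0(X^\kappa(t))=1$), and in block $B_j=[(j-1)T,jT]$ for $j<k$ either stay at $0$ or, whenever $D_j:=\int_{B_j}[\xi(e,s)-\xi(0,s)]\,ds>0$, perform one excursion $0\to e\to0$ with jumps in the unit windows $[(j-1)T,(j-1)T+1]$ and $[jT-1,jT]$. An exact computation shows this class has $\PS_{\,0}$-probability $e^{-2d\kappa t}\kappa^{2m}$ with $m$ the number of excursion blocks, and on it the integrand is bounded below by $\exp[\gamma\int_0^{kT}(\xi(0,s)-\rho)\,ds+\gamma\sum_{j\in\mathrm{exc}}(D_j-E_j)]$, where $E_j\ge0$ is the loss over the two transit windows (here ``$\xi$ bounded below'' is used to keep these contributions integrable and bounded below on average). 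Taking $\tfrac1t\log$, sending $t\to\infty$, and using the ergodic theorem (for the flow, and for $\theta_T$ together with the constancy trick) gives
\begin{equation*}
\lambda_0(\kappa)-\rho\gamma\ \ge\ -2d\kappa+\frac{2\,\PP(D_1>0)}{T}\log\kappa+\frac{\gamma}{T}\,\EE\!\left[(D_1-E_1)\,\mathbf{1}_{\{D_1>0\}}\right].
\end{equation*}
Now \eqref{E2E4scal} yields $\EE(D_1^2)\ge cT$ and $\EE(D_1^4)\le c'T^2$ for large $T$, and $D_1$ is symmetric by the symmetry of the dynamics, so Paley--Zygmund (via $\EE|D_1|\ge(\EE D_1^2)^{3/2}/(\EE D_1^4)^{1/2}$) gives $\EE[D_1\mathbf{1}_{\{D_1>0\}}]=\tfrac12\EE|D_1|\ge c_0\sqrt T$ for $T\ge T_0$, while $\EE[E_1\mathbf{1}_{\{D_1>0\}}]\le\EE(E_1)<\infty$ and $\PP(D_1>0)\le1$; since $\log\kappa<0$ this gives $\lambda_0(\kappa)-\rho\gamma\ge\tfrac{\gamma c_0}{\sqrt T}+\tfrac{2\log\kappa}{T}-O(1/T)-2d\kappa$, and choosing $T\asymp(\log(1/\kappa))^2$ makes the right-hand side of order $1/\log(1/\kappa)$ with positive sign, i.e.\ \eqref{slope}.

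For \textbf{(ii)}--\textbf{(iii)} I would run the dual argument, starting from $\lambda_0(\kappa)-\rho\gamma=\lim_t\tfrac1t\EE\log\tilde u(0,t)$ and $\tilde u(0,t)\le\ES_{\,0}(\exp[\gamma\int_0^t(\xi(X^\kappa(s),s)-\rho)\,ds])$. Cut $[0,t]$ into $k=t/L$ blocks of length $L=L(\kappa)$. For $\kappa$ small the walk is nearly frozen (its jump count is $\mathrm{Poisson}(2d\kappa t)$): outside an event whose probability decays faster than $e^{-\gamma(M-\rho)t}$ it makes $\le t$ jumps in $[0,t]$ and stays in a box of radius $\le2\gamma(M-\rho)t/\log(1/\kappa)$, and since $\xi\le M$ forces the exponent to be $\le\gamma(M-\rho)t$, the discarded paths contribute negligibly. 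On the complement one bounds each block's contribution by $(1+r_j)$ times a maximum, over the $1+r_j$ sites visited in block $j$ and over sub-windows of the block, of $\int_a^b(\xi(y,s)-\rho)\,ds$ (here $r_j$ is the number of jumps in block $j$, and $\sum_j r_j\le t$); in the ergodic average these maxima are, relative to the block-start position, maxima over a number of sites polylogarithmic in $1/\kappa$ (those reachable for an affordable entropic price $\asymp r_j\log(1/\kappa)$), whose tail is controlled by the noisiness hypothesis: \eqref{expdev} (plus a maximal inequality for $t\mapsto I^\xi(y,t)$ and uniformity in $\eta$) gives a Gaussian tail $\exp[-s^2/cL]$, hence a per-block value $w_\kappa(L)\asymp\sqrt{L\log\log(1/\kappa)}$, whereas \eqref{expdevweak} gives only control at the single scale $s\asymp L^{5/6}$, hence $w_\kappa(L)\asymp L^{5/6}$ up to logarithmic factors. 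Markovness lets one iterate the block estimates (a skeleton-chain Feynman--Kac); averaging over the $k$ blocks (ergodic theorem plus constancy) and subtracting the per-block entropic cost $\asymp\log(1/\kappa)$ of the walk's motion yields a bound of the form $\lambda_0(\kappa)-\rho\gamma\le\tfrac{C}{L}\,w_\kappa(L)-\tfrac{c\log(1/\kappa)}{L}$, and optimising $L$ over $\kappa$ ($L\asymp(\log\tfrac1\kappa)^2/\log\log\tfrac1\kappa$ under \eqref{expdev}, $L\asymp(\log\tfrac1\kappa)^{6/5}$ under \eqref{expdevweak}) produces \eqref{cont} and \eqref{slopealt}.

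The main obstacle is the quenched upper bound, specifically the exclusion of ``teleportation''. Any naive factorisation of $\ES_{\,0}(\exp[\gamma\int_0^t(\xi-\rho)])$ over time-blocks that replaces a block by the supremum over its starting site, $\le\prod_j\sup_y\ES_{\,y}(\exp[\gamma\int_0^L(\xi-\rho)])$, lets the walk jump instantly to the globally best site of $\Z^d$; since such a site is produced somewhere at every scale, this returns only the trivial $\lambda_0(\kappa)-\rho\gamma\le\gamma(M-\rho)$, and the same collapse recurs at any further layer of the argument (e.g.\ for the skeleton chain). One must instead keep the walk genuinely localised --- limiting the number of sites reachable per block to something polylogarithmic in $1/\kappa$, and charging the correct cost $\asymp(\#\text{jumps})\log(1/\kappa)$ for displacement --- which is exactly where the Markov property and the upper bound on $\xi$ are indispensable, and which is what produces the extra $\log\log(1/\kappa)$ in \eqref{cont} and the exponent loss in \eqref{slopealt}; as in the passage from \eqref{sing1} to \eqref{sing2} in the white-noise case, these extra factors are not expected to be sharp.
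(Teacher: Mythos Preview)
For part (i), your approach matches the paper's closely. Both confine $X^\kappa$ to paths that switch between $0$ and a neighbour $e$ on a block schedule of length $T$, use the ergodic theorem to extract the average per-block gain $\tfrac12 E_1(T)$, deduce $E_1(T)\gtrsim\sqrt T$ from \eqref{E2E4scal} (the paper via truncation plus Cauchy--Schwarz, you via Paley--Zygmund; these are equivalent), and balance at $T\asymp(\log\tfrac1\kappa)^2$. One small remark: your identity $\EE[D_1\mathbf 1_{\{D_1>0\}}]=\tfrac12\EE|D_1|$ follows simply from $\EE D_1=0$, not from any distributional symmetry of the dynamics.

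For parts (ii)--(iii), your outline correctly locates the difficulty (teleportation) but does not resolve it, and the paper's device is different from what you sketch. After choosing $T=K\log(1/\kappa)$, the paper splits via Cauchy--Schwarz into \emph{black} intervals (those in which $X^\kappa$ jumps) and \emph{white} intervals (no jump). The black part is handled trivially from $\xi\le M$, giving $\mu_0^{\mathrm b}(\kappa)\le d\kappa^{1-2\gamma K}$. For the white part the paper does \emph{not} average or optimise over paths; instead it introduces, for each path skeleton $\chi$, a deterministic threshold $\lambda(\chi)=\sum_{l\ge 0}a_lk_l(\chi)$ (with $k_l(\chi)$ the number of intervals in which $\chi$ makes exactly $l$ jumps, and $a_l\asymp lT$ for $l\ge1$, $a_0=K'\log\log(1/\kappa)$ under \eqref{expdev}), and shows via the Markov property plus \eqref{expdev} that $\sum_{\chi}\PP(A^{(n)}(\chi;\lambda(\chi)))$ is summable in $n$. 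By Borel--Cantelli, the white sum is then $\le\lambda(\chi)$ \emph{for every} $\chi$ simultaneously, $\xi$-a.s. The weights $a_l\asymp lT$ for $l\ge1$ are exactly what makes the union bound over all skeletons converge, and they are harmlessly re-absorbed in the final $\ES_0$-expectation because intervals with many jumps are exponentially rare. This is how teleportation is excluded: no supremum over starting sites is ever taken; rather, a single $\xi$-a.s.\ bound is proved that holds for every skeleton at once.

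Your displayed inequality $\lambda_0(\kappa)-\rho\gamma\le\tfrac{C}{L}w_\kappa(L)-\tfrac{c\log(1/\kappa)}{L}$ cannot be right as written: the right-hand side becomes negative for large $L$, contradicting Theorem~\ref{th2}(i). You presumably intend a competition between per-block gain and per-jump cost, but the walk may always take $r_j=0$ and pay nothing, so no subtraction survives in an upper bound; what remains is the per-block gain at whatever site the walk actually occupies, and controlling this simultaneously for \emph{all} sites the walk might occupy, without collapsing to a global supremum, is precisely the step that requires the Borel--Cantelli construction above. Your ``skeleton-chain Feynman--Kac'' idea points in the right direction, but as stated it does not supply the missing uniformity.
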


Our last three theorems deal with ISRW, SEP and SVM. 

\begin{theorem}
\label{th4}
For ISRW, SEP and SVM in the weakly catalytic regime, $\lim_{\kappa\to\infty}$
$\lambda_0(\kappa) = \rho\gamma$.
\end{theorem}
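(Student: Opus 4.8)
The plan is to squeeze the quenched exponent between its trivial lower bound $\rho\gamma$ and the first annealed exponent $\lambda_1(\kappa)$, and then to invoke the fact that $\lambda_1(\kappa)\downarrow\rho\gamma$ as $\kappa\to\infty$ in the weakly catalytic regime. That is, I would first establish
\[
\rho\gamma\;\le\;\lambda_0(\kappa)\;\le\;\lambda_1(\kappa)\qquad\text{for every }\kappa\in[0,\infty),
\]
and then let $\kappa\to\infty$.

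The left-hand inequality is already contained in the note following (\ref{lyapdef3}): Jensen applied to (\ref{lyapdef2}) with the initial datum (\ref{ic}) gives $\EE(\Lambda_0(t))\ge\rho\gamma+\tfrac1t\log\PS_0(X^\kappa(t)=0)$, and letting $t\to\infty$ (using Theorem~\ref{th1} to identify the $\PP$-mean limit of $\Lambda_0(t)$ with $\lambda_0(\kappa)$) yields $\lambda_0(\kappa)\ge\rho\gamma$; in particular $\liminf_{\kappa\to\infty}\lambda_0(\kappa)\ge\rho\gamma$. For the right-hand inequality I would use Theorem~\ref{th1} once more: since $\Lambda_0(t)\to\lambda_0(\kappa)$ in $\PP$-mean, $\lambda_0(\kappa)=\lim_{t\to\infty}\tfrac1t\EE(\log u(0,t))$, and applying Jensen to the concave map $\log$ under the $\PP$-expectation gives $\EE(\log u(0,t))\le\log\EE(u(0,t))$; since for ISRW, SEP and SVM the annealed limit (\ref{aLyapdef}) with $p=1$ exists and is finite in the weakly catalytic regime, this gives $\lambda_0(\kappa)\le\lambda_1(\kappa)$. (Equivalently, one obtains $\lambda_0(\kappa)\le\lambda_1(\kappa)$ from a Chernoff bound on $u(0,t)=e^{t\Lambda_0(t)}$ together with Borel--Cantelli along $t\in\N$.) Thus Theorem~\ref{th4} is reduced to showing $\limsup_{\kappa\to\infty}\lambda_1(\kappa)\le\rho\gamma$.

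For ISRW ($d\ge3$ and $\gamma G_d<1$) and for SEP ($d\ge3$) this is already known: in \cite{garhol06} and \cite{garholmai09} it is shown that $\kappa\,[\lambda_1(\kappa)-\rho\gamma]$ converges to a finite constant as $\kappa\to\infty$, hence $\lambda_1(\kappa)\to\rho\gamma$, which together with the first step proves Theorem~\ref{th4} for these two dynamics. For the SVM ($d\ge5$) only the precise $1/\kappa$-scaling of $\lambda_1$ was argued heuristically, so I would prove the weaker one-sided statement $\limsup_{\kappa\to\infty}\lambda_1(\kappa)\le\rho\gamma$ directly. Writing $\lambda_1(\kappa)=\rho\gamma+\lim_{t\to\infty}\tfrac1t\log\ES_0\EE\big(\exp[\gamma\!\int_0^t(\xi(X^\kappa(s),s)-\rho)\,ds]\big)$, I would expand the exponential in powers of $\gamma$ and bound the mixed moments $\EE\big(\prod_{i=1}^k\xi(X^\kappa(s_i),s_i)\big)$ of the voter model in equilibrium through its graphical (dual coalescing random walk) representation. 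The mechanism is that the $\kappa$-fold time speed-up of $X^\kappa$ pushes the sampled space--time points apart on the scale seen by the dual walks, so the voter correlations decay and the moments approach $\rho^k$, forcing the exponential growth rate of the series to $0$ as $\kappa\to\infty$. This is exactly the large-$\kappa$ regime analysed in \cite{garholmai10}, whose estimates supply the bound; for a self-contained argument one reworks that cluster expansion keeping explicit track of uniformity in $t$.

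The main obstacle is the SVM step. The two Jensen inequalities and the ISRW/SEP citations are routine; the real work is to turn ``a fast walk averages $\xi$ to $\rho$'' into a genuine upper bound on $\lambda_1(\kappa)$, i.e.\ to control, uniformly in $t$, an exponential moment of an additive functional of the (non-reversible) voter model sampled along a random-walk path, with the cluster-expansion error terms shown to be negligible on the exponential scale first as $t\to\infty$ and then as $\kappa\to\infty$.
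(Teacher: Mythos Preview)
Your approach is exactly the paper's: sandwich $\lambda_0(\kappa)$ between $\rho\gamma$ and $\lambda_1(\kappa)$ and invoke the known fact that $\lambda_1(\kappa)\to\rho\gamma$ in the weakly catalytic regime. The paper's proof is two lines and cites Section~\ref{S1.3.2} for all three dynamics.

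Your only misstep is over-reading the Note following the list in Section~\ref{S1.3.2}. That Note flags as heuristic only the \emph{convexity} of $\kappa\mapsto\lambda_p(\kappa)$ and the \emph{precise $1/\kappa$ scaling} of $\lambda_p(\kappa)-\rho\gamma$ for SVM, not the cruder limit $\lambda_1(\kappa)\to\rho\gamma$ itself. The paper treats the latter as established in \cite{garholmai10} (indeed, the proof of Theorem~\ref{th4} remarks that it even extends to non-symmetric voter models, citing Theorems~1.4--1.5 there), so the cluster-expansion argument you outline for SVM is unnecessary extra work.
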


\begin{theorem}
\label{th5}
ISRW and SEP satisfy conditions {\rm (\ref{E1lim})} and {\rm (\ref{E2E4scal})}.
\end{theorem}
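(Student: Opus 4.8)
The plan is to prove condition \eqref{E2E4scal} directly and to deduce \eqref{E1lim} from it. For the deduction set $Y_t = I^\xi(0,t)-I^\xi(e,t)$ and apply Hölder's inequality to $Y_t^2 = |Y_t|^{2/3}\,|Y_t|^{4/3}$ with exponents $\tfrac32$ and $3$, which gives $\EE(Y_t^2)\le\EE(|Y_t|)^{2/3}\,\EE(Y_t^4)^{1/3}$, i.e.\ $\EE(|Y_t|)\ge\EE(Y_t^2)^{3/2}\,\EE(Y_t^4)^{-1/2}$. Since $\xi$ is spatially stationary, $I^\xi(e,t)$ has the same law as $I^\xi(0,t)$, so Minkowski's inequality gives $\EE(Y_t^4)\le16\,\EE(I^\xi(0,t)^4)$. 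Inserting the two bounds of \eqref{E2E4scal} then yields $\EE(|Y_t|)\ge c\,t^{1/2}$ for all large $t$, which dominates $\log t$ and proves \eqref{E1lim}.

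The common input for \eqref{E2E4scal} is the space--time two-point function. For both ISRW and SEP the one-point function evolves as a single random walk, $\EE(\xi(y,t)\mid\cF_s)=\sum_z p_{t-s}(y,z)\,\xi(z,s)$, where $p_u$ is the transition kernel of the single underlying walk (nearest-neighbour SRW for ISRW, the symmetric kernel of the dynamics for SEP): for ISRW because the particles move independently, for SEP because the exclusion interaction cancels in the generator applied to $\xi(y,\cdot)$. Combined with the fact that at each fixed time $\xi_s$ has law $\nu_\rho$, so $\EE(\xi(x,s)\xi(z,s))=\rho^2+\sigma^2\,\mathbf{1}_{\{x=z\}}$ with $\sigma^2=\mathrm{Var}(\xi(0,0))\in(0,\infty)$ (equal to $\rho$ for ISRW and to $\rho(1-\rho)$ for SEP), this gives $\mathrm{Cov}(\xi(x,s),\xi(y,t))=\sigma^2 p_{|t-s|}(x,y)$, hence $\EE(I^\xi(0,t)^2)=2\sigma^2\int_0^t(t-u)p_u(0,0)\,du$ and $\EE(Y_t^2)=4\sigma^2\int_0^t(t-u)[p_u(0,0)-p_u(0,e)]\,du$. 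By Fourier inversion $p_u(0,0)-p_u(0,e)=(2\pi)^{-d}\int_{[-\pi,\pi]^d}(1-\cos(\theta\cdot e))\,e^{-u\lambda(\theta)}\,d\theta\ge0$, with $\lambda(\theta)\ge0$ the symbol of the walk, and the potential-type constant $a(e):=\int_0^\infty[p_u(0,0)-p_u(0,e)]\,du=(2\pi)^{-d}\int(1-\cos(\theta\cdot e))/\lambda(\theta)\,d\theta$ is finite (the numerator vanishes quadratically at $\theta=0$, which kills the singularity of $1/\lambda$) and strictly positive. Restricting the integral for $\EE(Y_t^2)$ to $u\le t/2$, where $t-u\ge t/2$, gives $\EE(Y_t^2)\ge2\sigma^2 t\int_0^{t/2}[p_u(0,0)-p_u(0,e)]\,du$, so $\liminf_{t\to\infty}\tfrac1t\EE(Y_t^2)\ge2\sigma^2 a(e)>0$, which is the first half of \eqref{E2E4scal}.

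For the fourth-moment bound I would split the centred fourth moment into its Gaussian and its connected part,
\[
\EE\Big(\prod_{i=1}^4[\xi(0,s_i)-\rho]\Big)=\mathrm{cum}_4(s_1,\dots,s_4)+\sum_{\text{3 pairings}}\mathrm{Cov}\cdot\mathrm{Cov},
\]
the terms involving $\mathrm{cum}_1$ or $\mathrm{cum}_3$ dropping out by centering. Integrated over $[0,t]^4$ the three pairing terms contribute $3\,\EE(I^\xi(0,t)^2)^2$, and since $\EE(I^\xi(0,t)^2)\le2\sigma^2 t\,G_d$ with $G_d=\int_0^\infty p_u(0,0)\,du<\infty$ in the transient regime (automatic for ISRW when $d\ge3$), this part is $O(t^2)$. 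For the connected part and ISRW, the Poisson--superposition cumulant formula gives $\mathrm{cum}_4(s_1,\dots,s_4)=\rho\,p_{u_1}(0,0)\,p_{u_2}(0,0)\,p_{u_3}(0,0)$, where $u_1,u_2,u_3$ are the successive time gaps of the ordered $s_i$; passing to these gaps and the smallest $s_i$ as variables bounds $\int_{[0,t]^4}|\mathrm{cum}_4|$ by $4!\,\rho\,t\,G_d^3=O(t)$. For SEP the closed form is lost, so I would instead use the graphical representation: the joint centred moments of $\xi$ at four space--time points are governed by a system of at most four dual exclusion random walks, and the connected part lives on the event that those dual walks are \emph{forced to meet}; each enforced meeting costs a factor $\le G_d(0,0)<\infty$, so again $\int_{[0,t]^4}|\mathrm{cum}_4|=O(t)$. (Alternatively one uses that the space--time correlations of SEP are dominated in absolute value by those of a system of independent walks, reducing the estimate to the ISRW case.) Together these give $\EE(I^\xi(0,t)^4)\le Ct^2$, the second half of \eqref{E2E4scal}.

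The main obstacle is the connected fourth-moment bound for SEP: for ISRW all correlation functions are explicit, while for SEP one must track the dual exclusion dynamics and verify that every genuinely connected contribution is paid for by a convergent Green-function factor, which is where transience of the SEP transition kernel (automatic for ISRW exactly when $d\ge3$) enters. By contrast, the two-point computation, the first half of \eqref{E2E4scal} and condition \eqref{E1lim} hold in every dimension; only the uniform $O(t^2)$ bound on the fourth moment needs transience.
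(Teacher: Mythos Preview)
Your approach is essentially the same as the paper's: reduce \eqref{E1lim} to \eqref{E2E4scal}, compute the two-point function explicitly to get the linear lower bound on $E_2(T)$, and estimate the four-point function to bound $\bar E_4(T)$ by $O(T^2)$. The paper carries out the reduction by truncating $|I^\xi(0,t)-I^\xi(e,t)|$ at level $N\asymp\sqrt{T}$ and applying Cauchy--Schwarz (this is the computation \eqref{U1}--\eqref{E1lowbd} from the proof of Theorem~\ref{th3}(i)), whereas your H\"older step $\EE(Y_t^2)\le\EE(|Y_t|)^{2/3}\EE(Y_t^4)^{1/3}$ reaches the same $\EE(|Y_t|)\gtrsim t^{1/2}$ more directly. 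For $\bar E_4(T)$ the paper simply says the leading terms are of the form $Mp_a(0,0)p_b(0,0)$ and integrate to $O(T^2)$, leaving the details to the reader; your cumulant decomposition is a clean way to organise exactly that computation, and your graphical-representation sketch for the SEP connected part matches what the paper has in mind.

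One point worth keeping: you correctly identify that the $O(T^2)$ bound on $\bar E_4(T)$ uses $G_d<\infty$, i.e., transience of the underlying walk. The paper does not state this explicitly, but the integration of the product terms $p_a(0,0)p_b(0,0)$ to order $T^2$ indeed relies on it (in $d=1,2$ one has $\EE(I^\xi(0,T)^2)^2\gg T^2$, so the pairing part alone already violates the bound). Your final remark that \eqref{E1lim} nonetheless holds in every dimension is plausible but is not established by your argument, since your derivation of \eqref{E1lim} passes through the fourth-moment bound; if you want to assert it, you would need a separate lower bound on $\EE(|Y_t|)$ in the recurrent case.
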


\begin{theorem}
\label{th6}
For ISRW in the strongly catalytic regime, $\lambda_0(\kappa)<\lambda_1(\kappa)$ for all 
$\kappa\in[0,\infty)$.
\end{theorem}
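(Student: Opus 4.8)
Since $\lambda_1\equiv\infty$ throughout the strongly catalytic regime (part of the assertion, recalled below), the theorem is equivalent to the conjunction of (a) $\lambda_1(\kappa)=\infty$ and (b) $\lambda_0(\kappa)<\infty$, for every $\kappa\in[0,\infty)$. Part (b) is immediate from Theorem~\ref{th1} (equivalently Theorem~\ref{th2}(i)): ISRW in Poisson equilibrium is stationary and ergodic under space--time translations, so the only hypothesis of Theorem~\ref{th1} that is not obvious is (\ref{lambdabd}), i.e.\ $\EE(\log u(0,t))\le ct$, which I take up at the end.

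For part (a) the plan is to bound the first moment from below by localization. Restricting $X^\kappa$ in (\ref{lyapdef1}) to make no jump before time $t$ costs a factor $e^{-2d\kappa t}$ and forces $X^\kappa(t)=0$, so $u(0,t)\ge e^{-2d\kappa t}\exp[\gamma\int_0^t\xi(0,s)\,ds]$. Decomposing the ISRW catalyst into its individual particles — a Poisson cloud of density $\rho$ of independently moving rate-$1$ simple random walks — and writing $\ell^t(x)=\int_0^t 1_{\{S(s)=0\}}\,ds$ for the occupation time of the origin up to time $t$ by such a walk $S$ started at $x$, the Poisson exponential formula gives
\[
\EE\big(u(0,t)\big)\ \ge\ e^{-2d\kappa t}\exp\!\Big[\rho\sum_{x\in\Z^d}\Big(\ES_x\big(e^{\gamma\ell^t(x)}\big)-1\Big)\Big].
\]
In the strongly catalytic regime the bracketed sum is superlinear in $t$: the exponential moments of the occupation times blow up (when $d\ge3$ and $\gamma G_d\ge1$ already $\ES_0(e^{\gamma\ell^\infty(0)})=\infty$, and when $d\le2$ one has $\ell^t(0)\uparrow\infty$), and summing the per-site contributions over the $\Theta(t^{d/2})$ sites $x$ with $|x|\le\epsilon\sqrt t$ — each of which reaches $0$ well before time $t$ and thereafter behaves as a walk from $0$ — produces a superlinear total. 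Hence $\tfrac1t\log\EE(u(0,t))\to\infty$, i.e.\ $\lambda_1(\kappa)=\infty$, which with (b) yields $\lambda_0(\kappa)<\lambda_1(\kappa)$ for all $\kappa\in[0,\infty)$.

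It remains to prove (\ref{lambdabd}) for ISRW; this is the genuinely substantive point, and it does \emph{not} follow from the superadditivity of $t\mapsto\log u(0,t)$, which only bounds $\lambda_0$ from below. The plan is to show that no realization of $X^\kappa$ can over-harvest the catalyst: there are $C,c\in(0,\infty)$ such that, outside a $\PP$-event of probability at most $e^{-ct}$, the bound $\int_0^t\xi(X^\kappa(s),s)\,ds\le Ct$ holds simultaneously for every trajectory confined to a box of radius $Ct$. The underlying picture is that the divergence driving part (a) is an inherently long-time, large-volume phenomenon that is absent at the trajectory level: a pile of $m$ particles at a given space--time point has spatial density only of order $e^{-cm}$ and persists for time of order $1$, and such large piles are too sparse for a nearest-neighbour trajectory — which moves $O(1)$ per unit time, apart from rare long excursions — to chain more than $O(1)$ of them per unit time. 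Converting this into the \emph{linear} bound $Ct$, rather than the $Ct\log t$ that a sojourn-by-sojourn Cauchy--Schwarz over the successive visits of $X^\kappa$ would give, requires handling the many small sojourn contributions jointly and using occupation-time tail estimates for ISRW uniformly over all spatial scales; this is the step I expect to be the main obstacle. Combined with the bound $e^{-ct}$ on the probability that $X^\kappa$ leaves such a box (and a crude estimate on the contribution of those excursions), it gives $\EE(\log u(0,t))\le ct$, hence $\lambda_0(\kappa)<\infty$ by Theorem~\ref{th1}.
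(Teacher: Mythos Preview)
Your decomposition into (a) $\lambda_1(\kappa)=\infty$ and (b) $\lambda_0(\kappa)<\infty$ is exactly the paper's proof, and the paper dispatches both parts in one line by citation: (a) is part of the classification of the strongly catalytic regime for ISRW established in G\"artner and den Hollander~\cite{garhol06} (summarized in Section~\ref{S1.3.2} and Fig.~\ref{fig-lambda03}), and (b) is Kesten and Sidoravicius~\cite{kessid03}, Theorem~2, which is precisely the input that verifies condition~(\ref{lambdabd}) for ISRW (cf.\ the remark in Section~\ref{S1.5}, item~3) and hence feeds into Theorem~\ref{th1}. So at the level of the theorem as stated, there is nothing more to do than invoke these two results.

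What you have written beyond this is a sketch of the \emph{contents} of those two references. Your outline for (a) --- freeze $X^\kappa$, apply the Poisson exponential formula, and argue that $\sum_x(\ES_x(e^{\gamma\ell^t})-1)$ is superlinear in $t$ via the divergence of the exponential moment of the local time when $d\le 2$ or $\gamma G_d\ge 1$ --- is indeed the mechanism behind the $\lambda_1\equiv\infty$ part of \cite{garhol06}; the informal step ``sum over $|x|\le\epsilon\sqrt t$ sites that reach $0$ early'' would need to be made quantitative but is on the right track. Your outline for (b) correctly recognizes that (\ref{lambdabd}) is the entire substance and that it does not follow from soft arguments; the picture you describe --- uniform-in-path control of $\int_0^t\xi(X^\kappa(s),s)\,ds$ via rarity and short persistence of high piles, with the delicate point being to avoid the extra $\log t$ from a naive union over sojourns --- is an accurate summary of what \cite{kessid03} actually does, though carrying it out rigorously is a substantial paper in its own right and is well beyond what the present theorem requires.
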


Theorems~\ref{th1}--\ref{th3} wil be proved in Section~\ref{S2}, Theorems~\ref{th4}--\ref{th6} 
in Section~\ref{S3}.  

\noindent
\emph{Note:}  Theorem \ref{th4} extends to voter models that are non necessarily 
symmetric (see Section \ref{S3.1}).


\subsection{Discussion and open problems}
\label{S1.5} 

{\bf 1.} Fig.~\ref{fig-lambda06} graphically summarizes the results in Theorems~\ref{th1}--\ref{th3} 
and what we expect to be provable with al little more effort. The main message of this figure is 
that the \emph{qualitative} behavior of $\kappa\mapsto\lambda_0(\kappa)$ is well understood, 
including the logarithmic singularity at $\kappa=0$. Note that Theorems~\ref{th2} and \ref{th3}(i) 
do not imply continuity at $\kappa=0$, while Theorems~\ref{th3}(ii--iii) do. 

\begin{figure}[htbp]
\vspace{1cm}
\begin{center}
\setlength{\unitlength}{0.15cm}
\begin{picture}(20,12)(0,-2)
\put(0,0){\line(18,0){22}}
\put(0,0){\line(0,15){15}}
{\thicklines
\qbezier(0,2)(.4,8.8)(3,8)
\qbezier(3,8)(3.8,7.8)(5,6.6)
\qbezier(5,6.6)(9,2.6)(21,2.3)
}
\qbezier[60](0,2)(9,2)(21,2)
\put(-1.4,-1.2){$0$}
\put(-3.2,1.7){$\rho\gamma$}
\put(0,2){\circle*{.35}}
\put(23,-0.3){$\kappa$}
\put(-1.5,16){$\lambda_0(\kappa)$}
\end{picture}
\caption{\small Conjectured behavior of the quenched Lyapunov exponent.}
\label{fig-lambda06}
\end{center}
\end{figure}
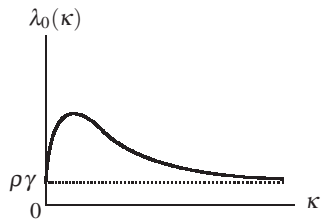

\medskip\noindent
{\bf 2.} 
Figs.~\ref{fig-lambda07}--\ref{fig-lambda09} summarize how we expect 
$\kappa\mapsto\lambda_0(\kappa)$ to compare with $\kappa\mapsto\lambda_1
(\kappa)$ for the three dynamics.

\begin{figure}[htbp]
\vspace{1.3cm}
\begin{center}

\setlength{\unitlength}{0.15cm}
\begin{picture}(20,12)(0,-2)
\put(0,0){\line(18,0){22}}
\put(0,0){\line(0,15){15}}
{\thicklines
\qbezier(0,2)(.4,8.8)(3,8)
\qbezier(3,8)(3.8,7.8)(5,6.6)
\qbezier(5,6.6)(9,2.6)(21,2.3)
\qbezier(0,12)(10,12)(21,12)
}
\qbezier[60](0,2)(9,2)(21,2)
\put(-1.4,-1.2){$0$}
\put(-4.6,1.7){{\scriptsize$p=0$}}
\put(-4.6,12){{\scriptsize$p=1$}}
\put(0,2){\circle*{.35}}
\put(23,-0.3){$\kappa$}
\put(-1.5,16){$\lambda_0(\kappa)$}
\end{picture}
\caption{\small Conjectured behavior for ISRW, SEP and SVM below the 
critical dimension.}
\label{fig-lambda07}
\end{center}
\end{figure}

\begin{figure}[htbp]
\vspace{1.3cm}
\begin{center}
\setlength{\unitlength}{0.15cm}
\begin{picture}(20,12)(-18,-2)
\put(-16,0){\line(18,0){22}}
\put(-16,0){\line(0,16){16}}
{\thicklines
\qbezier(-16,2)(-15.6,8.8)(-13,8)
\qbezier(-13,8)(-12.2,7.8)(-11,6.6)
\qbezier(-11,6.6)(-7,2.6)(5,2.3)
}
{\thicklines
\qbezier(-16,14)(-10,3.2)(5,2.8)
}
\qbezier[60](-16,2)(-7,2)(5,2)
\put(-17.6,-1.2){$0$}
\put(-20.5,1.7){{\scriptsize$p=0$}}
\put(-20.5,13.7){{\scriptsize$p=1$}}
\put(-16,2){\circle*{.35}}
\put(-16,14){\circle*{.35}}
\put(7,-0.3){$\kappa$}
\put(-17.5,17){$\lambda_p(\kappa)$}
\put(-10,14){$d=3$ \,{\small ISRW, SEP}}
\put(-10,12.1){$d=5$ \,{\small SVM}}
\end{picture}
\caption{\small Conjectured behavior for ISRW, SEP and SVM at the 
critical dimension.}
\label{fig-lambda08}
\end{center}
\end{figure}

\begin{figure}[htbp]
\vspace{1.3cm}
\begin{center}
\setlength{\unitlength}{0.15cm}
\begin{picture}(20,12)(12,-2)
\put(14,0){\line(18,0){22}}
\put(14,0){\line(0,16){16}}
{\thicklines
\qbezier(14,2)(14.4,8.8)(17,8)
\qbezier(17,8)(17.8,7.8)(19,6.6)
\qbezier(19,6.6)(23,2.6)(35,2.3)
}
{\thicklines
\qbezier(14,14)(20,2.6)(35,2.3)
}
\qbezier[60](14,2)(23,2)(35,2)
\put(12.6,-1.2){$0$}
\put(9.5,1.7){{\scriptsize$p=0$}}
\put(9.5,13.7){{\scriptsize$p=1$}}
\put(31.5,-1.4){$\kappa_0$}
\qbezier[20](32,0)(32,1)(32,2.5)
\put(14,2){\circle*{.35}}
\put(14,14){\circle*{.35}}
\put(37,-0.3){$\kappa$}
\put(12.5,17){$\lambda_p(\kappa)$}
\put(20,14){$d\geq 4$ \,{\small ISRW, SEP}}
\put(20,12.1){$d\geq 6$ \,{\small SVM}}
\end{picture}
\caption{\small Conjectured behavior for ISRW, SEP and SVM above the 
critical dimension.}
\label{fig-lambda09}
\end{center}
\end{figure}
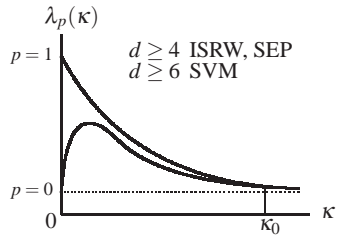

\medskip\noindent
{\bf 3.} Conditions (\ref{staterg}--\ref{lambdabd}) are trivially satisfied 
for SEP and SVM, because $\xi$ is bounded. For ISRW they follow from Kesten
and Sidoravicius~\cite{kessid03}, Theorem 2.

\medskip\noindent
{\bf 4.} Conditions (\ref{E1lim}--\ref{E2E4scal}) are \emph{weak} while conditions 
(\ref{expdevweak}--\ref{expdev}) are \emph{strong}. Theorem~\ref{th5} states that
conditions (\ref{E1lim}--\ref{E2E4scal}) are satisfied for ISRW and SEP. We will
see in Section~\ref{S3.2} that, most likely, they are satisfied for SVM as well. 
Conditions (\ref{expdevweak}--\ref{expdev}) fail for the three dynamics, but are 
satisfied e.g.\ for spin-flip dynamics in the so-called ``$M<\epsilon$ regime'' 
(see Liggett~\cite{lig85}, Section I.3). [The verification of this statement is 
left to the reader.] 

\medskip\noindent
{\bf 5.} The following problems remain open:

\begin{itemize}
\item
Extend Theorem~\ref{th1} to the initial condition $u(\cdot,0)\equiv 1$,
and show that $\lambda_0$ is the same as for the initial condition $u(\cdot,0)
= \delta_0(\cdot)$ assumed in (\ref{ic}). [The proof of Theorem~\ref{th1} in
Section~\ref{S2.1} shows that it is straightforward to do this extension for 
$u(\cdot,0)$ symmetric with bounded support. Recall the remark made prior to
(\ref{lyapdef2}).]
\item
Prove that $\lim_{\kappa\downarrow 0} \lambda_0(\kappa)=\rho\gamma$ and 
$\lim_{\kappa\to\infty} \lambda_0(\kappa)=\rho\gamma$ under conditions
(\ref{staterg}--\ref{lambdabd}) alone. [These limits correspond to time 
ergodicity, respectively, space ergodicity of $\xi$, but are non-trivial 
because they require some control on the fluctuations of $\xi$.]
\item
Prove Theorems~\ref{th2}(ii--iii) without the boundedness assumptions on
$\xi$. Prove Theorem~\ref{th3}(i) under condition (\ref{E1lim}) alone.
[The proof of Theorem~\ref{th2}(iii) in Section~\ref{S2.3} shows that 
$\lambda_0(\kappa)-\rho\gamma$ stays above any positive power of $\kappa$ 
as $\kappa\downarrow 0$.] Improve Theorems~\ref{th3}(ii--iii) by establishing 
under what conditions the upper bounds in (\ref{slopealt}--\ref{cont}) can
be made to match the lower bound in (\ref{slope}). 
\item
Extend Theorems~\ref{th4}--\ref{th6} by proving the qualitative behavior 
for the three dynamics conjectured in Figs.~\ref{fig-lambda07}--\ref{fig-lambda09}. 
[For white noise dynamics the curves successively merge for all $d\geq 3$ (see 
Figs.~\ref{fig-lambda01}--\ref{fig-lambda02}).]
\item
For the three dynamics in the weakly catalytic regime, find the asymptotics of 
$\lambda_0(\kappa)$ as $\kappa\to\infty$ and compare this with the asymptotics 
of $\lambda_p(\kappa)$, $p\in\N$, as $\kappa\to\infty$ 
(see Figs.~{\rm \ref{fig-lambda04}}--{\rm \ref{fig-lambda05}}).
\item
Extend the existence of $\lambda_p$ to all (non-integer) $p>0$, and prove that 
$\lambda_p\da\lambda_{0}$ as $p\da 0$. [For white noise dynamics this extension 
is achieved in (\ref{plamlim}).]
\end{itemize}


\section{Proof of Theorems \ref{th1}--\ref{th3}}
\label{S2}

The proofs of Theorems~\ref{th1}--\ref{th3} are given in Sections~\ref{S2.1}, 
\ref{S2.4}--\ref{S2.3} and \ref{S2.5}--\ref{S2.7}, respectively. W.l.o.g.\ we 
may assume that $\rho=\EE(\xi(0,0))=0$, by the remark made prior to conditions 
(\ref{staterg}--\ref{lambdabd}). 


\subsection{Proof of Theorem \ref{th1}}
\label{S2.1}

\begin{proof}
Recall (\ref{ic}) and (\ref{lyapdef2}--\ref{lyapdef3}), abbreviate
\begin{equation}
\label{subad3}
\chi(s,t)=\ES_{0}\bigg(\exp\bigg[\gamma\int_0^{t-s} \xi\big(X^\kappa(v),s+v\big)\,dv\bigg]
\delta_0(X^\kappa(t-s))\bigg),
\qquad 0\leq s\leq t<\infty,
\end{equation}
and note that $\chi(0,t)\stackrel{\PP}{=}u(0,t)$. Picking $u \in [s,t]$, inserting $\delta_0
(X^\kappa(u-s))$ under the expectation and using the Markov property of $X^\kappa$ at time 
$u-s$, we obtain
\begin{equation}
\label{subad5}
\chi(s,t)\geq \chi(s,u)\, \chi(u,t),
\qquad 0\leq s\leq u\leq t<\infty.
\end{equation}
Thus, $(s,t)\mapsto \log\chi(s,t)$ is superadditive. By condition (\ref{staterg}), the law 
of $\{\chi(u+s,u+t)\colon 0\leq s\leq t<\infty\}$ is the same for all $u\geq 0$. Therefore
the superadditive ergodic theorem (see Kingman \cite{kin76}) implies that
\begin{equation}
\label{subad7}
\lambda_{0}=\lim_{t\to\infty}\frac1t\log\chi(0,t)
\text{ exists } \PP\text{-a.s.\ and in } \PP\text{-mean}. 
\end{equation}
We saw at the end of Section~\ref{S1.2} that $\lambda_0 \in [0,\infty)$ (because
$\rho=0$). 
\qed
\end{proof}


\subsection{Proof of Theorem \ref{th2}(i)}
\label{S2.4}

\begin{proof}
The fact that $\lambda_0(0)=0$ is immediate from (\ref{lyapdef2}--\ref{lyapdef3}) because 
$\PS_0(X^0(t)=0)=1$ for all $t\geq 0$ and $\int_0^t \xi(0,s)\,ds = o(t)$ $\xi$-a.s.\ as 
$t\to\infty$ by the ergodic theorem (recall condition (\ref{staterg})). We already know
that $\lambda_0(\kappa) \in [0,\infty)$ for all $\kappa\in [0,\infty)$. The proof of the 
strict lower bound $\lambda_0(\kappa)>0$ for $\kappa\in (0,\infty)$ comes in 2 steps.

\medskip\noindent
{\bf 1.} 
Fix $T>0$ and consider the expression
\begin{equation}
\label{qLyaplb3}
\lambda_0=\lim_{n\to\infty}\frac{1}{nT}\,\EE\big(\log u(0,nT)\big)
\qquad \xi\text{-a.s.}
\end{equation}
Partition the time interval $[0,nT)$ into $n$ pieces of length $T$,
\begin{equation}
\label{Ijdef}
\cI_j=[(j-1)T,jT), \qquad j=1,\ldots,n.
\end{equation} 
Use the Markov property of $X^\kappa$ at the beginning of each piece, to obtain
\begin{equation}
\label{qLyaplb5}
\begin{aligned}
&u(0,nT)\\
&=\ES_{\,0}\bigg(\exp\bigg[\gamma\sum_{j=1}^n \int_{\cI_j} \xi\big(X^\kappa(s),s\big)\,ds\bigg]
\delta_{0}(X^\kappa(nT))\bigg)\\
&=\sum_{x_1,\ldots,x_{n-1}\in\Z^d} \prod_{j=1}^n \ES_{\,x_{j-1}}
\bigg(\exp\bigg[\gamma\int_0^T \xi\big(X^\kappa(s),(j-1)T+s\big)\,ds\bigg]
\delta_{x_j}(X^\kappa(T))\bigg)
\end{aligned}
\end{equation}
with $x_0=x_n=0$. Next, for $x,y\in\Z^d$, let $\ES_{\,x,y}^{(T)}$ denote the 
conditional expectation over $X^\kappa$ given that $X^\kappa(0)=x$ and $X^\kappa(T)=y$, 
and abbreviate, for $1 \leq j \leq n$,
\begin{equation}
\label{ESxyrep}
\ES_{x,y}^{(T)}(j) = \ES_{\,x,y}^{(T)}
\bigg(\exp\bigg[\gamma\int_0^T \xi\big(X^\kappa(s),(j-1)T+s\big)\,ds\bigg]\bigg).
\end{equation}
Then we can write
\begin{equation}
\label{qLyaplb7}
\begin{aligned}
&\ES_{\,x_{j-1}}\bigg(
\exp\bigg[\gamma\int_0^T \xi\big(X^\kappa(s),(j-1)T+s\big)\,ds\bigg]
\delta_{x_j}(X^\kappa(T))\bigg)\\
&\qquad = p^\kappa_{T}(x_j-x_{j-1})\,\ES_{\,x_{j-1},x_j}^{(T)}(j),
\end{aligned}
\end{equation}
where we abbreviate $p^\kappa_T(x)=\PP_0(X^\kappa(T)=x)$, $x\in\Z^d$. Combined with 
(\ref{qLyaplb5}), this gives
\begin{equation}
\label{qLyaplb9}
\begin{aligned}
u(0,nT)
&= \sum_{x_{1},\cdots,x_{n-1}\in\Z^d}
\Bigg(\prod_{j=1}^n p^\kappa_{T}(x_j-x_{j-1})\Bigg)\,\,
\Bigg(\prod_{j=1}^n \ES_{\,x_{j-1},x_j}^{(T)}(j)\Bigg)\\
&= p^\kappa_{nT}(0)\,\ES_{\,0,0}^{(nT)}
\Bigg(\prod_{j=1}^n \ES_{\,X^\kappa((j-1)T),X^\kappa(jT)}^{(T)}(j)\Bigg).
\end{aligned}
\end{equation}

\medskip\noindent
{\bf 2.}
To estimate the last expectation in (\ref{qLyaplb9}), abbreviate $\xi_{I}
=(\xi_{t})_{t\in I}$, $I\subset [0,\infty)$, and apply Jensen's inequality
to (\ref{ESxyrep}), to obtain
\begin{equation}
\label{qLyaplb11}
\ES_{\,x,y}^{(T)}(j)
= \exp\bigg[\gamma\int_0^T 
\ES_{\,x,y}^{(T)}\Big(\xi\big(X^\kappa(s),(j-1)T+s\big)\Big)\,ds
+ C_{x,y}\big(\xi_{[(j-1)T,jT]},T\big)\bigg]
\end{equation}
for some $C_{x,y}(\xi_{[(j-1)T,jT]},T)$ that satisfies
\begin{equation}
\label{qLyaplb13}
C_{x,y}\big(\xi_{[(j-1)T,jT]},T\big) > 0 \quad \xi\text{-a.s.} \qquad 
\forall\,x,y\in\Z^d,\,1\leq j\leq n.
\end{equation}
Here, the strict positivity is an immediate consequence of the fact that $\xi$ is not
constant (recall condition (\ref{staterg})) and $u \mapsto e^u$ is strictly convex. 
Combining (\ref{qLyaplb9}--\ref{qLyaplb11}) and using Jensen's inequality again, 
this time w.r.t.\ $\ES_{\,0,0}^{(nT)}$, we obtain
\begin{equation}
\label{qLyaplb15}
\begin{aligned}
&\EE\Big(\log u(0,nT)\Big)\\
&\geq\log p^\kappa_{nT}(0)\\
&\qquad +\EE\Bigg(\ES_{\,0,0}^{(nT)}
\Bigg(\sum_{j=1}^n \ES_{\,X^\kappa((j-1)T),X^\kappa(jT)}^{(T)}
\bigg(\gamma\int_0^T \xi\big(X^\kappa(s),(j-1)T+s\big)\,ds\\
&\qquad\qquad\qquad\qquad\qquad
+C_{X^\kappa((j-1)T),X^\kappa(jT)}\big(\xi_{[(j-1)T,jT]},T\big)\bigg)\Bigg)\Bigg)\\
&=\log p^\kappa_{nT}(0)\\ 
&\qquad + \EE\Bigg(\ES_{\,0,0}^{(nT)}
\Bigg(\sum_{j=1}^n \ES_{\,X^\kappa((j-1)T),X^\kappa(jT)}^{(T)}
\Big(C_{X^\kappa((j-1)T),X^\kappa(jT)}\big(\xi_{[(j-1)T,jT]},T\big)\Big)\Bigg)\Bigg),
\end{aligned}
\end{equation}
where the middle term in the second line vanishes because of condition (\ref{staterg})
and our assumption that $\EE(\xi(0,0))=0$. After inserting the indicator of the event 
$\{X^\kappa((j-1)T)=X^\kappa(jT)\}$ for $1\leq j\leq n$ in the last expectation in 
(\ref{qLyaplb15}), we get
\begin{eqnarray}
\label{qLyaplb17}
&&\EE\Bigg(\ES_{\,0,0}^{(nT)}
\Bigg(\sum_{j=1}^n \ES_{\,X^\kappa((j-1)T),X^\kappa(jT)}^{(T)}
\Big(C_{X^\kappa((j-1)T),X^\kappa(jT)}\big(\xi_{[(j-1)T,jT]},T\big)\Big)\Bigg)\Bigg)\nonumber\\
&&\quad\geq 
\sum_{j=1}^n \sum_{z\in\Z^d}
\frac{p^\kappa_{(j-1)T}(z)\,p^\kappa_{T}(0)\,p^\kappa_{(n-j)T}(z)}{p^\kappa_{nT}(0)}\,
\EE\Big(C_{z,z}\big(\xi_{[(j-1)T,jT]},T\big)\Big)\nonumber\\
&&\quad\geq n\,C_T\,p^\kappa_T(0),
\end{eqnarray}
where we abbreviate
\begin{equation}
\label{qLyaplb19}
C_T = \EE\Big(C_{z,z}\big(\xi_{[(j-1)T,jT]},T\big)\Big)>0,
\end{equation}
note that the latter does not depend on $j$ or $z$, and use that $\sum_{z\in\Z^d} 
p^\kappa_{(j-1)T}(z)p^\kappa_{(n-j)T}(z)$ $= p^\kappa_{(n-1)T}(0) \geq p^\kappa_{nT}(0)$. 
Therefore, combining (\ref{qLyaplb3}) and (\ref{qLyaplb15}--\ref{qLyaplb19}), and 
using that 
\begin{equation}
\label{qLyaplb21}
\lim_{n\to\infty} \frac{1}{nT} \log p^\kappa_{nT}(0)=0,
\end{equation}
we arrive at $\lambda_0 \geq (C_T/T)p^\kappa_T(0)>0$. 
\qed
\end{proof}


\subsection{Proof of Theorem \ref{th2}(ii)}
\label{S2.2}

\begin{proof}
In Step 1 we prove the Lischitz continuity outside any neighborhood of $0$
under the restriction that $\xi\leq 1$. This proof is essentially a copy of 
the proof in G\"artner, den Hollander and Maillard~\cite{garholmai10} of the 
Lipschitz continuity of the annealed Lyapunov exponents when $\xi$ is SVM. In 
Step 2 we explain how to remove the restriction $\xi\leq 1$. In Step 3 we show 
that the Lipschitz constant tends to zero as $\kappa\to\infty$ when $\xi\leq 1$.

\medskip\noindent
{\bf 1.} 
Pick $\kappa_1,\kappa_2\in (0,\infty)$ with $\kappa_1<\kappa_2$ arbitrarily.
By Girsanov's formula, 
\begin{eqnarray}
\label{Gir}
&&\ES_{\,0}\bigg(\exp\bigg[\gamma\int_0^t \xi(X^{\kappa_2}(s),s)\,ds
\bigg]\,\delta_0(X^{\kappa_2}(t))\bigg)\nonumber\\
&&\quad= \ES_{\,0}
\bigg(\exp\bigg[\gamma\int_0^t \xi(X^{\kappa_1}(s),s)\,ds\bigg]\,
\delta_0(X^{\kappa_1}(t))\nonumber\\
&&\qquad\qquad\times
\exp\Big[J(X^{\kappa_1};t)\log(\kappa_2/\kappa_1)-2d(\kappa_2-\kappa_1)t
\Big]\bigg)\nonumber\\
&&\quad= I+II,
\end{eqnarray}
where $J(X^{\kappa_1};t)$ is the number of jumps of $X^{\kappa_1}$ up to time $t$,
$I$ and $II$ are the contributions coming from the events $\{J(X^{\kappa_1};t)\leq
M2d\kappa_2t\}$, respectively, $\{J(X^{\kappa_1};t)> M2d\kappa_2t\}$, with $M>1$ 
to be chosen. Clearly,
\begin{equation}
\label{Iest}
\begin{aligned}
I &\leq \exp\Big[\Big(M2d\kappa_2\log(\kappa_2/\kappa_1)
-2d(\kappa_2-\kappa_1)\Big)t\Big]\\
&\qquad \times \ES_{\,0}\bigg(\exp\bigg[\gamma\int_0^t \xi(X^{\kappa_1}(s),s)\,ds\bigg]\,
\delta_0(X^{\kappa_1}(t))\bigg),
\end{aligned}
\end{equation}
while
\begin{equation}
\label{IIest}
II \leq e^{\gamma t}\,\PS_{0}\Big(J(X^{\kappa_2};t)>M2d\kappa_2t\Big)
\end{equation}
because we may estimate 
\begin{equation}
\label{intxibd}
\int_0^t \xi(X^{\kappa_1}(s),s)\,ds \leq t 
\end{equation}
and 
afterwards use Girsanov's formula in the reverse direction. Since 
$J(X^{\kappa_2};t)=J^*(2d\kappa_2t)$ with $(J^*(t))_{t\geq 0}$ a rate-$1$ Poisson 
process, we have
\begin{equation}
\label{LDPPoi}
\lim_{t\to\infty} \frac{1}{t} \log \PS_{0}\Big(J(X^{\kappa_2};t)>M2d\kappa_2t\Big)
= -2d\kappa_2 \cI(M)
\end{equation}
with
\begin{equation}
\label{IMid}
\cI(M) = \sup_{u\in\R} \big[Mu-\big(e^u-1\big)\big] = M\log M-M+1.
\end{equation}
Recalling (\ref{lyapdef2}--\ref{lyapdef3}), we get from (\ref{Gir}--\ref{LDPPoi})
the upper bound
\begin{equation}
\label{lk2lk1bd}
\lambda_0(\kappa_2) \leq
\big[M2d\kappa_2\log(\kappa_2/\kappa_1)
-2d(\kappa_2-\kappa_1)+\lambda_0(\kappa_1)\big]
\vee \big[\gamma-2d\kappa_2\cI(M)\big].
\end{equation}
On the other hand, estimating $J(X^{\kappa_1};t)\geq 0$ in (\ref{Gir}),
we have
\begin{eqnarray}
\label{Jt0}
&&\ES_{\,0}\bigg(\exp\bigg[\gamma\int_0^t \xi(X^{\kappa_2}(s),s)\,ds\bigg]\,
\delta_0(X^{\kappa_2}(t))\bigg)\nonumber\\
&&\qquad\geq  \exp[-2d(\kappa_2-\kappa_1)t]\,\,
\ES_{\,0}\bigg(\exp\bigg[\gamma\int_0^t 
\xi(X^{\kappa_1}(s),s)\,ds\bigg]\,\delta_0(X^{\kappa_1}(t))\bigg),
\end{eqnarray}
which gives the lower bound
\begin{equation}
\label{lk2lk1lb}
\lambda_0(\kappa_2) \geq -2d(\kappa_2-\kappa_1) + \lambda_0(\kappa_1). 
\end{equation}
Next, for $\kappa \in (0,\infty)$, define
\begin{eqnarray}
\label{derdefs}
D^+\lambda_0(\kappa) 
&=& \limsup_{\epsilon \to 0}\,
\epsilon^{-1} [\lambda_0(\kappa+\epsilon)-\lambda_0(\kappa)],\nonumber\\
D^-\lambda_0(\kappa) 
&=& \liminf_{\epsilon \to 0}\,
\epsilon^{-1} [\lambda_0(\kappa+\epsilon)-\lambda_0(\kappa)],
\end{eqnarray}
where $\epsilon \to 0$ from both sides. Then, in (\ref{lk2lk1bd}) and (\ref{lk2lk1lb}), 
picking $\kappa_1=\kappa$ and $\kappa_2=\kappa+\delta$, respectively, $\kappa_1
=\kappa-\delta$ and $\kappa_2=\kappa$ with $\delta>0$ and letting $\delta
\downarrow 0$, we get
\begin{equation}
\label{D1+ublb}
\begin{aligned}
&D^+\lambda_0(\kappa) \leq (M-1)2d \qquad \forall\,M>1\colon\,
2d\kappa \cI(M)-\gamma \geq 0,\\
&D^-\lambda_0(\kappa) \geq -2d.
\end{aligned}
\end{equation}
(The condition in the first line of (\ref{D1+ublb}) guarantees that the first term in 
the right-hand side of (\ref{lk2lk1bd}) is the maximum because $\lambda_0(\kappa)\geq 0$.)
Since $\lim_{M\to\infty} \cI(M)=\infty$, it follows from (\ref{D1+ublb}) that $D^+\lambda_0$ 
and $D^-\lambda_0$ are bounded outside any neighborhood of $\kappa=0$. 

\medskip\noindent
{\bf 2.} 
It remains to explain how to remove the restriction $\xi\leq 1$. Without this restriction
(\ref{intxibd}) is no longer true, but by the Cauchy-Schwarz inequality we have
\begin{equation}
\label{IICS}
II \leq III \times IV
\end{equation}
with 
\begin{equation}
\label{IIICS}
III= \bigg\{\ES_{\,0}
\bigg(\exp\bigg[2\gamma\int_0^t \xi(X^{\kappa_1}(s),s)\,ds\bigg]\,
\delta_0(X^{\kappa_1}(t))\bigg)\bigg\}^{1/2}
\end{equation}
and
\begin{eqnarray}
\label{IVCS}
IV &=& \bigg\{\ES_{\,0}\bigg(
\exp\Big[2J(X^{\kappa_1};t)\log(\kappa_2/\kappa_1)-4d(\kappa_2-\kappa_1)t\Big]\nonumber\\
&&\qquad\qquad\qquad\qquad\times
\one\{J(X^{\kappa_1};t)> M2d\kappa_2t\}\bigg)\bigg\}^{1/2}\nonumber\\
&=& \exp\Big[\Big(d\kappa_{1}-2d\kappa_{2}+d(\kappa_{2}^2/\kappa_{1})\Big)t\Big]\nonumber\\
&&\quad\times
\bigg\{\ES_{\,0}\bigg(
\exp\bigg[J(X^{\kappa_1};t)\log\bigg(\frac{\kappa_2^2/\kappa_1}{\kappa_{1}}\bigg)
-2d\big(\kappa_2^2/\kappa_{1}-\kappa_1\big)t\bigg]\nonumber\\
&&\qquad\qquad\qquad\qquad\times
\one\{J(X^{\kappa_1};t)> M2d\kappa_2t\}\bigg)\bigg\}^{1/2}\nonumber\\
&=& \exp\Big[\Big(d\kappa_{1}-2d\kappa_{2}+d(\kappa_{2}^2/\kappa_{1})\Big)t\Big]
\,\,\bigg\{\PS_{\,0}\bigg(J\Big(X^{\kappa_{2}^2/\kappa_1};t\Big)
> M2d\kappa_2t\bigg)\bigg\}^{1/2},\nonumber\\
&&
\end{eqnarray}
where in the last line we use Girsanov's formula in the reverse direction (without
$\xi$). By (\ref{lyapdef2}--\ref{lyapdef3}) and condition (\ref{lambdabd}), we have 
$III\leq e^{c_0t}$ $\xi$-a.s.\ for $t\geq 0$ and some $c_0<\infty$. Therefore, 
combining (\ref{IICS}--\ref{IVCS}), we get
\begin{equation}
\label{IIest*}
II\leq \exp\Big[\Big(c_{0}+d\kappa_{1}-2d\kappa_{2}+d(\kappa_{2}^2/\kappa_{1})\Big)t\Big]
\,\,\bigg\{\PS_{\,0}\bigg(J\Big(X^{\kappa_{2}^2/\kappa_1};t\Big)
> M2d\kappa_2t\bigg)\bigg\}^{1/2}
\end{equation}
instead of (\ref{IIest}). The rest of the proof goes along the same lines as in 
(\ref{LDPPoi}--\ref{D1+ublb}).

\medskip\noindent
{\bf 3.} Since $\cI(M)>0$ for all $M>1$, it follows from (\ref{D1+ublb}) that 
$\limsup_{\kappa\to\infty} D^+\lambda_0(\kappa) \leq 0$. To prove that 
$\liminf_{\kappa\to\infty} D^-\lambda_0(\kappa) \geq 0$, we argue as follows.
From (\ref{Gir}) with $\kappa_1=\kappa-\delta$ and $\kappa_2=\kappa$, we get
\begin{equation}
\label{basest}
\begin{aligned}
&\ES_{\,0}\bigg(\exp\bigg[\gamma\int_0^t \xi(X^{\kappa}(s),s)\,ds
\bigg]\,\delta_0(X^{\kappa}(t))\bigg)\\
&= \ES_{\,0}
\bigg(\exp\bigg[\gamma\int_0^t \xi(X^{\kappa-\delta}(s),s)\,ds\bigg]\,
\delta_0(X^{\kappa-\delta}(t))\\
&\qquad\qquad\times \exp\Big[J(X^{\kappa-\delta};t)
\log\big(\tfrac{\kappa}{\kappa-\delta}\big)-2d\delta t\Big]\bigg)\\
&\geq e^{-2d\delta t}\,\left[\ES_{\,0}\bigg(\exp\bigg[p\gamma\int_0^t 
\xi(X^{\kappa-\delta}(s),s)\,ds\bigg]\,
\delta_0(X^{\kappa-\delta}(t))\bigg)\right]^{1/p}\\
&\qquad\qquad\times 
\left[\ES_{\,0}
\bigg(\exp\Big[qJ(X^{\kappa-\delta};t)\
\log\big(\tfrac{\kappa}{\kappa-\delta}\big)\Big]\bigg)\right]^{1/q}\\
&= e^{-2d\delta t} \times I \times II,
\end{aligned}
\end{equation}
where we use the reverse H\"older inequality with $(1/p)+(1/q)=1$ and $-\infty<q<0<p<1$.
By direct computation, we have
\begin{equation}
\label{Jexp}
\ES_{\,0}
\bigg(\exp\Big[qJ(X^{\kappa-\delta};t)\
\log\big(\tfrac{\kappa}{\kappa-\delta}\big)\Big]\bigg)
= \exp\Big[-2d(\kappa-\delta)\big[1-(\tfrac{\kappa}{\kappa-\delta})^q] t\Big]
\end{equation}
and hence
\begin{equation}
\label{comp1}
\frac{1}{\delta t}\,\log\Big(e^{-2d\delta t} \times II\Big) 
= -2d - \frac{2d}{\delta q}(\kappa-\delta)\, 
\big[1-\big(\tfrac{\kappa}{\kappa-\delta}\big)^q\big].
\end{equation}
Moreover, with the help of the additional assumption that $\xi\leq 1$, we
can estimate
\begin{equation}
\label{comp2}
I \geq \exp\big[-\big(\tfrac{1-p}{p}\big)\gamma t\big]\,
\left[\ES_{\,0}\bigg(\exp\bigg[\gamma\int_0^t \xi(X^{\kappa-\delta}(s),s)\,ds
\bigg]\,\delta_0(X^{\kappa-\delta}(t))\bigg)\right]^{1/p}.
\end{equation}
Combining (\ref{basest}) and (\ref{comp1}--\ref{comp2}), we arrive at 
(insert $(1-p)/p=-1/q$)
\begin{equation}
\begin{aligned}
&\frac{1}{\delta t}\,
\bigg[\log\ES_{\,0}\bigg(\exp\bigg[\gamma\int_0^t \xi(X^{\kappa}(s),s)\,ds
\bigg]\,\delta_0(X^{\kappa}(t))\bigg)\\
&\qquad\qquad-\log\ES_{\,0}\bigg(\exp\bigg[\gamma\int_0^t \xi(X^{\kappa-\delta}(s),s)\,ds
\bigg]\,\delta_0(X^{\kappa-\delta}(t))\bigg)\bigg]\\
&\geq -2d - \frac{2d}{\delta q}(\kappa-\delta)\, 
\big[1-\big(\tfrac{\kappa}{\kappa-\delta}\big)^q\big] + \frac{\gamma}{\delta q}\\
&\qquad - \frac{1}{\delta q t}\, \log \ES_{\,0}\bigg(\exp\bigg[\gamma\int_0^t 
\xi(X^{\kappa-\delta}(s),s)\,ds\bigg]\,\delta_0(X^{\kappa-\delta}(t))\bigg).
\end{aligned}
\end{equation}
Let $t\to\infty$ to obtain
\begin{equation}
\frac{1}{\delta}[\lambda_0(\kappa)-\lambda_0(\kappa-\delta)]
\geq -2d - \frac{2d}{\delta q}(\kappa-\delta)\, 
\big[1-\big(\tfrac{\kappa}{\kappa-\delta}\big)^q\big] + \frac{1}{\delta q}\,
[\gamma-\lambda_0(\kappa-\delta)].
\end{equation}
Pick $q=-C/\delta$ with $C\in (0,\infty)$ and let $\delta\downarrow 0$, to obtain
\begin{equation}
D^-\lambda_0(\kappa) \geq -2d + \frac{2d\kappa}{C}\,\big(1-e^{-C/\kappa}\big) 
- \frac{1}{C}\,[\gamma-\lambda_0(\kappa)]. 
\end{equation}
Let $\kappa\to\infty$ and use that $\lambda_0(\kappa) \geq 0$, to obtain
\begin{equation}
\liminf_{\kappa\to\infty} D^-\lambda_0(\kappa) \geq -\frac{\gamma}{C}.
\end{equation}
Finally, let $C\to\infty$ to arrive at the claim.
\qed
\end{proof}


\subsection{Proof of Theorem \ref{th2}(iii)}
\label{S2.3}

\begin{proof}
Since $\xi$ is assumed to be bounded from below, we may take $\xi\geq -1$ w.l.o.g., because 
we can scale $\gamma$. The proof of Theorem~\ref{th2}(iii) is based on the following lemma 
providing a lower bound for $\lambda_0(\kappa)$ when $\kappa$ is small enough. Recall 
(\ref{Ixidef}), and abbreviate
\begin{equation}
\label{E1Tdef}
E_1(T) = \EE\big(|I^\xi(0,T)-I^\xi(e,T)|\big), \qquad T>0.
\end{equation} 

\begin{lemma}
\label{slopelwbdlem}
For $T \geq 1$ and $\kappa\downarrow 0$,
\begin{equation}
\label{slopelwbd1}
\lambda_0(\kappa) \geq -\gamma\,\tfrac{1}{T}-2d\kappa\,\tfrac{T-1}{T} 
+ [1+o_\kappa(1)]\,\tfrac{1}{T}\big[\tfrac{\gamma}{2} E_1(T-1)
-\log(1/\kappa)\big].
\end{equation}
\end{lemma}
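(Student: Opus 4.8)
The plan is to obtain a lower bound on $u(0,T)$ (and then, by subadditivity, on $\lambda_0(\kappa)$) by restricting the Feynman--Kac expectation to a single explicit strategy for the random walk $X^\kappa$. Recall from \eqref{lyapdef2} that
\begin{equation*}
u(0,T) = \ES_{\,0}\bigg(\exp\bigg[\gamma\int_0^T \xi\big(X^\kappa(s),s\big)\,ds\bigg]\,\delta_0(X^\kappa(T))\bigg),
\end{equation*}
so for any event $A$ in the path space of $X^\kappa$ contained in $\{X^\kappa(T)=0\}$ and any lower bound $\xi\geq -1$ we have, on $A$, $\int_0^T\xi(X^\kappa(s),s)\,ds \geq \int_0^T\xi(X^\kappa(s),s)\,ds$, and more usefully we can split the contribution into a ``good'' part where the walk sits at a favourable site and a crude $\geq -\gamma$ bound on the short time intervals where it moves. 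The natural strategy, given that we want $E_1(T-1)=\EE(|I^\xi(0,T-1)-I^\xi(e,T-1)|)$ to enter, is: in the time window $[1,T]$ the walk stays put at whichever of the two sites $0$ or $e$ has accumulated the larger value of $\gamma\int_1^{\cdot}\xi$, and in the two short windows $[0,1]$ and — if it chose $e$ — a final dash back to $0$ near time $T$, we simply pay the price $e^{-\gamma\cdot(\text{short time})}$ together with the random-walk cost of making the required jumps.

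More concretely, I would proceed in the following steps. First, fix the environment $\xi$ and define $\sigma=\sigma(\xi)\in\{0,e\}$ to be the site maximising $\int_1^{T}\xi(y,s)\,ds$ over $y\in\{0,e\}$ (equivalently, using $\rho=0$, maximising $I^\xi(y,T-1)$ shifted appropriately; the gap between the two is $\tfrac12|I^\xi(0,T-1)-I^\xi(e,T-1)|$ above their average, which is $o(T)$-a.s.\ zero in expectation). Second, lower-bound $u(0,T)$ by the event that $X^\kappa$ makes no jump during $[1,T-1]$ if $\sigma=0$, or jumps to $e$ just after time $1$ and back to $0$ just before time $T$ if $\sigma=e$ — in either case the probability of the relevant jump pattern for a rate-$2d\kappa$ walk is $e^{-2d\kappa(T-1)}$ times a polynomial-in-$\kappa$ factor of the form $\kappa^{O(1)}e^{o(1/\kappa)}$, which accounts for the $-\log(1/\kappa)$ term and the $o_\kappa(1)$ prefactor. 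Third, on that event bound the exponent below by $-\gamma$ (for the at most one-unit-of-time worth of ``bad'' intervals near the endpoints, using $\xi\geq-1$) plus $\gamma\int_1^{T}\xi(\sigma,s)\,ds \geq \tfrac\gamma2\big[I^\xi(0,T-1)+I^\xi(e,T-1)\big] + \tfrac\gamma2|I^\xi(0,T-1)-I^\xi(e,T-1)|$ up to the endpoint corrections. Fourth, take $\EE(\log(\cdot))$: the symmetric term $\tfrac\gamma2\EE(I^\xi(0,T-1)+I^\xi(e,T-1))=0$ by stationarity and $\rho=0$, the asymmetric term contributes $\tfrac\gamma2 E_1(T-1)$, and the random-walk cost contributes $-2d\kappa(T-1) - \log(1/\kappa)(1+o_\kappa(1)) - O(1)$; dividing by $T$ and using $\lambda_0 \geq \tfrac1T\EE(\log u(0,T))$ (which follows from the superadditivity established in the proof of Theorem~\ref{th1}, since $\tfrac1t\EE\log\chi(0,t)$ is nondecreasing along multiples of $T$ and converges to $\lambda_0$) gives \eqref{slopelwbd1}.

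The main obstacle I expect is the bookkeeping at the two endpoints: making precise the claim that confining the walk to $\{0,e\}$ except for $O(1)$ time near $0$ and near $T$, while forcing $X^\kappa(T)=0$, costs only $e^{-2d\kappa(T-1)}\kappa^{c}e^{o(1/\kappa)}$ in probability and only $e^{-\gamma\cdot O(1)}$ in the exponential weight, uniformly enough that the error is genuinely $[1+o_\kappa(1)]$ and not something that degrades the $\tfrac\gamma2 E_1(T-1)$ coefficient. One clean way is to demand: no jumps on $[1,T-1]$ if $\sigma=0$; if $\sigma=e$, exactly one jump in $[0,1]$ landing at $e$ (or starting the walk and jumping to $e$ within the first time unit) and exactly one jump in $[T-1,T]$ landing back at $0$. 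The probability of each such pattern is of order $e^{-2d\kappa(T-1)}(2d\kappa)^{k}$ with $k\in\{0,1,2\}$, whence $\tfrac1T\log(\text{prob}) = -2d\kappa\tfrac{T-1}{T} - \tfrac kT\log(1/\kappa) + o_\kappa(1)\cdot\tfrac1T$; absorbing the factor $k\leq 2$ is where one wants $e$ to be a \emph{nearest-neighbour} site so that a single jump suffices, and one should double-check that the bound $-\log(1/\kappa)$ rather than $-2\log(1/\kappa)$ in \eqref{slopelwbd1} is what the two-jump scenario actually yields (if not, the statement with a constant $2$ in front of $\log(1/\kappa)$ is what comes out, and one optimises the strategy — e.g.\ restrict to the case $\sigma=0$ with probability at least $\tfrac12$ after relabelling — to recover the stated constant). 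The remaining estimates — that $\int$ over the $O(1)$ endpoint windows is $\geq -\gamma\cdot O(1)$ by $\xi\geq -1$, and that the ergodic/stationarity cancellations hold — are routine.
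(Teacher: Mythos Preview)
Your strategy is the paper's: restrict $X^\kappa$ to sit at whichever of $\{0,e\}$ maximises the time-integral of $\xi$ over a long ``sitting'' interval, bound the short ``transition'' windows crudely via $\xi\geq -1$, and collect the jump cost $p_1^\kappa(e)\sim\kappa$ together with the identity $\EE(\max\{I^\xi(0,\cdot),I^\xi(e,\cdot)\})=\tfrac12 E_1(\cdot)$ (valid because both integrals have mean zero). The difference is in how the bound is passed to $\lambda_0$, and this difference is exactly the constant you worry about.

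You work with a single block $[0,T]$ and invoke $\lambda_0\geq\tfrac1T\EE(\log u(0,T))$ from superadditivity. The paper instead works with $u(0,nT+1)$, partitions $[0,nT+1)$ into $n$ sitting intervals $\cC_j=[(j-1)T+1,jT)$ of length $T-1$ and $n+1$ transition intervals $\cB_j=[(j-1)T,(j-1)T+1)$ of length $1$, forces $X^\kappa\equiv Z_j^\xi:=\mathrm{argmax}_{x\in\{0,e\}}\int_{\cC_j}\xi(x,s)\,ds$ on each $\cC_j$, and lets $n\to\infty$ using the ergodic theorem on $\sum_{j}\max\{I_j^\xi(0),I_j^\xi(e)\}$. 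This periodisation is what saves the constant: there are $n+1$ transition intervals, each costing $\geq p_1^\kappa(e)$, over total time $nT+1$, so the amortised cost is one $\log(1/\kappa)$ per $T$-block. In your single-block version the constraint $X^\kappa(T)=0$ forces \emph{two} jumps when $\sigma=e$ (out and back), giving $-\tfrac2T\log(1/\kappa)$. Your proposed fix --- restricting to $\sigma=0$ --- does not work: once you drop the option of sitting at $e$, the exponent is $\gamma\int\xi(0,s)\,ds$, whose expectation is zero, and the $\tfrac\gamma2 E_1(T-1)$ gain disappears entirely. The factor-$2$ version of the lemma would still suffice for both applications (Theorems~\ref{th2}(iii) and~\ref{th3}(i)), since one simply enlarges the free constant chosen afterwards; but to obtain \eqref{slopelwbd1} as written you need the multi-block argument.
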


\begin{proof}
The proof comes in 2 steps. Recall (\ref{ic}) and (\ref{lyapdef2}--\ref{lyapdef3}), 
and write
\begin{equation}
\label{lambdaTkappa}
\begin{aligned}
&\lambda_0(\kappa)\\
&=\lim_{n\to\infty} \frac{1}{nT+1}\log\ES_{\,0}\bigg(\exp\bigg[
\gamma \int_0^{nT+1} \xi(X^\kappa(s),s)\,ds\bigg]
\,\delta_0\big(X^\kappa(nT+1)\big)\bigg).
\end{aligned}
\end{equation}

\medskip\noindent
{\bf 1.}
Partition the time interval $[0,nT+1)$ as $[\cup_{j=1}^{n+1} \cB_j] \cup 
[\cup_{j=1}^n \cC_j]$ with
\begin{equation}
\label{BiCidef}
\begin{array}{lll}
\cB_j &= \big[(j-1)T,(j-1)T+1\big), &1 \leq j \leq n+1,\\[0.2cm]
\cC_j &= \big[(j-1)T+1,jT\big), &1 \leq j \leq n.
\end{array}
\end{equation}
Let
\begin{equation}
\label{Iidef}
I_j^\xi(x) = \int_{\cC_j} \xi(x,s)\,ds
\end{equation}
and
\begin{equation}
\label{Zidef}
Z_j^\xi = \mathrm{argmax}_{x\in\{0,e\}}\, I_j^\xi(x),
\end{equation}
and define the event
\begin{equation}
\label{Axidef}
A^\xi = \left[\,\bigcap_{j=1}^{n}\Big\{
X^\kappa(t) = Z_j^\xi\,\,\forall\,t \in \cC_j\Big\}\right] \cap
\{X^\kappa(nT+1)=0\}.
\end{equation}
We may estimate
\begin{equation}
\label{slopelwbd5}
\begin{aligned}
&\ES_{\,0}\bigg(\exp\bigg[
\gamma \int_0^{nT+1} \xi(X^\kappa(s),s)\,ds\bigg]\,
\delta_0\big(X^\kappa(nT+1)\big)\bigg)\\
&\qquad\geq \ES_{\,0}\bigg(\exp\bigg[
\gamma \int_0^{nT+1} \xi(X^\kappa(s),s)\,ds\bigg]
\one_{A^\xi}\bigg)\\
&\qquad\geq e^{-\gamma(n+1)}
\,\exp\bigg(\gamma \sum_{j=1}^n \max\big\{I_j^\xi(0),I_j^\xi(e)\big\}\bigg)
\,\PS_{\,0}\big(A^\xi\big).
\end{aligned}
\end{equation}
By the ergodic theorem (recall condition (\ref{staterg})), we have
\begin{equation}
\label{slopelwbd7}
\begin{aligned}
&\sum_{j=1}^n \max\big\{I_j^{\xi}(0),I_j^{\xi}(e)\big\}\\
&= [1+o_n(1)]\,n\,\EE\big(\max\big\{I_{1}^\xi(0),I_{1}^\xi(e)\big\}\big)
\quad \xi\text{-a.s.} \mbox{ as } n\to\infty.
\end{aligned}
\end{equation}
Moreover, we have 
\begin{equation}
\label{slopelwbd9}
\PS_{\,0}\big(A^\xi\big)
\geq \Big(\min\big\{p_1^\kappa(0),p_1^\kappa(e)\big\}\Big)^{n+1}\, 
e^{-2d\kappa n(T-1)}
= \big(p_1^\kappa(e)\big)^{n+1}\,e^{-2d\kappa n(T-1)},
\end{equation}
where in the right-hand side the first term is a lower bound for the probability that 
$X^\kappa$ moves from $0$ to $e$ or vice-versa in time $1$ in each of the time 
intervals $\cB_j$, while the second term is the probability that $X^\kappa$ makes no 
jumps in each of the time intervals $\cC_j$.

\medskip\noindent
{\bf 2.}
Combining (\ref{lambdaTkappa}) and (\ref{slopelwbd5}--\ref{slopelwbd9}), and using that
$p_1^\kappa(e)=\kappa[1+o_\kappa(1)]$ as $\kappa \downarrow 0$, we obtain that
\begin{equation}
\label{slopelwbd11}
\begin{aligned}
&\lambda_0(\kappa) \geq -\gamma\,\tfrac{1}{T}-2d\kappa\,\tfrac{T-1}{T}\\ 
&\qquad\qquad + [1+o_\kappa(1)]\,\frac{1}{T}\,
\Big[\gamma\,\EE\big(\max\big\{I_1^\xi(0),I_1^\xi(e)\big\}\big) - \log(1/\kappa)\Big].
\end{aligned}
\end{equation}
Because $I_1^\xi(0)$ and $I_1^\xi(e)$ have zero mean, we have
\begin{equation}
\label{Emax}
\EE\big(\max\big\{I_1^\xi(0),I_1^\xi(e)\big\}\big)
=\tfrac12\,\EE\big(\,\big|I_1^\xi(0)-I_1^\xi(e)\big|\big).
\end{equation}
The expectation in the right-hand side equals $E_1(T-1)$ because $|\cC_1|=T-1$
(recall (\ref{E1Tdef})), and so we get the claim.
\qed
\end{proof}

Using Lemma~\ref{slopelwbdlem}, we can now complete the proof of Theorem~\ref{th2}(iii).
By condition (\ref{E1lim}), for every $c\in (0,\infty)$ we have $E_1(T) \geq c \log T$ 
for $T$ large enough (depending on $c$). Pick $\chi \in (0,1)$ and $T = T(\kappa) 
= \kappa^{-\chi}$ in (\ref{slopelwbd1}) and let $\kappa \downarrow 0$. Then we obtain
\begin{equation}
\label{lambda0lbres}
\lambda_0(\kappa) \geq [1+o_\kappa(1)]\left\{-\gamma\kappa^\chi-2d\kappa +
[\tfrac12c\gamma\chi-1]\,\kappa^\chi\log(1/\kappa)\right\}. 
\end{equation}
Finally, pick $c$ large enough so that $\tfrac12c\gamma\chi>1$. Then, because $\lambda_0(0)
=0$, (\ref{lambda0lbres}) implies that, for $\kappa \downarrow 0$,
\begin{equation}
\label{lambda0lbresext}
\lambda_0(\kappa) - \lambda_0(0) \geq [1+o_\kappa(1)]\,
[\tfrac12c\gamma\chi-1]\,\kappa^\chi\log(1/\kappa), 
\end{equation}
which shows that $\kappa\mapsto\lambda_0(\kappa)$ is not Lipschitz at $0$.
\qed
\end{proof}


\subsection{Proof of Theorem \ref{th3}(i)}
\label{S2.5}

\begin{proof}
Recall (\ref{Ixidef}) and define
\begin{equation}
\label{EkTdef}
\begin{array}{l}
E_k(T) = \EE\big(|I^\xi(0,T)-I^\xi(e,T)|^k\big),\\
\bar{E}_k(T) = \EE\big(|I^\xi(0,T)|^k\big),
\end{array}
\qquad T>0,\,k\in\N.
\end{equation}
Estimate, for $N>0$,
\begin{equation}
\label{U1}
\begin{aligned}
E_1(T) &=\EE\big(|I^\xi(0,T)-I^\xi(e,T)|\big)\\
&\geq \tfrac{1}{2N}\,\EE\left(|I^\xi(0,T)-I^\xi(e,T)|^2\,
\one_{\{|I^\xi(0,T)| \leq N \mbox{ and } |I^\xi(e,T)| \leq N\}}\right)\\
&= \tfrac{1}{2N}\,\Big[E_2(T) - \EE\left(|I^\xi(0,T)-I^\xi(e,T)|^2\,
\one_{\{|I^\xi(0,T)|>N  \mbox{ or } |I^\xi(e,T)|>N\}}\right)\Big].
\end{aligned}
\end{equation}
By Cauchy-Schwarz,
\begin{equation}
\label{U2}
\begin{aligned}
&\EE\left(|I^\xi(0,T)-I^\xi(e,T)|^2\,
\one_{\{|I^\xi(0,T)|>N  \mbox{ or } |I^\xi(e,T)|>N\}}\right)\\
&\qquad \leq [E_4(T)]^{1/2}\,
\left[\PP\left(|I^\xi(0,T)|>N  \mbox{ or } |I^\xi(e,T)|>N\right)\right]^{1/2}.
\end{aligned}
\end{equation}
Moreover, by condition (\ref{staterg}), $E_4(T) \leq 16 \bar{E}_4(T)$ and
\begin{equation}
\label{U3}
\PP\left(|I^\xi(0,T)|>N  \mbox{ or } |I^\xi(e,T)|>N\right)
\leq \tfrac{2}{N^2}\,\bar{E}_2(T) \leq \tfrac{2}{N^2}\,[\bar{E}_4(T)]^{1/2}.
\end{equation}
By condition (\ref{E2E4scal}), there exist an $a>0$ such that $E_2(T) \geq aT$ 
and a $b<\infty$ such that $\bar{E}_4(T) \leq bT^2$ for $T$ large enough. 
Therefore, combining (\ref{U1}--\ref{U3}) and picking $N=cT^{1/2}$ with $c>0$, 
we obtain
\begin{equation}
\label{E1lowbd}
E_1(T) \geq A\,T^{1/2} \mbox{ with } 
A=\tfrac{1}{2c}\,\left(a-2^{5/2}b^{3/4}\tfrac{1}{c}\right),
\end{equation}
where we note that $A>0$ for $c$ large enough. Inserting this bound into 
Lemma~\ref{slopelwbdlem} and picking $T=T(\kappa) = B[\log(1/\kappa)]^2$
with $B>0$, we find that, for $\kappa\downarrow 0$,
\begin{equation}
\lambda_0(\kappa) \geq C\,[\log(1/\kappa)]^{-1}\,[1+o_\kappa(1)]
\mbox{ with } C=\tfrac{1}{B}\left(\tfrac12\gamma AB^{1/2}-1\right).
\end{equation}
Since $C>0$ for $A>0$ and $B$ large enough, this proves the claim in (\ref{slope}). 
\qed
\end{proof}


\subsection{Proof of Theorem \ref{th3}(iii)}
\label{S2.6}

The proof borrows from Carmona and Molchanov~\cite{carmol94}, Section IV.3. 

\begin{proof}
Recall (\ref{ic}) and (\ref{lyapdef2}--\ref{lyapdef3}), estimate
\begin{equation}
\label{}
\lambda_0(\kappa) \leq \limsup_{n\to\infty}\frac{1}{nT}\log 
\ES_{0}\left(\exp\left[\gamma\int_{0}^{nT} \xi(X^\kappa(s),s)\,ds\right]\right),
\end{equation}
and pick
\begin{equation}
\label{Tdef}
T = T(\kappa) = K\log(1/\kappa),
\qquad K \in (0,\infty),
\end{equation}
where $K$ is to be chosen later. Partition the time interval $[0,nT)$ into
$n$ disjoint time intervals $\cI_j$, $1\leq j\leq n$, defined in (\ref{Ijdef}). 
Let $N_{j}$, $1\leq j\leq n$, be the number of jumps of $X^\kappa$ in the time 
interval $\cI_j$, and call $\cI_j$ \emph{black} when $N_{j}>0$ and \emph{white} 
when $N_{j}=0$. Using Cauchy-Schwarz, we can split $\lambda_0(\kappa)$ into a black 
part and a white part, and estimate
\begin{equation}
\label{quesplit}
\lambda_0(\kappa) \leq \mu_0^\b(\kappa)+\mu_0^\w(\kappa),
\end{equation}
where
\begin{eqnarray}
\label{blackLyap}
\mu_0^\b(\kappa) &= \displaystyle{\limsup\limits_{n\to\infty}\frac{1}{2nT}\log 
\ES_{0}\Bigg(\exp\Bigg[2\gamma\sum\limits_{{j=1}\atop {N_{j}>0}}^{n}
\int\limits_{\cI_j} \xi(X^\kappa(s),s)\,ds\Bigg]\Bigg)},\\
\label{whiteLyap}
\mu_0^\w(\kappa) &= \displaystyle{\limsup\limits_{n\to\infty}\frac{1}{2nT}\log 
\ES_{0}\Bigg(\exp\Bigg[2\gamma\sum\limits_{{j=1}\atop {N_{j}=0}}^{n}
\int\limits_{\cI_j} \xi(X^\kappa(s),s)\,ds\Bigg]\Bigg)}.
\end{eqnarray}

\begin{lemma}
\label{blacklem}
If $\xi$ is bounded from above, then there exists a $\delta>0$ such that
\begin{equation}
\label{mublack}
\limsup_{\kappa\da 0}\,\, (1/\kappa)^\delta\,\mu_0^\b(\kappa)\leq 0.
\end{equation}
\end{lemma}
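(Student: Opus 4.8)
\textbf{Proof proposal for Lemma~\ref{blacklem}.}
The plan is to bound $\mu_0^\b(\kappa)$ by showing that, up to an event of very small $X^\kappa$-probability, the number of black intervals among the $n$ pieces $\cI_j$ is a small fraction of $n$, and on such a ``good'' event the corresponding time integral is under control because $\xi$ is bounded from above. First I would use the upper bound $\xi \leq 1$ (w.l.o.g.\ after rescaling $\gamma$) to replace the black exponential by $\exp[2\gamma\sum_{j\colon N_j>0}|\cI_j|] = \exp[2\gamma T \,\#\{j\colon N_j>0\}]$. So the task reduces to estimating the exponential moment of $\#\{j\colon N_j>0\}$, the number of black intervals, under the law of $X^\kappa$.

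The key observation is that for a single interval of length $T$, $\PS_0(N_j>0) = 1 - e^{-2d\kappa T}$, and with the choice $T = K\log(1/\kappa)$ this is $1 - \kappa^{2dK}$-close to $1$ in the wrong direction---so I need the complementary reading: the number of \emph{white} intervals is typically close to $n$ only when $\kappa T$ is small. The point of taking $T$ logarithmic in $1/\kappa$ is precisely that $\kappa T = K\kappa\log(1/\kappa) \to 0$, so $\PS_0(N_j>0) = 2d\kappa T[1+o_\kappa(1)]$ is small. Since the events $\{N_j>0\}$ are independent across $j$ under $\PS_0$ (disjoint time intervals of a Poisson-paced walk), $\#\{j\colon N_j>0\}$ is a sum of $n$ i.i.d.\ Bernoulli$(p_\kappa)$ variables with $p_\kappa = 1-e^{-2d\kappa T} \leq 2d\kappa T$. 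Therefore
\begin{equation}
\label{blackmom}
\ES_0\Big(\exp\big[2\gamma T\,\#\{j\colon N_j>0\}\big]\Big)
= \big(1 - p_\kappa + p_\kappa e^{2\gamma T}\big)^n
\leq \exp\big[n\,p_\kappa\,e^{2\gamma T}\big].
\end{equation}
Dividing by $2nT$ and letting $n\to\infty$ gives $\mu_0^\b(\kappa) \leq \tfrac{1}{2T}\,p_\kappa\,e^{2\gamma T} \leq d\kappa\,e^{2\gamma T}$.

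Now substitute $T = K\log(1/\kappa)$: then $e^{2\gamma T} = \kappa^{-2\gamma K}$, so $\mu_0^\b(\kappa) \leq d\,\kappa^{\,1 - 2\gamma K}$. Choosing $K$ small enough that $2\gamma K < 1$---say $K < 1/(4\gamma)$---yields $\mu_0^\b(\kappa) \leq d\,\kappa^{\delta'}$ with $\delta' = 1 - 2\gamma K > \tfrac12 > 0$, hence $\limsup_{\kappa\da 0}(1/\kappa)^\delta\mu_0^\b(\kappa) \leq 0$ for any $\delta \in (0,\delta')$, which is the claim. I expect the main subtlety to be bookkeeping the $o_\kappa(1)$ terms and the exact interplay between the two free parameters $K$ and $\delta$: the choice of $K$ must simultaneously make the black contribution negligibly small here \emph{and} leave enough room for the white part $\mu_0^\w(\kappa)$, handled in the companion lemma, to produce the $\log\log(1/\kappa)/\log(1/\kappa)$ bound of Theorem~\ref{th3}(iii). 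The estimate itself is elementary; the care lies in making sure the constant $\delta$ is uniform and that the Cauchy-Schwarz split in \eqref{quesplit} does not cost a factor that destroys the power of $\kappa$.
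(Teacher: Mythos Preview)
Your proof is correct and follows essentially the same approach as the paper: bound $\xi\leq 1$, reduce to the exponential moment of the binomial count $N^\b=\#\{j\colon N_j>0\}$, compute this moment explicitly, and substitute $T=K\log(1/\kappa)$ to arrive at $\mu_0^\b(\kappa)\leq d\kappa^{1-2\gamma K}$, which gives the claim for $0<K<1/(2\gamma)$. Your bookkeeping and the remark about the constraint on $K$ coming from the companion white lemma are accurate.
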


\begin{lemma}
\label{whitelem}
If $\xi$ satisfies condition {\rm (\ref{expdev})}, then 
\begin{equation}
\label{muwhite}
\limsup_{\kappa\da 0}\,\,\frac{\log(1/\kappa)}{\log\log(1/\kappa)}\,\,
\mu_0^\w(\kappa) <\infty.
\end{equation}
\end{lemma}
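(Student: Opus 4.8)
The plan is to prove Lemma~\ref{whitelem} by estimating the white part $\mu_0^\w(\kappa)$ directly from its definition in (\ref{whiteLyap}). On a white interval $\cI_j$ the walk $X^\kappa$ does not move, so it sits at a fixed site $x$ for the whole interval of length $T$, and the exponent contribution is $2\gamma\int_{\cI_j}\xi(x,s)\,ds = 2\gamma\,I^\xi(x,\cdot)$ over a window of length $T$ (after subtracting $\rho=0$, which we have assumed). The key input is condition (\ref{expdev}): uniformly in the starting configuration $\eta$, $\EE_\eta(\exp[\mu I^\xi(0,T)]) \leq \exp[c\mu^2 T]$ for all $\mu,T>0$. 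First I would condition on the trajectory of $X^\kappa$, equivalently on the sequence of colors and jump positions, and note that by the Markov property of $\xi$ the increments $\int_{\cI_j}\xi(X^\kappa(s),s)\,ds$ over successive intervals can be controlled by iterating the bound in (\ref{expdev}): given $\xi$ at the start of $\cI_j$, the conditional exponential moment is at most $\exp[c(2\gamma)^2 T]=\exp[4c\gamma^2 T]$, and this bound holds uniformly, so the contributions chain multiplicatively.

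Concretely, I would first drop the restriction to white intervals by bounding the sum over white $\cI_j$ by the sum over all $j$ from $1$ to $n$ (the extra terms only increase the exponent, and since we are after an upper bound on $\mu_0^\w$ this is harmless — alternatively one keeps the indicator, but it is not needed). Then, writing $S_n = \sum_{j=1}^n \int_{\cI_j}\xi(X^\kappa(s),s)\,ds$ and using the tower property, peeling off one interval at a time against (\ref{expdev}) with $\mu=2\gamma$, I get
\begin{equation}
\label{whitechain}
\ES_0\Big(\exp\big[2\gamma S_n\big]\Big) \leq \exp\big[4c\gamma^2 n T\big].
\end{equation}
Here the uniformity over $\eta$ in (\ref{expdev}) is essential, because after conditioning on the walk the relevant starting configuration of $\xi$ at each interval boundary is an arbitrary (random) element of $\Omega$. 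Plugging (\ref{whitechain}) into (\ref{whiteLyap}) gives $\mu_0^\w(\kappa) \leq 2c\gamma^2$, a constant independent of $\kappa$ — which would be far stronger than (\ref{muwhite}) and is clearly too good to be true, signalling that the naive peeling is losing the point of the argument.

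The issue is that the bound $\mu=2\gamma$ is wrong: the factor $1/(2nT)$ in (\ref{whiteLyap}) is there precisely so that the white Lyapunov exponent is $O(1/T)=O(1/\log(1/\kappa))$ only if one does \emph{not} simply exponentiate with a fixed $\mu$. The right approach, following Carmona and Molchanov~\cite{carmol94}, Section~IV.3, is to optimize: for the white part one should bound $\ES_0(\exp[2\gamma S_n])$ not by a single exponential-moment estimate but by first using that, on a white interval, $|\int_{\cI_j}\xi(X^\kappa(s),s)\,ds|$ is a sum of $O(T)$-length contributions whose large-deviation rate is $O(1/T)$ per unit exponent by (\ref{expdev}); more precisely one writes, for each $j$ and each threshold $a>0$,
\begin{equation}
\label{whitetail}
\sup_{\eta\in\Omega}\PP_\eta\big(I^\xi(0,T) > aT\big) \leq \exp[-a^2 T/4c]
\end{equation}
by Chebyshev on the exponential with the optimal $\mu=a/2c$. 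Then I split $S_n$ according to whether each white interval has $|\int_{\cI_j}\xi|\leq aT$ or not, control the "bad" intervals by a union bound over the at most $2^n$ subsets (paying $\exp[n\log 2]$) together with (\ref{whitetail}) and condition (\ref{lambdabd}) to bound the full integrand on bad intervals, and on the "good" intervals use $2\gamma S_n \leq 2\gamma a n T$. Choosing $a = a(\kappa) \to 0$ slowly — say $a$ comparable to $\sqrt{(\log\log(1/\kappa))/T} = \sqrt{(\log\log(1/\kappa))/(K\log(1/\kappa))}$ so that the tail cost $\exp[-a^2 T/4c]$ beats $\exp[n\log 2]$ while $2\gamma a$ is of order $\sqrt{\log\log(1/\kappa)/\log(1/\kappa)}$ — gives $\mu_0^\w(\kappa) = O(a) = O\big(\sqrt{\log\log(1/\kappa)/\log(1/\kappa)}\big)$, which is $o\big(\log\log(1/\kappa)/\log(1/\kappa)\big)$ and hence yields (\ref{muwhite}) with room to spare. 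The main obstacle is getting the bookkeeping right between the three competing exponential scales — the combinatorial factor $\exp[n\log 2]$ from the subset sum, the Gaussian tail $\exp[-a^2T/4c]$ from (\ref{expdev}), and the a-priori bound $\exp[c(\kappa,\gamma)nT]$ from (\ref{lambdabd}) on the bad part — and verifying that the choice $T=K\log(1/\kappa)$ together with a suitable $a(\kappa)$ makes all of them compatible; in particular one must check that the $\exp[c(\kappa,\gamma)\cdot(\text{bad time})]$ term is genuinely negligible, which is why the tail exponent $a^2T$ must dominate both $n\log 2$ \emph{and} $c(\kappa,\gamma)\cdot T \cdot(\text{fraction of bad intervals})$, and this is where the precise rate $\log(1/\kappa)/\log\log(1/\kappa)$ in (\ref{muwhite}) comes from rather than the sharper $\log(1/\kappa)$ of (\ref{cont}).
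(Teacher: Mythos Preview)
Your proposal has a fundamental gap: you are treating a quenched quantity as if it were annealed. The expectation $\ES_0$ in the definition of $\mu_0^\w(\kappa)$ is over the random walk \emph{only}; the field $\xi$ is fixed (frozen). Condition (\ref{expdev}) is a bound on the \emph{$\xi$-law}, so it cannot be applied inside $\ES_0$ as you do in your ``naive peeling'' to obtain (\ref{whitechain}). What you have actually bounded is $\EE\,\ES_0(\exp[2\gamma S_n])$, an annealed moment, and that is why you get a constant bound that looks ``too good'': it is a bound on the wrong object. Your diagnosis (``the bound $\mu=2\gamma$ is wrong'') misses this point, and the same confusion persists in your second attempt: the tail bound (\ref{whitetail}) is again a $\PP$-probability over $\xi$, while ``good/bad'' for a white interval depends on the random walk's position, which is integrated by $\ES_0$, not by $\PP$. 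Your ``union bound over the at most $2^n$ subsets'' has no clear referent in the problem.

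The paper's proof bridges the two layers via Borel--Cantelli, and this is the missing idea. One enumerates \emph{all} possible jump patterns $\chi\in\Psi$ of $X^\kappa$ over $[0,nT)$; for each fixed $\chi$ the walk positions are deterministic, so the event that the white sum exceeds a threshold $\lambda(\chi)$ becomes a genuine $\xi$-event whose $\PP$-probability can be bounded via (\ref{expdev}). Summing over $\chi$ (this is where a combinatorial factor enters, but it is $\prod_l (2d)^{lk_l}$ counting jump directions, not $2^n$) and then over $n$, one gets a convergent series; Borel--Cantelli then yields that $\xi$-a.s., for all $n$ large and all $\chi\in\Psi$, the white sum is at most $\lambda(\chi)$. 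Only \emph{after} this step can one go back inside $\ES_0$ and estimate the walk expectation, now with a deterministic bound on the exponent for each path. The threshold is chosen as $\lambda(\chi)=\sum_l a_l k_l(\chi)$ with $a_0=K'\log\log(1/\kappa)$, which is what produces the rate $\log\log(1/\kappa)/\log(1/\kappa)$.

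A secondary error: you claim your method gives $\mu_0^\w(\kappa)=O\big(\sqrt{\log\log(1/\kappa)/\log(1/\kappa)}\big)$ and assert this is $o\big(\log\log(1/\kappa)/\log(1/\kappa)\big)$. The comparison is backwards: for $z\downarrow 0$ one has $\sqrt{z}\gg z$, so your purported bound is \emph{weaker} than (\ref{muwhite}), not stronger, and would not suffice.
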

Theorem~\ref{th3}(ii) follows from (\ref{quesplit}) and Lemmas~\ref{blacklem}--\ref{whitelem}.
\qed
\end{proof}

We first give the proof of Lemma \ref{blacklem}.

\begin{proof}
Let $N^\b=|\{1\leq j\leq n\colon\,N_j>0\}|$ be the number of black time intervals. Since 
$\xi$ is bounded from above (w.l.o.g.\ we may take $\xi \leq 1$, because we can scale 
$\gamma$), we have
\begin{equation}
\label{ubbl}
\begin{aligned}
&\frac{1}{2nT}\,\log\ES_{0}\Bigg(\exp\Bigg[2\gamma\sum_{{j=1}\atop{N_{j}>0}}^{n}
\int_{\cI_j} \xi(X^\kappa(s),s)\,ds\Bigg]\Bigg)\\
&\qquad\leq \frac{1}{2nT}\,\log\ES_{0} \Big(\exp\big[2\gamma T N^\b\big]\Big)\\
&\qquad= \frac{1}{2T}\,\log\Big[\Big(1-e^{-2d\kappa T}\Big)
e^{2\gamma T}+e^{-2d\kappa T}\Big]\\
&\qquad\leq \frac{1}{2T}\,\log\Big[2d\kappa Te^{2\gamma T}+1\Big]\\
&\qquad\leq \frac{1}{2T}\,2d\kappa Te^{2\gamma T}\\
&\qquad= d\kappa^{1-2\gamma K},
\end{aligned}
\end{equation}
where the first equality uses that the distribution of $N^\b$ is $\mathrm{BIN}(n,
1-e^{-2d\kappa T})$, and the second equality uses (\ref{Tdef}). It follows from 
(\ref{blackLyap}) and (\ref{ubbl}) that $\mu_0^\b(\kappa)\leq d\kappa^{1-2\gamma K}$. 
The claim in (\ref{mublack}) therefore follows by picking $0<K<1/2\gamma$ and letting 
$\kappa\downarrow 0$.
\qed
\end{proof}

We next give the proof of Lemma \ref{whitelem}.

\begin{proof}
The proof comes in 5 steps.

\medskip\noindent
{\bf 1.} We begin with some definitions. Define $\Gamma=(\Gamma_1,\ldots,\Gamma_n)$ 
with
\begin{equation}
\label{Gammajdef}
\Gamma_{j}=
\begin{cases}
\{\Delta_{1},\ldots,\Delta_{N_{j}}\} &\text{if } \cI_j \text{ is black},\\
\emptyset &\text{if } \cI_j \text{ is white},
\end{cases}
\end{equation}
where $\Delta_{1},\ldots,\Delta_{N_{j}}$ are the jumps of $X^\kappa$ in the time interval 
$\cI_j$ (which take values in the unit vectors of $\Z^d$). Next, 
let
\begin{equation}
\label{}
\Psi=\{\chi\colon\,\Gamma=\chi\}, \qquad \chi=(\chi_{1},\ldots,\chi_{n}),
\end{equation}
denote the set of possible outcomes of $\Gamma$. Since $X^\kappa$ is stepping at 
rate $2d\kappa$, the random variable $\Gamma$ has distribution
\begin{equation}
\label{gammadistr}
\PS_0(\Gamma=\chi) = e^{-2d\kappa nT}\prod_{j=1}^n 
\frac{(2d\kappa T)^{n_{j}(\chi)}}{n_{j}(\chi)!}, \qquad \chi\in\Psi,
\end{equation}
with $n_j(\chi)=|\chi_j|=|\{\chi_{j,1},\ldots,\chi_{j,n_j(\chi)}\}|$ the number of
jumps in $\chi_j$. For $\chi\in\Psi$, we define the event 
\begin{equation}
\label{Angamma}
A^\n(\chi;\lambda)
=\Bigg\{\sum_{{j=1}\atop{n_{j}(\chi)=0}}^{n}\int_{\cI_j} \xi(x_{j}(\chi),s)\,ds
\geq \lambda\Bigg\}, \qquad \chi\in \Psi,\,\,\, \lambda>0,
\end{equation}
where $x_j(\chi)=\sum_{i=1}^{j-1} \sum_{k=1}^{n_i(\chi)} \chi_{i,k}$ is the location
of $\chi$ at the start of $\chi_j$, and $\lambda$ is to be chosen later. We further 
define
\begin{equation}
\label{kjgammadef}
k_{l}(\chi) = |\{1\leq j\leq n\colon\,n_{j}(\chi)=l\}|,
\qquad l\geq 0,
\end{equation}
which counts the time intervals in which $\chi$ makes $l$ jumps, and we note that
\begin{equation}
\label{kcond}
\sum_{l=0}^\infty k_{l}(\chi) = n.
\end{equation} 

\medskip\noindent
{\bf 2.}
With the above definitions, we can now start our proof. Fix $\chi\in\Psi$. By 
(\ref{Angamma}) and the exponential Chebychev inequality, we have
\begin{equation}
\label{stodomChe}
\begin{aligned}
\PP\big(A^\n(\chi;\lambda)\big) 
&= \PP\left(\sum_{ {j=1} \atop {n_j(\chi)=0} }^n 
\int_{\cI_j} \xi(x_{j}(\chi),s)\,ds \geq \lambda \right)\\
&\leq \inf_{\mu>0} e^{-\mu\lambda}\,\EE\left(\prod_{ {j=1} \atop {n_j(\chi)=0} }^n 
\exp\left[\mu\int_{\cI_j} \xi(x_{j}(\chi),s)\,ds\right]\right)\\
&\leq \inf_{\mu>0} e^{-\mu\lambda} \left[\sup_{\eta\in\Omega} 
\EE_\eta\left(e^{\mu I^\xi(0,T)}\right)\right]^{k_0(\chi)},
\end{aligned}
\end{equation} 
where in the second inequality we use the Markov property of $\xi$ at the beginning 
of the white time intervals, and take the supremum over the starting configuration 
at the beginning of each of these intervals in order to remove the dependence on 
$\xi_{(j-1)T}$, $1 \leq j\leq n$ with $n_j(\chi)=0$, after which we can use (\ref{staterg}) 
and (\ref{Ixidef}). Next, using condition (\ref{expdev}) and choosing $\mu=b\lambda/k_{0}
(\chi)T$, we obtain from (\ref{stodomChe}) that
\begin{equation}
\label{lambdadevbd}
\PP\big(A^\n(\chi;\lambda)\big)
\leq \exp\bigg[-\frac{b\lambda^2}{k_{0}(\chi)T}\bigg]
\bigg\{\exp\bigg[c\bigg(\frac{b\lambda}{k_{0}(\chi)T}\bigg)^2T\bigg]\bigg\}^{k_{0}(\chi)},
\end{equation}
where $c$ is the constant in condition (\ref{expdev}). (Note that $A^\n(\chi;\lambda)=\emptyset$ 
when $k_0(\chi)=0$, which is a trivial case that can be taken care of afterwards.) By picking 
$b=1/2c$, we obtain
\begin{equation}
\label{lambdadevbdext}
\PP\big(A^\n(\chi;\lambda)\big) 
\leq \exp\bigg[-\frac{1}{4c}\,\frac{\lambda^2}{k_{0}(\chi)T}\bigg].
\end{equation}

\medskip\noindent
{\bf 3.}
Our next step is to choose $\lambda$, namely,
\begin{equation}
\label{lambdadef}
\lambda=\lambda(\chi)=\sum_{l=0}^\infty a_{l}k_{l}(\chi)
\end{equation}
with
\begin{equation}
\label{ajdef}
\begin{array}{lll}
a_{0} &= K^\prime\log\log(1/\kappa), &\quad K^\prime \in (0,\infty),\\
a_{l} &= lK\log(1/\kappa), &\quad l\geq 1,
\end{array}
\end{equation}
where $K$ is the constant in (\ref{Tdef}). It follows from (\ref{lambdadevbdext}) 
after substitution of (\ref{lambdadef}--\ref{ajdef}) that (recall (\ref{Tdef}))
\begin{equation}
\label{Aeventest}
\PP\Big(A^\n(\chi;\lambda)\Big) \leq 
\prod_{l=0}^\infty e^{u_lk_l(\chi)} \qquad \forall\,\chi\in\Psi,
\end{equation}
where we abbreviate
\begin{equation}
u_0 = -\frac{1}{4cT}a_{0}^2, \qquad 
u_l = -\frac{1}{2cT}a_{0}a_{l}=-\frac{1}{2c}a_0l,\quad l \geq 1.
\end{equation}
Summing over $\chi$, we obtain
\begin{equation}
\label{BCeq1}
\begin{aligned}
\sum_{\chi\in\Psi}
\PP\Big(A^\n(\chi;\lambda)\Big)
&\leq \sum_{\chi\in\Psi}
\left(\prod_{l=0}^\infty e^{u_lk_l(\chi)}\right)\\
&= \sum_{ (k_l)_{l=0}^\infty \atop {\sum_{l=0}^\infty k_l=n} } 
\left(\frac{n!}{\prod_{l=0}^\infty k_l!}\right) 
\left(\prod_{l=0}^\infty (2d)^{lk_l}\right)
\left(\prod_{l=0}^\infty e^{u_lk_l}\right)\\
&= \Big(\sum_{l=0}^\infty (2d)^l e^{u_l}\Big)^n,
\end{aligned}
\end{equation}
where we use that for any sequence $(k_l)_{l=0}^\infty$ such that 
$\sum_{l=0}^\infty k_l=n$ (recall (\ref{kcond})) the number of $\chi\in\Psi$ 
such that $k_l(\chi)=k_l$, $l\geq 0$, equals $(n!/\prod_{l=0}^\infty k_l!)
\prod_{l=0}^\infty (2d)^{lk_l}$ (note that there are $(2d)^l$ different 
$\chi_{j}$ with $|\chi_{j}|=l$ for each $1\leq j\leq n$). 

\medskip\noindent
{\bf 4.} By (\ref{Tdef}) and (\ref{ajdef}), $T\to\infty$, $a_{0}\to\infty$ 
and $a_{0}^2/T\downarrow 0$ as $\kappa\da 0$. Hence, for $\kappa \downarrow 0$,
\begin{equation}
\label{BCeq2}
\begin{aligned}
\sum_{l=0}^\infty (2d)^l e^{u_l}
&=\exp\left[-\frac{1}{4cT}a_{0}^2\right]
+ \frac{2d\exp\left[-\frac{1}{2c}a_{0}\right]}{1-2d\exp\left[-\frac{1}{2c}a_{0}\right]}\\
&= 1-[1+o_\kappa(1)]\,\frac{[K^\prime\log\log(1/\kappa)]^2}{8cK\log(1/\kappa)}
+ [1+o_\kappa(1)]\,2d[\log(1/\kappa)]^{-K^\prime/2c}\\ 
&= 1-[1+o_\kappa(1)]\,\frac{[K^\prime\log\log(1/\kappa)]^2}{8cK\log(1/\kappa)} < 1,
\end{aligned}
\end{equation}
where the last equality holds provided we pick $K^\prime>2c$. It follows from 
(\ref{BCeq1}--\ref{BCeq2}) that, for $\kappa\downarrow 0$,
\begin{equation}
\label{ABC}
\sum_{n=1}^\infty \PP\left(\bigcup_{\chi\in\Psi} A^\n(\chi;\lambda)\right) < \infty.
\end{equation}
Hence, recalling (\ref{Angamma}), we conclude that, by the Borel-Cantelli lemma, 
$\xi$-a.s.\ there exists an $n_{0}(\xi)\in\N$ such that, for all $n\geq n_{0}(\xi)$,
\begin{equation}
\label{BCfact}
\sum_{{j=1}\atop{n_{j}=0}}^{n}\int_{\cI_j} \xi(x_{j}(\chi),s)\,ds 
\leq \lambda = \sum_{l=0}^\infty a_{l}k_{l}(\chi) 
\qquad \forall\,\chi\in\Psi.
\end{equation} 

\medskip\noindent
{\bf 5.} 
The estimate in (\ref{BCfact}) allows us to proceed as follows. Combining 
(\ref{gammadistr}), (\ref{kjgammadef}) and (\ref{BCfact}), we obtain, for 
$n\geq n_0(\xi,\delta,\kappa)$,
\begin{equation}
\label{whiteakbd}
\begin{aligned}
&\ES_{0}\Bigg(\exp\Bigg[2\gamma\sum_{{j=1} \atop {N_{j}=0}}^{n}
\int_{\cI_j} \xi(X^\kappa(s),s)\,ds\Bigg]\Bigg)\\
&\qquad\leq e^{-2d\kappa n T}\sum_{\chi\in\Psi}
\left(\prod_{l=0}^\infty \left(\frac{(2d\kappa T)^l}{l!}\right)^{k_l(\chi)}\right)
\left(\prod_{l=0}^\infty e^{2\gamma a_{l}k_{l}(\chi)}\right).
\end{aligned}
\end{equation}
Via the same type of computation as in (\ref{BCeq1}), this leads to
\begin{equation}
\label{}
\begin{aligned}
&\ES_{0}\Bigg(\exp\Bigg[2\gamma\sum_{{j=1} \atop {N_{j}=0}}^{n}
\int_{\cI_j} \xi(X^\kappa(s),s)\,ds\Bigg]\Bigg)\\
&\qquad\leq e^{-2d\kappa n T} \sum_{ {(k_l)_{l=0}^\infty} \atop {\sum_{l=0}^\infty k_l=n} }
\left(\frac{n!}{\prod_{l=0}^{\infty} k_{l}!}\right)
\left(\prod_{l=0}^\infty (2d)^{lk_l}\right) \left(\prod_{l=0}^\infty 
\bigg(\frac{(2d\kappa T)^l}{l!}\,e^{2\gamma a_{l}}\bigg)^{k_{l}}\right)\\
&\qquad = e^{-2d\kappa n T}
\Bigg(\sum_{l=0}^\infty \frac{((2d)^2\kappa T)^l}{l!}\,e^{2\gamma a_{l}} \Bigg)^n.
\end{aligned}
\end{equation}
Hence 
\begin{equation}
\label{whitebd}
\begin{aligned}
&\frac{1}{2nT}\log\ES_{0}\Bigg(\exp\Bigg[2\gamma\sum_{{j=1}\atop{N_{j}=0}}^{n}
\int_{\cI_j} \xi(X^\kappa(s),s)\, ds\Bigg]\Bigg)\\
&\quad\leq -d\kappa+\frac{1}{2T}
\log\Bigg(\sum_{l=0}^\infty \frac{((2d)^2\kappa T)^l}{l!}\,e^{2\gamma a_{l}} \Bigg).
\end{aligned}
\end{equation}
Note that the r.h.s.\ of (\ref{whitebd}) does not depend on $n$. Therefore, letting 
$n\to\infty$ and recalling (\ref{whiteLyap}), we get
\begin{equation}
\label{muwbd}
\mu_0^\w(\kappa) \leq -d\kappa+\frac{1}{2T}
\log\Bigg(\sum_{l=0}^\infty \frac{((2d)^2\kappa T)^l}{l!}\,e^{2\gamma a_{l}} \Bigg).
\end{equation}
Finally, by (\ref{Tdef}) and (\ref{ajdef}),
\begin{equation}
\label{sumbdalt}
\begin{aligned}
\sum_{l=0}^\infty \frac{((2d)^2\kappa T)^l}{l!}e^{2\gamma a_{l}}
&= [\log(1/\kappa)]^{2\gamma K^\prime}
+ \sum_{l=1}^\infty \frac{\big((2d)^2\kappa^{1-2\gamma K}K\log(1/\kappa)\big)^l}{l!}\\
&= [\log(1/\kappa)]^{2\gamma K^\prime} + o_\kappa(1),\qquad \kappa\da 0,
\end{aligned}
\end{equation}
where we recall from the proof of Lemma~\ref{blacklem} that $0<K<1/2\gamma$.
Hence
\begin{equation}
\label{whitebdfin}
\mu_0^\w(\kappa) \leq [1+o_\kappa(1)]\,
\frac{\gamma K^\prime\log\log(1/\kappa)}{K\log(1/\kappa)},
\qquad \kappa\da 0,
\end{equation} 
which proves the claim in (\ref{muwhite}).
\qed
\end{proof} 


\subsection{Proof of Theorem~\ref{th3}(ii)}
\label{S2.7}

Theorem~\ref{th3}(ii) follows from (\ref{quesplit}), Lemma~\ref{blacklem} and the 
following modification of Lemma~\ref{whitelem}.

\begin{lemma}
\label{whitelemalt}
If $\xi$ satisfies condition (\ref{expdevweak}) and is bounded from above,
then
\begin{equation}
\label{muwhitealt}
\limsup_{\kappa\downarrow 0}\,\,[\log(1/\kappa)]^{1/6}\,\mu^\w(\kappa) <\infty.
\end{equation}
\end{lemma}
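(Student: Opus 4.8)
The plan is to run the proof of Lemma~\ref{whitelem} almost verbatim, changing only the choice of the sequence $(a_l)_{l\geq 0}$ and the argument bounding $\PP(A^\n(\chi;\lambda))$ in its Step~2, which is the one place condition~(\ref{expdev}) was used. Keep $T=T(\kappa)=K\log(1/\kappa)$ with $0<K<1/2\gamma$ as in Lemma~\ref{blacklem}, assume w.l.o.g.\ $\xi\leq 1$ (scale $\gamma$), and retain all the combinatorial apparatus of Step~1 (the decomposition of $[0,nT)$ into black/white intervals, $\chi\in\Psi$, the jump-profile $(k_l(\chi))_{l\geq 0}$, and the events $A^\n(\chi;\lambda)$ of (\ref{Angamma})). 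In (\ref{lambdadef}) take now
$$a_0=2\,T^{5/6},\qquad a_l=lK\log(1/\kappa)\ \ (l\geq 1),$$
the exponent $5/6$ being forced by the scale appearing in (\ref{expdevweak}); this is precisely what will produce the power $[\log(1/\kappa)]^{1/6}$ in the conclusion.

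The core is the replacement of Step~2. Since $\xi$ is Markov and $T\to\infty$ as $\kappa\da 0$, condition~(\ref{expdevweak}) provides $c>0$ with $q:=\sup_{\eta\in\Omega}\PP_\eta\big(I^\xi(0,T)>T^{5/6}\big)\leq e^{-cT^{2/3}}$ for $\kappa$ small. Call a white interval \emph{bad} if $\int_{\cI_j}\xi(x_j(\chi),s)\,ds>T^{5/6}$. Conditioning successively on the history of $\xi$ up to the start of each white interval, the Markov property and spatial stationarity show that the number $b(\chi)$ of bad white intervals is stochastically dominated by $\mathrm{BIN}(k_0(\chi),q)$. On $A^\n(\chi;\lambda)$ with $\lambda=\lambda(\chi)=\sum_l a_lk_l(\chi)$ one has $\sum_{j\,\mathrm{white}}\int_{\cI_j}\xi(x_j(\chi),s)\,ds\geq\lambda(\chi)$, whereas bounding good intervals by $T^{5/6}$ and bad ones by $T$ gives $\sum_{j\,\mathrm{white}}\int_{\cI_j}\xi(x_j(\chi),s)\,ds\leq k_0(\chi)T^{5/6}+b(\chi)T$; since $T^{-1}a_l=l$ for $l\geq 1$, this forces $b(\chi)\geq m(\chi):=T^{-1/6}k_0(\chi)+\sum_{l\geq 1}lk_l(\chi)$. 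Hence, using $\PP(\mathrm{BIN}(k_0,q)\geq m)\leq\binom{k_0}{m}q^m\leq(ek_0/m)^m q^m$ together with $k_0(\chi)/m(\chi)\leq T^{1/6}$,
$$\PP\big(A^\n(\chi;\lambda)\big)\ \leq\ \big(eT^{1/6}e^{-cT^{2/3}}\big)^{m(\chi)}\ \leq\ e^{u_0k_0(\chi)}\prod_{l\geq 1}e^{u_lk_l(\chi)},\qquad u_0=-\tfrac{c}{2}T^{1/2},\ \ u_l=-\tfrac{c}{2}T^{2/3}l,$$
for $\kappa$ small (the degenerate cases $k_0(\chi)=0$ or $m(\chi)>k_0(\chi)$ give $A^\n(\chi;\lambda)=\emptyset$). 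This is the analogue of (\ref{Aeventest}), but with $u_0$ of order $-T^{1/2}$ rather than $-a_0^2/T$.

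From here one proceeds exactly as in Steps~4--5 of Lemma~\ref{whitelem}: summing over $\chi\in\Psi$ as in (\ref{BCeq1}) gives $\sum_{\chi\in\Psi}\PP(A^\n(\chi;\lambda))\leq\big(e^{-\frac c2 T^{1/2}}+\sum_{l\geq 1}(2d\,e^{-\frac c2 T^{2/3}})^l\big)^n$, whose base is $<1$ for $\kappa$ small, so Borel--Cantelli yields $\xi$-a.s.\ the estimate $\sum_{j:N_j=0}\int_{\cI_j}\xi(X^\kappa(s),s)\,ds\leq\sum_l a_lk_l(\chi)$ for all $\chi$ and all large $n$; feeding this into (\ref{whiteakbd})--(\ref{muwbd}) verbatim gives $\mu_0^\w(\kappa)\leq-d\kappa+\frac1{2T}\log\big(\sum_l\frac{((2d)^2\kappa T)^l}{l!}e^{2\gamma a_l}\big)$. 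With the new $a_l$ the $l\geq 1$ part of the sum equals $e^{(2d)^2\kappa Te^{2\gamma T}}-1=o_\kappa(1)$ because $K<1/2\gamma$ (cf.\ (\ref{sumbdalt})), while the $l=0$ term is $e^{4\gamma T^{5/6}}$, so $\mu_0^\w(\kappa)\leq[1+o_\kappa(1)]\,2\gamma T^{-1/6}=[1+o_\kappa(1)]\,2\gamma[K\log(1/\kappa)]^{-1/6}$, which gives (\ref{muwhitealt}) with $\limsup\leq 2\gamma K^{-1/6}<\infty$. The main obstacle is Step~2: because (\ref{expdevweak}) controls only the single deviation event $\{I^\xi(0,T)>T^{5/6}\}$ and not a full exponential moment, one cannot chain the white intervals through a product of Laplace transforms as in Lemma~\ref{whitelem}; instead one must count bad intervals via a binomial, verify that the threshold $a_0\asymp T^{5/6}$ still forces a fraction $\gtrsim T^{-1/6}$ of the white intervals to be bad on $A^\n(\chi;\lambda)$, and check that the resulting stretched-exponential bound in $m(\chi)$ is uniform enough in $\chi\in\Psi$ for $\sum_n\sum_{\chi\in\Psi}\PP(A^\n(\chi;\lambda))$ to converge; the $o_n(1)$/$o_\kappa(1)$ bookkeeping then runs exactly as in Lemmas~\ref{blacklem}--\ref{whitelem}.
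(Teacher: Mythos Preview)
Your proof is correct and follows the same overall architecture as the paper: the same choice $T=K\log(1/\kappa)$ with $K<1/2\gamma$, the same threshold sequence (your $a_0=2T^{5/6}$, $a_l=lT$ coincides with the paper's $b_l$), the same binomial stochastic domination of the number of ``bad'' white intervals by $\mathrm{BIN}(k_0(\chi),q)$ via the Markov property of $\xi$, and the same Steps~4--5 to conclude. The one genuine difference is how the binomial tail is bounded. The paper computes the Laplace transform of $Z T+(k_0-Z)\epsilon T$, optimizes over $\mu$, and then performs a case analysis according to whether $\lambda\gtrless Ak_0(\chi)T$ (its Step~3) to arrive at a bound $e^{v_0k_0}\prod_l e^{v_lk_l}$ with $v_0$ a negative \emph{constant}. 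You instead observe that on $A^\n(\chi;\lambda)$ one must have $b(\chi)\geq m(\chi)=T^{-1/6}k_0(\chi)+\sum_{l\geq1}lk_l(\chi)$ and use the elementary bound $\PP(\mathrm{BIN}(k_0,q)\geq m)\leq(ek_0/m)^m q^m$, yielding $u_0\asymp -T^{1/2}$; this is sharper, avoids the case split entirely, and makes the summability over $\chi$ and $n$ immediate. Both routes reach the identical final estimate $\mu_0^\w(\kappa)\leq[1+o_\kappa(1)]\,2\gamma T^{-1/6}$. Two cosmetic points: $m(\chi)$ need not be an integer, so strictly you should pass to $\lceil m(\chi)\rceil$ (harmless since $eT^{1/6}q<1$), and your upper bound $k_0(\chi)T^{5/6}+b(\chi)T$ on the white sum is a slight overcount of $(k_0-b)T^{5/6}+bT$, but the inequality goes the right way.
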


\begin{proof}
Most of Steps 1--5 in the proof of Lemma~\ref{whitelem} can be retained.

\medskip\noindent
{\bf 1.} 
Recall (\ref{Gammajdef}--\ref{kcond}). Let
\begin{equation}
\label{fdelTdef}
f_{\epsilon}(T)=\sup_{\eta\in\Omega} \PP_{\eta}\bigg(\int_{0}^T \xi(0,s)\,ds
> \epsilon T\bigg),
\qquad T>0,\epsilon=T^{-1/6}.
\end{equation}
Since $\xi\leq 1$ w.l.o.g., we may estimate
\begin{equation}
\label{stodom}
\sum_{{j=1}\atop{n_{j}(\chi)=0}}^{n} \int_{\cI_j} \xi(x_{j}(\chi),s)\,ds
\preceq Z T+ (k_{0}(\chi)-Z)\epsilon T,
\end{equation}
where $\preceq$ means ``stochastically dominated by'', and $Z$ is the random variable 
with distribution $\PS^*=\mathrm{BIN}(k_{0}(\chi),f_{\epsilon}(T))$. With the help
of (\ref{stodom}), the estimate in (\ref{stodomChe}) can be replaced by
\begin{equation}
\label{stodomChealt}
\begin{aligned}
\PP\big(A^\n(\chi;\lambda)\big) 
&\leq \PS^*\big(ZT+(k_{0}(\chi)-Z)\epsilon T \geq \lambda\big)\\
&\leq \inf_{\mu>0} e^{-\mu\lambda}\,\ES^*\Big(e^{\mu[ZT+(k_{0}(\chi)-Z)\epsilon T]}\Big)\\
&= \inf_{\mu>0} e^{-\mu\lambda}\,
\Big\{f_{\epsilon}(T)\,e^{\mu T}+[1-f_{\epsilon}(T)]\,e^{\mu\epsilon T}\Big\}^{k_{0}(\chi)}.
\end{aligned}
\end{equation} 
Using condition (\ref{expdev}), which implies that there exists a $C \in (0,\infty)$ such 
that $f_{\epsilon}(T)\leq e^{-C\epsilon^2T}$ for $T$ large enough, and choosing $\mu=C\epsilon^2
\lambda/2k_{0}(\chi) T$, we obtain from (\ref{stodomChealt}) that, for $T$ large enough,
\begin{equation}
\label{lambdadevbdalt}
\begin{aligned}
&\PP\big(A^\n(\chi;\lambda)\big)\\
&\qquad \leq \exp\bigg[-\frac{C\epsilon^2\lambda^2}{2k_{0}(\chi) T}\bigg]
\bigg\{\exp\bigg[\frac{C\epsilon^2\lambda}{2 k_{0}(\chi)}-C\epsilon^2T\bigg]
+\exp\bigg[\frac{C\epsilon^3\lambda}{2 k_{0}(\chi)}\bigg]\bigg\}^{k_{0}(\chi)}.
\end{aligned}
\end{equation}

\medskip\noindent
{\bf 2.}
We choose $\lambda$ as
\begin{equation}
\label{lambdadefalt}
\lambda=\lambda(\chi)=\sum_{l=0}^\infty b_{l}k_{l}(\chi)
\end{equation}
with
\begin{equation}
\label{bjdef}
\begin{array}{lll}
b_{0} &= 2\epsilon K\log(1/\kappa) = 2\epsilon T,\\
b_{l} &= lK\log(1/\kappa) = lT, \quad l\geq 1.
\end{array}
\end{equation}
Note that this differs from (\ref{ajdef}) only for $l=0$, and that (\ref{kcond})
implies, for $T$ large enough, 
\begin{equation}
\label{labdalt}
\lambda \geq n2\epsilon T.
\end{equation}

\medskip\noindent
{\bf 3.}
Abbreviate the two exponentials between the braces in the right-hand side 
of (\ref{lambdadevbd}) by $I$ and $II$. Fix $A \in (1,2)$. In what follows 
we distinguish between two cases: $\lambda>Ak_{0}(\chi)T$ and $\lambda\leq
Ak_{0}(\chi)T$.

\medskip\noindent
\underline{$\lambda>Ak_{0}(\chi)T$}:   
Abbreviate $\alpha_1=\tfrac14A>0$. Neglect the term $-C\epsilon^2T$ in $I$, 
to estimate, for $T$ large enough,
\begin{equation}
\begin{aligned}
I+II &\leq 
\exp\left[\frac{C\epsilon^2\lambda}{2k_{0}(\chi)}\right]\,
\left(1+\exp\left[-\frac{C\epsilon^2\lambda}{4k_{0}(\chi)}\right]\right)\\
&\leq \exp\left[\frac{C\epsilon^2\lambda}{2k_{0}(\chi)}\right]\,\,
\left(1+e^{-\alpha_1 C\epsilon^2T}\right).
\end{aligned}
\end{equation}
This yields
\begin{equation}
\label{Aest1}
\PP\big(A^\n(\chi;\lambda)\big) 
\leq \exp\left[-\frac{C\epsilon^2\lambda^2}{2k_{0}(\chi)T}
+\frac{C\epsilon^2\lambda}{2}\right]\,\exp\left[k_{0}(\chi)e^{-\alpha_1 C\epsilon^2T}\right].
\end{equation}

\medskip\noindent
\underline{$\lambda \leq Ak_{0}(\chi)T$}:
Abbreviate $\alpha_2= 1-\tfrac12 A>0$. Note that $I\leq\exp[-\alpha_2 C\epsilon^2T]$ 
and $II\geq 1$, to estimate
\begin{equation}
I+II \leq II\,\big(1+e^{-\alpha_2 C\epsilon^2T}\big).
\end{equation}  
This yields
\begin{equation}
\label{Aest2}
\PP\big(A^\n(\chi;\lambda)\big) 
\leq \exp\left[-\frac{C\epsilon^2\lambda^2}{2k_{0}(\chi)T}
+\frac{C\epsilon^3\lambda}{2}\right]\,\exp\left[k_{0}(\chi)e^{-\alpha_2 C\epsilon^2T}\right].
\end{equation}

\medskip\noindent
We can combine (\ref{Aest1}) and (\ref{Aest2}) into the single estimate
\begin{equation}
\label{Aest3} 
\PP\big(A^\n(\chi;\lambda)\big) 
\leq \exp\left[-\frac{C'\epsilon^4\lambda^2}{2k_{0}(\chi)T}\right]
\,\exp\left[k_{0}(\chi)e^{-\alpha CT}\right]
\end{equation}
for some $C^\prime=C^\prime \in (0,\infty)$ with $\alpha = \alpha_1 \wedge \alpha_2>0$. To see 
why, put $x=\lambda/k_{0}(\chi)T$, and rewrite the exponent of the first exponential in the 
right-hand side of (\ref{Aest1}) and (\ref{Aest2}) as
\begin{equation}
\tfrac12C\epsilon^2k_{0}(\chi)T\,(-x^2+x), \quad \mbox{ respectively, } \quad
\tfrac12C\epsilon^2k_{0}(\chi)T\,(-x^2+\epsilon x).
\end{equation}
In the first case, since $A>1$, there exists a $B>0$ such that $-x^2+x \leq -Bx^2$ 
for all $x\geq A$. In the second case, there exists a $B>0$ such that $-x^2+\epsilon 
x \leq -B\epsilon^2x^2$ for all $x\geq 2\epsilon$. But (\ref{labdalt}) ensures that
$x\geq 2\epsilon n/k_0(\chi)\geq 2\epsilon$. Thus, we indeed get (\ref{Aest3}) with 
$C'=CB$.

\medskip\noindent
{\bf 4.} 
The same estimates as in (\ref{Aeventest}--\ref{BCeq1}) lead us to
\begin{equation}
\label{BCeq1alt}
\begin{aligned}
\sum_{ {\chi\in\Psi}}
\PP\Big(A^\n(\chi;\lambda)\Big)
\leq \Big(\sum_{l=0}^\infty (2d)^l e^{v_l}\Big)^n.
\end{aligned}
\end{equation}
with
\begin{equation}
v_0 = -\frac{C^\prime\epsilon^4}{2T}b_{0}^2+e^{-\alpha C\epsilon^2T}, \qquad 
v_l = -\frac{C^\prime\epsilon^4}{T}b_{0}b_{l}=-C^\prime\epsilon^4 b_0l,\quad l \geq 1.
\end{equation}
By (\ref{Tdef}) and (\ref{bjdef}), we have
\begin{equation}
\label{BCeq2alt}
\begin{aligned}
\sum_{l=0}^\infty (2d)^l e^{v_l}
&=\exp\left[-\frac{C^\prime\epsilon^4}{2T}b_{0}^2+e^{-\alpha C\epsilon^2T}\right]
+ \frac{2d\exp\left[-C^\prime\epsilon^4 b_{0}\right]}{1-2d\exp\left[-C^\prime\epsilon^4 b_{0}\right]}\\
&= \exp\left[-2C^\prime + e^{-\alpha C T^{2/3}}\right]
+ \frac{2d\exp[-2C^\prime T^{1/6}]}{1-2d\exp[-2C^\prime T^{1/6}]}\\
&< 1
\end{aligned}
\end{equation}
for $T$ large enough, i.e., $\kappa$ small enough. This replaces (\ref{BCeq2}). Therefore the 
analogues of (\ref{ABC}--\ref{BCfact}) hold, i.e., $\xi$-a.s.\ there exists an $n_{0}(\xi)\in\N$ 
such that, for all $n\geq n_{0}(\xi)$,
\begin{equation}
\label{BCfactalt}
\sum_{{j=1}\atop{n_{j}=0}}^{n}\int_{\cI_j} \xi(x_{j}(\chi),s)\,ds 
\leq \lambda = \sum_{l=0}^\infty b_{l}k_{l}(\chi) 
\qquad \forall\,\chi\in\Psi.
\end{equation} 

\medskip\noindent
{\bf 5.} 
The same estimate as in (\ref{whiteakbd}--\ref{muwbd}) now lead us to
\begin{equation}
\mu_0^\w(\kappa) \leq -d\kappa+\frac{1}{2T}
\log\Bigg(\sum_{l=0}^\infty \frac{((2d)^2\kappa T)^l}{l!}\,e^{2\gamma b_{l}} \Bigg).
\end{equation}
Finally, by (\ref{Tdef}) and (\ref{bjdef}),
\begin{equation}
\begin{aligned}
\sum_{l=0}^\infty \frac{((2d)^2\kappa T)^l}{l!}e^{2\gamma b_{l}}
&= e^{4\gamma\epsilon T}
+ \sum_{l=1}^\infty \frac{\big((2d)^2\kappa^{1-2\gamma K}K\log(1/\kappa)\big)^l}{l!}\\
&= e^{4\gamma\epsilon T} + o_\kappa(1),\qquad \kappa\da 0,
\end{aligned}
\end{equation}
which replaces (\ref{sumbdalt}). Hence
\begin{equation}
\label{whitebdfinalt}
\mu_0^\w(\kappa) \leq [1+o_\kappa(1)]\,2\gamma\epsilon, \qquad \kappa\downarrow 0,
\end{equation} 
which proves the claim in (\ref{muwhitealt}).
\qed
\end{proof}


\section{Proof of Theorems \ref{th4}--\ref{th6}}
\label{S3}

The proofs of Theorems~\ref{th4}--\ref{th6} are given in Sections~\ref{S3.1}--\ref{S3.3},
respectively.


\subsection{Proof of Theorem \ref{th4}}
\label{S3.1}

\begin{proof}
For ISRW, SEP and SVM in the weakly catalytic regime, it is known that 
$\lim_{\kappa \to\infty} \lambda_1(\kappa) =\rho\gamma$ (recall Section~\ref{S1.3.2}). 
The claim therefore follows from the fact that 
$\rho\gamma\leq\lambda_0(\kappa)\leq \lambda_1(\kappa)$ for all $\kappa\in[0,\infty)$. 

\noindent
\emph{Note:} The claim extends to non-symmetric voter models (see \cite{garholmai10}, 
Theorems 1.4--1.5).
\qed
\end{proof}


\subsection{Proof of Theorem \ref{th5}}
\label{S3.2}

\begin{proof}
It suffices to prove condition (\ref{E2E4scal}), because we saw in Section~\ref{S2.5} that 
condition (\ref{E2E4scal}) implies (\ref{E1lowbd}), which is stronger than condition 
(\ref{E1lim}). Step 1 deals with $E_2(T)$, step 2 with $\bar E_4(T)$. 

\medskip\noindent
{\bf 1.} Let
\begin{equation}
\label{Cxtdef}
C(x,t) = \EE\big([\xi(0,0)-\rho][\xi(x,t)-\rho]\big), \qquad x\in\Z^d, t \geq 0,
\end{equation}
denote the two-point correlation function of $\xi$. By condition (\ref{staterg}), 
we have
\begin{equation}
\label{E2eq}
\begin{aligned}
E_2(T) &= \int_0^T ds \int_0^T dt\,\,\,\EE\big([\xi(0,s)-\xi(e,s)][\xi(0,t)-\xi(e,t)]\big)\\
&= 4 \int_0^T ds \int_0^{T-s} dt\,\,\,[C(0,t)-C(e,t)].
\end{aligned}
\end{equation} 
In what follows $G_d(x)=\int_0^\infty p_t(x)dt$, $x\in\Z^d$, denotes the Green function of 
simple random walk on $\Z^d$ stepping at rate $1$ starting from $0$, which is finite if and 
only if $d \geq 3$. (Recall from Section~\ref{S1.3.2} that SVM with a simple random walk 
transition kernel is of no interest in $d=1,2$.)

\begin{lemma}
\label{2ptcorlem}
For $x\in\Z^d$ and $t\geq0$,
\begin{equation}
\label{Cxtprop}
C(x,t)=
\begin{cases}
\rho p_{t}(x), &\quad \text{\rm ISRW},\\
\rho(1-\rho)p_{t}(x), &\quad \text{\rm SEP},\\
[\rho(1-\rho)/G_{d}(0)]\int_{0}^\infty p_{t+u}(x) \,du,
&\quad \text{\rm SVM}.
\end{cases}
\end{equation}
\end{lemma}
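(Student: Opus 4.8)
The plan is to derive all three formulas from one common mechanism plus the relevant equilibrium measure. The key observation is that for each of the three dynamics the \emph{one-point function} evolves by the heat equation: writing $S_t$ for the semigroup of the dynamics, one computes from the generator acting on the \emph{linear} functional $\eta\mapsto\eta(x)$ that
\begin{equation*}
\EE_\eta\big(\xi(x,t)\big) = (S_t\,\eta(\cdot))(x) = \sum_{y\in\Z^d} p_t(x-y)\,\eta(y),
\end{equation*}
where $p_t$ is the transition kernel of the rate-one symmetric random walk driving the dynamics. For ISRW this is immediate since particles move independently; for SEP and SVM it holds because the exclusion/voter interaction cancels on a linear functional, i.e.\ $L\,\eta(x)=\sum_y p(x,y)[\eta(y)-\eta(x)]$ by symmetry of $p$. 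Conditioning $\EE_{\nu_\rho}(\xi(0,0)\xi(x,t))$ on $\xi_0$, using this identity, and using $\sum_y p_t(x-y)=1$ together with $\EE_{\nu_\rho}(\xi(y,0))=\rho$, one gets the reduction
\begin{equation*}
C(x,t) = \sum_{y\in\Z^d} p_t(x-y)\,C(y,0), \qquad t\geq 0,
\end{equation*}
so everything comes down to the \emph{equal-time} correlation $C(\cdot,0)$ under $\nu_\rho$, combined with the convolution identity $\sum_y p_t(x-y)p_u(y)=p_{t+u}(x)$.

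For ISRW, $\nu_\rho$ is the Poisson product measure, so $\xi(0,0)$ and $\xi(y,0)$ are independent $\mathrm{Poisson}(\rho)$ variables; hence $C(y,0)=\rho\,\one\{y=0\}$ (the Poisson variance being $\rho$) and the reduction formula gives $C(x,t)=\rho\,p_t(x)$. (The only point needing care here is the integrability required to condition on $\xi_0$ for the infinite particle system, which is guaranteed by the moment bounds of Kesten and Sidoravicius~\cite{kessid03} and is implicit in condition (\ref{lambdabd}).) For SEP, $\nu_\rho$ is the Bernoulli product measure, so $C(y,0)=\rho(1-\rho)\,\one\{y=0\}$ and the reduction gives $C(x,t)=\rho(1-\rho)\,p_t(x)$.

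The SVM case is the substantive one, since its equilibrium is not a product measure, and I expect it to be the main obstacle. I would obtain $C(\cdot,0)$ from the coalescing random walk duality of the voter model: $\EE_\eta\big(\xi(0,t)\xi(y,t)\big)=\EE\big(\rho^{\,|\zeta_t|}\big)$ when $\eta$ is the $\rho$-Bernoulli product measure $\mu_\rho$, where $\zeta_t$ is a pair of coalescing random walks started from $\{0,y\}$ and $|\zeta_t|\in\{1,2\}$ records whether they have coalesced by time $t$. Since $\nu_\rho$ is by construction the weak limit of $\mu_\rho S_t$ and the observable is local, letting $t\to\infty$ yields
\begin{equation*}
\EE_{\nu_\rho}\big(\xi(0,0)\xi(y,0)\big) = \rho\,q(y) + \rho^2\,[1-q(y)], \qquad
q(y)=\PP\big(\text{the two walks ever coalesce}\big).
\end{equation*}
The coalescence probability equals $\PP_0(\text{the walk ever hits }y)=G_d(y)/G_d(0)$ (the difference of the two walks is a transient symmetric walk with the same Green-function ratios, and $G_d(0,y)=\PP_0(\text{hit }y)\,G_d(y,y)$), so $C(y,0)=\rho(1-\rho)\,G_d(y)/G_d(0)$ with $G_d(y)=\int_0^\infty p_u(y)\,du$. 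Substituting into the reduction formula and interchanging the sum with the integral,
\begin{equation*}
C(x,t) = \frac{\rho(1-\rho)}{G_d(0)}\,\sum_{y}p_t(x-y)\int_0^\infty p_u(y)\,du
= \frac{\rho(1-\rho)}{G_d(0)}\int_0^\infty p_{t+u}(x)\,du,
\end{equation*}
which is the claim. Apart from this, there is no hard estimate: the content is the heat-equation reduction, the identification of the three equilibrium correlations, and a convolution. (Alternatively, these three two-point functions are already recorded in G\"artner and den Hollander~\cite{garhol06}, G\"artner, den Hollander and Maillard~\cite{garholmai09}, \cite{garholmai10}, and one could simply cite them, with Liggett~\cite{lig85} for the voter-model equilibrium and its dual.)
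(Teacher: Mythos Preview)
Your proof is correct and takes a somewhat different route from the paper. The paper's proof is essentially a citation: for ISRW it inserts the explicit sum-over-walks representation~(\ref{ISRWrep}) directly into~(\ref{Cxtdef}), while for SEP and SVM it simply invokes the graphical-representation results from~\cite{garholmai07} (Eq.~(1.5.5)) and~\cite{garholmai10} (Lemma~A.1), without reproducing any computation. Your approach is more unified and self-contained: you first reduce all three cases to the equal-time correlation $C(\cdot,0)$ via the fact that the one-point function evolves under the heat semigroup (which you verify from $L\eta(x)=\sum_y p(x,y)[\eta(y)-\eta(x)]$ for all three generators), and then identify $C(\cdot,0)$ from the respective equilibrium---product Poisson/Bernoulli for ISRW/SEP, and coalescing-walk duality for SVM. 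This has the merit of making transparent \emph{why} the three formulas have the common structure $C(x,t)=(p_t*C(\cdot,0))(x)$ and of treating the models on an equal footing; the paper trades this for brevity by pointing to the literature. Your SVM derivation via duality and the hitting-probability identity $q(y)=G_d(y)/G_d(0)$ is precisely what is hidden behind the citation to~\cite{garholmai10}, so in substance the two proofs agree there.
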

\begin{proof}
For ISRW, we have 
\begin{equation}
\label{ISRWrep}
\xi(x,t) = \sum_{y\in\Z^d}\sum_{j=1}^{N^{y}}\delta_{x}\big(Y_{j}^y(t)\big),
\qquad x\in\Z^d, t\geq 0, 
\end{equation}
where $\{N^{y}\colon y\in\Z^d\}$ are i.i.d.\ Poisson random variables with mean 
$\rho\in(0,\infty)$, and $\{Y_{j}^y\colon y\in\Z^d, 1\leq j\leq N^{y}\}$ with
$Y_j^y=(Y_j^y(t))_{t\geq 0}$ is a collection of independent simple random walks 
with jump rate $1$ ($Y_{j}^y$ is the $j$-th random walk starting from $y\in\Z^d$). 
Inserting (\ref{ISRWrep}) into (\ref{Cxtdef}), we get the first line in (\ref{Cxtprop}). 
For SEP and SVM, the claim follows via the graphical representation (see G\"artner, 
den Hollander and Maillard~\cite{garholmai07}, Eq.\ (1.5.5), and \cite{garholmai10}, 
Lemma A.1, respectively). Recall from the remark made at the end of Section~\ref{S1.1}
that SVM requires the random walk transition kernel to be \emph{transient}.
\qed 
\end{proof}

For ISRW, (\ref{E2eq}--\ref{Cxtprop}) yield
\begin{equation}
\frac{1}{T}\,E_2(T) 
= 4\rho \int_0^T dt\,\frac{T-t}{T}\,[p_t(0)-p_t(e)],
\end{equation}
where we note that $p_t(0)-p_t(e) \geq 0$ by the symmetry of the random walk
transition kernel. Hence, by monotone convergence, 
\begin{equation}
\lim_{T\to\infty} \frac1T E_{2}(T) 
= 4\rho \int_0^\infty dt\,[p_t(0)-p_t(e)],
\end{equation} 
which is a number in $(0,\infty)$ (see Spitzer~\cite{spi64}, Sections 24 and 29). 
For SEP, the same computation applies with $\rho$ replaced by $\rho(1-\rho)$. For 
SVM, (\ref{E2eq}--\ref{Cxtprop}) yield
\begin{equation}
\frac{1}{T}\,E_2(T) 
= 4\frac{\rho(1-\rho)}{G_d(0)} \int_0^\infty du\,
\left[\frac{u(2T-u)}{2T}\,\one_{\{u \leq T\}} 
+ \tfrac12T\,\one_{\{u \geq T\}}\right]\,[p_u(0)-p_u(e)].
\end{equation}
Hence, by monotone convergence (estimate $\tfrac12 T\leq \tfrac12 u$ in the second 
term of the integrand),
\begin{equation}
\lim_{T\to\infty} \frac1T E_{2}(T)
= 4\frac{\rho(1-\rho)}{G_d(0)} \int_0^\infty du\,\,u\,[p_u(0)-p_u(e)],
\end{equation} 
which again is a number in $(0,\infty)$ (see Spitzer~\cite{spi64}, Section 24).

\medskip\noindent
{\bf 2.} Let 
\begin{equation}
\begin{aligned}
&C(x,t;y,u;z,v)=\EE\big([\xi(0,0)-\rho][\xi(x,t)-\rho][\xi(y,u)-\rho][\xi(z,v)-\rho]\big),\\
&x,y,z\in\Z^d,\, 0\leq t\leq u\leq v,
\end{aligned}
\end{equation} 
denote the four-point correlation function of $\xi$. Then, by condition (\ref{staterg}),
\begin{equation}
\label{Ebar4}
\bar{E}_4(T) = 4! \int_0^T ds \int_0^{T-s} dt \int_t^{T-s} du \int_u^{T-s} dv\,\,\,
C(0,t;0,u;0,v).
\end{equation} 
To prove the second part of (\ref{E2E4scal}), we must estimate $C(0,t;0,u;0,v)$.
For ISRW, this can be done by using (\ref{ISRWrep}), for SEP by using the Markov property 
and the graphical representation. In both cases the computations are long but straightforward, 
with leading terms of the form 
\begin{equation}
M p_{a}(0,0)p_{b}(0,0)
\end{equation}
with $a,b$ linear in $t$, $u$ or $v$, and $M<\infty$. Each of these leading terms, after 
being integrated as in (\ref{Ebar4}), can be bounded from above by a term of order 
$T^2$, and hence $\limsup_{T\to\infty} \bar{E}_4(T)/T^2<\infty$. The details are 
left to the reader.
\qed
\end{proof}

\medskip\noindent
\emph{Note:} We expect the second part of condition (\ref{E2E4scal}) to hold also for 
SVM. However, the graphical representation, which is based on coalescing random walks, 
seems too cumbersome to carry through the computations.


\subsection{Proof of Theorem \ref{th6}}
\label{S3.3}

\begin{proof}
For ISRW in the strongly catalytic regime, we know that $\lambda_1(\kappa)=\infty$ 
for all $\kappa\in [0,\infty)$ (recall Fig.~\ref{fig-lambda03}), while $\lambda_0(\kappa)
<\infty$ for all $\kappa\in [0,\infty)$ (by Kesten and Sidoravicius~\cite{kessid03}, 
Theorem 2).
\qed
\end{proof}

\begin{acknowledgement}
GM is grateful to CNRS for financial support and to EURANDOM for hospitality.
We thank Dirk Erhard for his comments on an earlier draft of the manuscript.
\end{acknowledgement}


%
%

\begin{thebibliography}{99.}%
%
%
%
%
%
%

\bibitem{berton09}
Berger Q., Toninelli F.L.: 
On the critical point of the random walk pinning model in dimension $d=3$.
Electronic J.\ Probab.\ \textbf{15}, 654--683 (2010).

\bibitem{birgrehol08}
Birkner M., Greven A., den Hollander F.: 
Collision local time of transient random walks and intermediate phases in 
interacting stochastic systems.
Electronic J.\ Probab.\ \textbf{16}, 552--586 (2011).

\bibitem{birsun10}
Birkner M., Sun R.:
Annealed vs quenched critical points for a random walk pinning model.
Ann.\ Inst.\ H.~Poincar\'e Probab.\ Statist.\ \textbf{46}, 414--441 (2010).

\bibitem{birsun09}
Birkner M., Sun R.:
Disorder relevance for the random walk pinning model in dimension $3$.
Ann.\ Inst.\ H.~Poincar\'e Probab.\ Statist.\ \textbf{47}, 259--293 (2011). 

\bibitem{carkormol01}
Carmona R.A., Koralov L., Molchanov S.A.:
Asymptotics for the almost-sure Lyapunov exponent for the solution of the
parabolic Anderson problem.
Random Oper. Stochastic Equations \textbf{9}, 77--86 (2001).

\bibitem{carmol94}
Carmona R.A., Molchanov S.A.:
\emph{Parabolic Anderson Problem and Intermittency}.
AMS Memoir \textbf{518}, American Mathematical Society, Providence RI (1994).

\bibitem{carmolvi96}
Carmona R.A., Molchanov S.A., Viens F.:
Sharp upper bound on the almost-sure exponential behavior of a stochastic
partial differential equation. 
Random Oper.\ Stochastic Equations \textbf{4}, 43--49 (1996).

\bibitem{casgunmai11}
Castell F., G\"un O., Maillard G.:
Parabolic Anderson model with a finite number of moving catalysts.
In this volume.

\bibitem{cramoshi02}
Cranston M., Mountford T.S., Shiga T.:
Lyapunov exponents for the parabolic Anderson model.
Acta Math.\ Univ.\ Comeniane \textbf{71}, 163--188 (2002).

\bibitem{cramoshi05}
Cranston M., Mountford T.S., Shiga T.:
Lyapunov exponents for the parabolic Anderson model with L\'evy noise.
Probab.\ Theory Relat.\ Fields \textbf{132}, 321--355 (2005).

\bibitem{dregarramsun11}
Drewitz A., G\"artner J., Ram\'{\i}rez A., Sun R.:
Survival probability of a random walk among a Poisson system of moving traps.
In this volume.

\bibitem{garhey06}
G\"artner J., Heydenreich M.:
Annealed asymptotics for the parabolic Anderson model with a moving catalyst.
Stoch.\ Proc.\ Appl.\ \textbf{116}, 1511--1529 (2006).
 
\bibitem{garhol06}
G\"artner J., den Hollander F.:
Intermittency in a catalytic random medium.
Ann.\ Probab.\ \textbf{34}, 2219--2287 (2006).

\bibitem{garholmai07}
G\"artner J., den Hollander F., Maillard G.:
Intermittency on catalysts: symmetric exclusion.
Electronic J. Probab. \textbf{12}, 516--573 (2007).

\bibitem{garholmai08HvW}
G\"artner J., den Hollander F., Maillard G.:
Intermittency on catalysts. 
In: Blath J., M\"orters P., Scheutzow M. (eds.),
\emph{Trends in Stochastic Analysis},
London Mathematical Society Lecture Note Series \textbf{353},
pp. 235--248. Cambridge University Press, Cambridge (2009).

\bibitem{garholmai09}
G\"artner J., den Hollander F., Maillard G.:
Intermittency on catalysts: three-dimensional simple symmetric exclusion.
Electr.\ J.\ Prob.\ \textbf{72}, 2091--2129 (2009).

\bibitem{garholmai10}
G\"artner J., den Hollander F., Maillard G.:
Intermittency on catalysts: voter model.
Ann.\ Probab.\ \textbf{38}, 2066--2102 (2010).

\bibitem{grehol07}
Greven A., den Hollander F.:
Phase transition for the long-time behavior of interacting diffusions.
Ann.\ Probab.\ \textbf{35}, 1250--1306 (2007).

\bibitem{kessid03}
Kesten H., Sidoravicius V.:
Branching random walk with catalysts.
Electr.\ J.\ Prob.\ \textbf{8}, 1--51 (2003).

\bibitem{kimvieviz08}
Kim H-Y., Viens F., Vizcarra A.:
Lyapunov exponents for stochastic Anderson models with non-gaussian noise.
Stoch.\ Dyn.\ \textbf{8}, 451--473 (2008).

\bibitem{kin76}
Kingman J.F.C.:
Subadditive processes.
In: Lecture Notes in Mathematics \textbf{539}, 
pp. 167--223. Springer, Berlin (1976).

\bibitem{kiplan99}
Kipnis C., Landim C.:
\emph{Scaling Limits of Interacting Particle Systems}. 
Grundlehren der Mathematischen Wissenschaften 
\textbf{320}, Springer, Berlin (1999).

\bibitem{lig85}
Liggett T.M.:  
\emph{Interacting Particle Systems}.
Grundlehren der Mathematischen Wissenschaften 
\textbf{276}, Springer, New York (1985).

\bibitem{maimousch11}
Maillard G., Mountford T., Sch\"opfer S.:
Parabolic Anderson model with voter catalysts: dichotomy in the behavior of
Lyapunov exponents.
In this volume.

\bibitem{schwol}
Schnitzler A., Wolff T.:
Precise asymptotics for the parabolic Anderson model with a moving 
catalyst or trap.
In this volume.

\bibitem{spi64}
Spitzer F.:
\emph{Principles of Random Walk},
Van Nostrand, Princeton, 1964.

\end{thebibliography}
%

\end{document}